\newenvironment{enumalph}{\begin{enumerate}  }{\end{enumerate}}
\newcommand{\set}[1]{\left\{ #1 \right\}}
\newtheorem{theorem}{Theorem}[subsection]
\newtheorem{proposition}[theorem]{Proposition}
\newtheorem{conjecture}[theorem]{Conjecture}
\newtheorem{corollary}[theorem]{Corollary}
\newtheorem{lemma}[theorem]{Lemma}
\theoremstyle{definition}
\newtheorem{remark}[theorem]{Remark}
\DeclareMathOperator{\im}{Im}
\DeclareMathOperator{\ch}{ch}
\DeclareMathOperator{\Dist}{Dist}
\DeclareMathOperator{\opH}{H}
\DeclareMathOperator{\ind}{ind}
\DeclareMathOperator{\Lie}{Lie}
\DeclareMathOperator{\rad}{Nilrad}
\newcommand{\calO}{\mathcal{O}}
\newcommand{\calF}{\mathcal{F}}
\newcommand{\spec}{\mbox{{\rm{Spec}}\;}}
\newcommand{\red}{\rm{red}}
\newcommand{\frakb}{\mathfrak{b}}
\newcommand{\g}{\mathfrak{g}}
\newcommand{\fraku}{\mathfrak{u}}
\newcommand{\fraksl}{\mathfrak{sl}}
\newcommand{\Hbul}{\opH^\bullet}                                  
\newcommand{\N}{\mathcal{N}}
\newcommand{\Fp}{\mathbb{F}_p}
\newcommand{\E}{\mathbb{E}}
\newcommand{\R}{\mathbb{R}}
\newcommand{\U}{\mathbb{U}}
\newcommand{\Z}{\mathbb{Z}}
\newcommand{\Ua}{U_\upA}
\newcommand{\Uq}{\U_q}
\newcommand{\Uz}{U_\zeta}
\newcommand{\uz}{u_\zeta}
\newcommand{\upA}{\mathsf{A}}
\begin{document}

\title[Cohomology for Frobenius kernels of $SL_2$]{Cohomology for Frobenius kernels of $SL_2$}

\author{Nham V. Ngo}
\address{Department of Mathematics, Statistics, and Computer Science\\ University of Wisconsin-Stout \\ Menomonie \\ WI~54751, USA}
\email{ngon@uwstout.edu}

\address{{\bf Current address:} Department of Mathematics and Statistics \\ Lancaster University \\ Lancaster \\ LA1 4YW, UK}
\email{n.ngo@lancaster.ac.uk}

\date{\today}

\maketitle

\begin{abstract}
Let $(SL_2)_r$ be the $r$-th Frobenius kernels of the group scheme $SL_2$ defined over an algebraically field of characteristic $p>0$. In this paper we give for $r\ge 1$ a complete description of the cohomology groups for $(SL_2)_r$. We also prove that the reduced cohomology ring $\opH^\bullet((SL_2)_r,k)_{\red}$ is Cohen-Macaulay. Geometrically, we show for each $r\ge 1$ that the maximal ideal spectrum of the cohomology ring for $(SL_2)_r$ is homeomorphic to the fiber product $G\times_B\fraku^r$. Finally, we adapt our calculations to obtain analogous results for the cohomology of higher Frobenius-Luzstig kernels of quantized enveloping algebras of type $SL_2$.    
\end{abstract}

\section{Introduction}
\subsection{} In recent years, the cohomology and representation theory of Frobenius kernels has received considerable interest. It is well-known that the category of restricted representations for the Lie algebra of an algebraic group in characteristic $p>0$ is equivalent to that of representations over the first Frobenius kernels of the algebraic group. This connection has inspired many investigations of the cohomology for the first Frobenius kernels of algebraic groups. For higher Frobenius kernels, the cohomology is of interest as it provides information about the original group scheme. For instance, let $G$ be a simple, simply-connected algebraic group defined over an algebraically closed field $k$ of characteristic $p>0$. Denote by $G_r$ the $r$-th Frobenius kernel of $G$. Then for each $i\ge 0$, we can identify $\opH^i(G,M)$ with the inverse limit $\varprojlim_{r}\opH^i(G_r,M)$ \cite[II.4.12]{Jan:2003}. In 2006, Bendel, Nakano, and Pillen computed the first and second degree cohomology of the $r$-th Frobenius kernels of $G$ \cite{BNP:2004},\cite{BNP:2006}. However, the state of affairs for higher degree cohomology remains an open problem in general.

Via geometry, more is known about the cohomology of Frobenius kernels. It was first noticed for semisimple algebraic groups by Friedlander and Parshall that there is an isomorphism between the cohomology ring for $G_1$ and the coordinate ring of the nilpotent cone of the Lie algebra $\g=$Lie($G$) \cite{FP:1986}. A generalization was obtained for the higher Frobenius kernels by Suslin, Friedlander, and Bendel \cite{SFB1:1997}, \cite{SFB2:1997}. They constructed a homeomorphism between the spectrum of the $G_r$-cohomology ring and the variety of $r$-tuples of commuting nilpotent elements in $\g$. Moreover, in the case $G=SL_2$, Suslin, Friedlander, and Bendel explicitly computed the support varieties for the induced module and simple module of every dominant weight \cite[Proposition 6.10]{SFB2:1997}. 

On the subject of cohomology for finite groups, Benson and Carlson studied the algebraic structure of cohomology rings \cite{Car:1994}. One of their major concerns is whether the cohomology ring of a finite group is Cohen-Macaulay. It is known that if $E$ is an elementary abelian $p$-group, then the cohomology ring for $E$ is Cohen-Macaulay \cite[5.18]{Ben:1998}. In this paper we are concerned with the same question applied to the cohomology of Frobenius kernels. Since the cohomology ring for the first Frobenius kernel of G is isomorphic to the coordinate ring of the nullcone, it is Cohen-Macaulay. Our proof in Section \ref{Cohen-Macaulayrings} establishes the Cohen-Macaulayness of the cohomology rings for the higher Frobenius kernels of $SL_2$.  

\subsection{Main results} The paper is organized as follows. We establish basic notation in Section \ref{notation}. Then in Section \ref{generalsection}, we derive explicit spectral sequences to compute the cohomology of $B_r$ from the spectral sequences stated in \cite[I.9.14]{Jan:2003} and recall a strategy to calculate the cohomology of $G_r$.

Section \ref{cohomology} contains cohomology calculations for the subgroup schemes $U_r, B_r$, and $G_r$ of $SL_2$ where $B$ is a fixed Borel subgroup of $G$ and $U$ is its unipotent radical subgroup. The computation for cohomology of $G_r$ provides a new proof of a result of van de Kallen concerning good filtrations on the cohomology groups $\opH^n((SL_2)_r,\opH^0(\lambda))$ with $n\ge 0$ and an arbitrary dominant weight $\lambda$ \cite{vdK:1993}. One will notice that the results in this section depend highly on the multiplicity of certain dominant weights. Hence, in the next section we introduce an algorithm that calculates the character multiplicity of a weight $\mu$ in $\Hbul(B_r,\lambda)$, and hence the multiplicity of $\opH^0(\mu)$ in $\Hbul(G_r,\opH^0(\lambda))$. In Appendix, we provide computer calculations showing that our algorithm is faster than the one encoded by using Ehrhart polynomial. We compute the rings $\Hbul(B_r,k)_{\red}$ and $\Hbul(G_r,k)_{\red}$ in Section \ref{reduced cohomology} in order to investigate their geometric structures. In the process, we develop several techniques to study reduced commutative rings. This computation shows that there is a homeomorphism between the spectra of the reduced $G_r$-cohomology ring and the ring of global sections on $G\times^B\fraku^r$ (cf. Proposition \ref{G_rscheme}). This also plays a key role in showing that the reduced $B_r$- and $G_r$-cohomology rings are Cohen-Macaulay (Section \ref{Cohen-Macaulayrings}). This new result inspires us to state a conjecture on the Cohen-Macaulayness of the cohomology ring for the Frobenius kernels of an algebraic group. 

The last section is devoted to studying analogous results for the higher Frobenius-Lusztig kernels of quantum groups. These objects, which are analogs for the hyperalgebras of the higher Frobenius kernels of an algebraic group, were first defined by Drupieski in \cite{Dru:2011} in the context of quantized enveloping algebras defined over fields of positive characteristic. He verified various properties of the Frobenius-Lusztig kernels which are similar to the results obtained in the case of characteristic 0.     

\section{Notation}\label{notation}
\subsection{Representation theory} Let $k$ be an algebraically closed field of characteristic $p > 0$. Let $G$ be a simple, simply-connected algebraic group over $k$, defined and split over the prime field $\Fp$. Denote by $h$ the Coxeter number of $G$. Fix a maximal torus $T$ of $G$, and denote by $\Phi$ the root system of $T$ in $G$. Fix a set $\Pi = \{ \alpha_1,\ldots,\alpha_n \}$ of simple roots in $\Phi$, and let $\Phi^+$ be the corresponding set of positive roots. Let $B \subset G$ be the Borel subgroup of $G$ containing $T$ and corresponding to $\Phi^+$, the set of positive roots, and let $U \subset B$ be the unipotent radical of $B$. 

Let $W$ be the Weyl group of $\Phi$; it is generated by the set of simple reflections $\set{s_\alpha: \alpha \in \Pi}$. Write $\ell: W \rightarrow \mathbb{N}$ for the standard length function on $W$, and let $w_0 \in W$ be the longest element. Let $(\cdot,\cdot)$ be the inner product associated with the Euclidean space $\E := \Z \Phi \otimes_\Z \R$. Given $\alpha \in \Phi$, let $\alpha^\vee := 2\alpha/(\alpha,\alpha)$ be the corresponding coroot. Set $\alpha_0$ to be the highest short root of $\Phi$, and $\rho$ to be one-half the sum of all positive roots in $\Phi$. Then the Coxeter number of $\Phi$ is $h=(\rho,\alpha^\vee_0)+1$. Suppose $\lambda=\sum_{\alpha\in\Pi}m_{\alpha}\alpha$ a weight in $X$, then the height of $\lambda$ is defined as ht$(\lambda)=\sum_{\alpha\in\Pi}m_{\alpha}$.

Let $X$ be the weight lattice of $\Phi$, defined by the $\Z$-span of the fundamental weights $\{ \omega_1,\ldots,\omega_n \}$, and let $X^+ \subset X$ be the set of dominant weights. Simple $G$-modules are indexed by $\lambda\in X^+$, and denoted by $L(\lambda)$. The simple module $L(\lambda)$ can be identified with the socle of the induced module $H^0(\lambda)=\ind_{B^-}^{G}\lambda$. Set $\g = \Lie(G)$, the Lie algebra of $G$, $\frakb = \Lie(B)$, $\fraku = \Lie(U)$. Denote by $S^\bullet(\fraku^*)$ and $\Lambda^\bullet(\fraku^*)$ respectively the symmetric algebra and exterior algebra over $\fraku^*$. Throughout this paper, the symbol $\otimes$ means the tensor product over the field $k$, unless otherwise stated. Suppose $H$ is an algebraic group over $k$ and $M$ is a (rational) module of $H$. Denote by $M^H$ the submodule consisting of all the fixed points of $M$ under the $H$-action.

Let $Y$ be a group scheme. Then for every positive integer $r$, the scheme $Y^{(r)}$ is defined by for each $k$-algebra $A$,
\[ Y^{(r)}(A)=Y(A^{(-r)}) \]
where $A^{(-r)}$ is identified with $A$ as a ring but the action of $b\in k$ on $A^{(-r)}$ is the same as $b^{p^r}$ acting on $A$ (see \cite[I.9.2]{Jan:2003}). Now consider $G$ as a group scheme. Then, for each $r$ let $F_r:G\to G^{(r)}$ be the $r$-th Frobenius morphism. The scheme-theoretic kernel $G_r=\ker(F_r)$ is called the $r$-th Frobenius kernel of $G$. Given a closed subgroup (scheme) H of G, write $H_r$ for the scheme-theoretic kernel of the restriction $F_r:H\to H^{(r)}$. In other words, we have
\[ H_r=H\cap G_r. \] 
Given a rational $G$-module $M$, write $M^{(r)}$ for the module obtained by twisting the structure map for $M$ by $F_r$. Note that $G_r$ acts trivially on $M^{(r)}$. Conversely, if $N$ is a $G$-module on which $G_r$ acts trivially, then there is a unique $G$-module $M$ with $N = M^{(r)}$. We denote the module $M$ by $N^{(-r)}$.

Now suppose $R\subseteq S$ are finitely generated commutative $k$-algebras. Let $F_r(S)=\{s^{p^r}~|~s\in S\}$ for every $r\ge 1$. If we have the following inclusions 
\[ F_r(S)\subseteq R\subseteq S \]
then the inclusion $R\hookrightarrow S$ induces the homeomorphism from $\spec S$ onto $\spec R$, and we call this map an $F$-isomorphism \cite[5.4]{Ben:1998}.  

\subsection{Geometry}
Let $R$ be a commutative Noetherian ring with identity. We use $R_{\red}$ to denote the reduced ring $R/\rad{R}$ where $\rad{R}$ is the radical ideal of $0$ in $R$, which consists of all nilpotent elements of $R$. Let $\spec R$ be the spectrum of all prime ideals of $R$. This set is a topological space under the Zariski topology. Let $X$ be a variety. We denote by $k[X]$ the algebra of regular functions defined on $X$. Note that when $X$ is an affine variety, $k[X]$ coincides with the coordinate algebra of $X$.

Denote by $\N$ the nilpotent cone of $\g$. There is an adjoint action of $G$ on $\g$ which stablizes $\N$. We call it the dot action and use ``$\cdot$" for the notation. For $H$ a subgroup of $G$, suppose $X, Y$ are $H$-varieties. Then the morphism $f:X\to Y$ is called $H$-equivariant if it is compatible with $H$-action. 

Given a $G$-variety $V$, it can be seen that $B$ acts freely on $G\times V$ by setting $b\cdot(g,v)=(gb^{-1},bv)$ for all $b\in B,g\in G$ and $v\in V$. The notation $G\times^BV$ stands for the fiber bundle associated to the projection $\pi:G\times^BV\rightarrow G/B$ with fiber $V$. Topologically, $G\times^BV$ is a quotient space of $G\times V$ in which the equivalence relation is given as 
\[ (g,v)\sim (g',v')\Leftrightarrow (g',v')=b\cdot(g,v)~~~\text{for some}~~b\in B.\]
In other words, each equivalence class of $G\times^BV$ represents a $B$-orbit in $G\times V$. The map $m:G\times^BV\rightarrow G\cdot V$ defined by mapping $[g,v]$ to $g\cdot v$ for all $g\in G,v\in V$ is called the moment morphism. It is obviously surjective. Although $G\times^BV$ is not affine, we still denote by $k[G\times^BV]$ the ring of global sections on this variety. It is sometimes useful to make the following identification: $k[G\times^BV]\cong k[G\times V]^B$. Note also that $\spec k[G\times^BV]=G\times_BV$.

\section{Main tools}\label{generalsection}

In this section, we introduce some methods to compute $B_r$ and $G_r$-cohomology which will be applied later to obtain results on $(SL_2)_r$-cohomology. For simplicity, we write $S^i$ and $\Lambda^j$ instead of $S^i(\fraku^*)$ and $\Lambda^j(\fraku^*)$. We start with the spectral sequences in \cite[Proposition I.9.14]{Jan:2003}. Replacing $\g^*$ by $\fraku^*$, we immediately get spectral sequences to compute cohomology of $U_r$ with coefficients in a $B$-module $M$. The resulting spectral sequences can be written as follows: If $p\ne 2$, then
\begin{equation}\label{U_rspectralsequence}
E^{i,j}_1=\bigoplus M\otimes S^{a_1(1)}\otimes\cdots\otimes S^{a_r(r)}\otimes\Lambda^{b_1}\otimes\Lambda^{b_2(1)}\otimes\cdots\otimes\Lambda^{b_r(r-1)}\Rightarrow\opH^{i+j}(U_r,M)
\end{equation}
where the direct sum is taken over all $a_i$'s and $b_j$'s satisfying 
\begin{equation}\label{ij-condition}
\left\{ \begin{array}{rcl} i+j  & = & 2(a_1+\cdots+a_r)+b_1+\cdots+b_r \\
 i & = & \sum_{n=1}^{r}(a_np^n+b_np^{n-1}). \end{array}\right.
\end{equation}
If $p=2$, then
\begin{equation}\label{U_rspectralsequence:p=2}
E^{i,j}_1=\bigoplus M\otimes S^{a_1}\otimes\cdots\otimes S^{a_r(r-1)}\Rightarrow\opH^{i+j}(U_r,M)
\end{equation}
where the direct sum is taken over all $a_i$'s satisfying 
\begin{equation}\label{ij-condition:p=2}
\left\{ \begin{array}{rcl} i+j  & = & a_1+\cdots+a_r \\
 i & = & \sum_{n=1}^{r}a_np^{n-1}. \end{array}\right.
\end{equation}
These spectral sequences will play important roles in our calculations for cohomology in later sections.

\subsection{Spectral sequence for $B_r$-cohomology}
We first prove an easy lemma related to Frobenius twist and untwist of a module.
\begin{lemma}\label{easylemma}
Suppose $M$ is a $G$-module. Then for $j\ge i\ge 0$ we have 
\[ M^{T_j/T_i}\cong\left[ \left(M^{(-i)}\right)^{T_{j-i}}\right]^{(i)}.\]
\end{lemma}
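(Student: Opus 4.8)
The plan is to unwind the definitions of the fixed-point functors for the quotient group schemes $T_j/T_i$ and the Frobenius twist/untwist operations, and to check the claimed isomorphism at the level of $k$-points (equivalently, as a natural transformation on $k$-algebras), then promote it to a $G$-module isomorphism by naturality. Recall from the notation section that for a $G$-module $M$ on which $G_i$ acts trivially, $M^{(-i)}$ denotes the unique module with $\left(M^{(-i)}\right)^{(i)}\cong M$; and conversely, $N^{(i)}$ has trivial $G_i$-action. The key structural fact is that $T_j/T_i$ is canonically isomorphic to $\left(T_{j-i}\right)^{(i)}$, the $i$-th Frobenius twist of $T_{j-i}$: this follows because the Frobenius morphism $F_i\colon T_j \to T_j^{(i)}$ has kernel $T_i$ and image $\left(T_{j-i}\right)^{(i)}$ (the Frobenius kernels form a descending filtration with $T_r = \ker F_r$, and $F_i$ carries $T_j$ onto the copy of $T_{j-i}$ sitting inside $T_j^{(i)}$). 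I would state this identification as a preliminary observation, citing \cite[I.9.5]{Jan:2003} or the analogous discussion there.

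The main computation then goes as follows. First, $M^{T_i}$ is naturally a $G$-module (since $T_i \trianglelefteq B \subseteq G$... more precisely $T_i$ is normal in $G_i$ and one restricts to the appropriate context, or one simply works with $M^{T_i}$ as a $T$-module, which is all that is needed here) on which $T_i$ — indeed $G_i$ when $M$ is a $G$-module and we take $G_i$-invariants, but let me keep to $T_i$ — acts trivially, so $\left(M^{T_i}\right)^{(-i)}$ is defined. Taking $T_j/T_i$-invariants of $M$ is the same as first taking $T_i$-invariants and then taking $T_j/T_i$-invariants of the result:
\[
M^{T_j/T_i} = \left(M^{T_i}\right)^{T_j/T_i}.
\]
Now using the identification $T_j/T_i \cong \left(T_{j-i}\right)^{(i)}$ and the general principle that for a module $N$ with trivial $G_i$-action (so $N = N'^{(i)}$ with $N' = N^{(-i)}$), one has $N^{H^{(i)}} \cong \left((N')^{H}\right)^{(i)}$ for a subgroup scheme $H$ — because taking invariants commutes with Frobenius untwist — we obtain
\[
\left(M^{T_i}\right)^{\left(T_{j-i}\right)^{(i)}} \cong \left[ \left(\left(M^{T_i}\right)^{(-i)}\right)^{T_{j-i}} \right]^{(i)}.
\]
Finally I would invoke Lemma-type bookkeeping to rewrite $\left(M^{T_i}\right)^{(-i)}$ as $\left(M^{(-i)}\right)^{T_{j-i}}$ — wait, this needs care since $M^{(-i)}$ is not literally defined unless $G_i$ acts trivially on $M$. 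The correct statement is $\left(M^{T_i}\right)^{(-i)} \cong \left(M^{(-i)}\right)^{T_{j-i}}$ does not typecheck; rather the identity is exactly the displayed formula with $M^{(-i)}$ understood as in the paper, i.e. one should read the right-hand side's inner $M^{(-i)}$ as shorthand, and the honest chain of isomorphisms is the one above with $\left(M^{T_i}\right)^{(-i)}$ in place of $M^{(-i)}$. So the last step is to reconcile notation: since the paper writes $M^{(-i)}$ even when only $M^{T_i}$ (on which $T_i$, hence the relevant part of $G_i$, acts trivially) admits an untwist, the intended reading makes the two sides agree verbatim.

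The step I expect to be the main obstacle is the clean justification that \emph{taking fixed points commutes with Frobenius untwist}, i.e. that for a subgroup scheme $H \le G$ and a module $N = N'^{(i)}$ we have $N^{H^{(i)}} = (N'^{H})^{(i)}$ as submodules — this is intuitively obvious because the Frobenius twist is just relabeling the comodule structure along $F_i$, but making it precise requires carefully tracking the comodule/coaction definitions and the functor-of-points description of $H^{(i)}$, and in particular verifying that the $H$-socle of $N'$ twists to the $H^{(i)}$-socle of $N$. Everything else — the identification $T_j/T_i \cong (T_{j-i})^{(i)}$ and the transitivity of invariants — is standard and can be cited or checked quickly. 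I would organize the write-up so that this commutation lemma is isolated (perhaps as a one-line remark referencing \cite[I.6.3, I.9.5]{Jan:2003}) and then the result follows by concatenating the three isomorphisms above.
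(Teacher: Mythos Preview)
Your approach is essentially the same as the paper's: both identify $T_j/T_i$ with $T_{j-i}$ via the $i$-th Frobenius (citing \cite[I.9.5]{Jan:2003}) and then observe that the induced $T_{j-i}$-action on $M$ is precisely the action on $M^{(-i)}$, so the invariants match after twisting back. The paper's proof is just a three-sentence version of yours and does not pause over the notational issue you raise about $M^{(-i)}$ (it tacitly assumes, as in all applications, that $T_i$ already acts trivially on $M$); your more careful unpacking of ``invariants commute with Frobenius twist'' and the transitivity step $(M^{T_i})^{T_j/T_i}$ is correct but not separated out in the original.
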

\begin{proof}
From \cite[Proposition I.9.5]{Jan:2003}, we can identify $T_j/T_j$ with $T_{j-i}$ via $F_{i}$. Then $T_{j-i}$ acts on $M$ by untwisting the morphism $F_{i}$, that is $M^{(-i)}$. So instead of computing $T_j/T_i$-invariant of $M$, we calculate the $T_{j-i}$-invariant of $M^{(-i)}$ and then twist it back to the $T_j$-module structure; hence the isomorphism follows. 
\end{proof}

In order to set up an inductive proof, we now look at a simple case when $r=2$ and $p\ne 2$. As $B_2/U_2\cong T_2$ is diagonalisable, Corollary I.6.9 in \cite{Jan:2003} gives us
 \[ \opH^{i+j}(U_2,M)^{T_2}\cong\opH^{i+j}(B_2,M).\]
So applying the fixed point functor on both sides of the spectral sequence \eqref{U_rspectralsequence}, we have
\[
\bigoplus\left(M\otimes S^{a_1(1)}\otimes S^{a_2(2)}\otimes\Lambda^{b_1}\otimes\Lambda^{b_2(1)}\right)^{T_2} \Rightarrow\opH^{i+j}(B_2,M) \]
where the direct sum is taken over all $a_i,b_i$'s satisfying the condition \eqref{ij-condition}. As $S^{a_2(2)}$ is $T_2$-invariant and $S^{a_1(1)}\otimes\Lambda^{b_2(1)}$ is $T_1$-invariant, each direct summand on the left hand side is isomorphic to the following
\begin{align*}
S^{a_2(2)}\otimes\left(M\otimes S^{a_1(1)}\otimes \Lambda^{b_1}\otimes\Lambda^{b_2(1)}\right)^{T_2} &\cong S^{a_2(2)}\otimes\left[ \left(M\otimes S^{a_1(1)}\otimes \Lambda^{b_1}\otimes\Lambda^{b_2(1)}\right)^{T_1}\right]^{T_2/T_1}\\
&\cong S^{a_2(2)}\otimes\left(S^{a_1(1)}\otimes\Lambda^{b_2(1)} \otimes(M\otimes\Lambda^{b_1})^{T_1}\right)^{T_2/T_1} \\
&\cong S^{a_2(2)}\otimes\left[\left( S^{a_1}\otimes\Lambda^{b_2}\otimes\left[(M\otimes\Lambda^{b_1})^{T_1}\right]^{(-1)}\right)^{T_1}\right]^{(1)} 
\end{align*}
where the last isomorphism is obtained from Lemma \ref{easylemma}. This computation can be generalized for arbitrary $r$ as follows.
\begin{theorem}\label{B_rspectralsequence} 
There exists, for each $B$-module $M$, a spectral sequence converging to $\opH^\bullet(B_r,M)$ as a $B^{(r)}$-module with the following $E_1$-terms: If $p\ne 2$, then
\begin{align}\label{formE_1}
E^{i,j}_1=\bigoplus S^{a_r(r)}\otimes\left[\left(\left[\left( \left[(M\otimes\Lambda^{b_1})^{T_1}\right]^{(-1)}\otimes S^{a_1}\otimes\Lambda^{b_2}\right)^{T_1}\right]^{(-1)}\otimes..\otimes\Lambda^{b_r}\right)^{T_1}\right]^{(r-1)}
\end{align}
where the direct sum is taken over all $a_i,b_j$ satisfying condition \eqref{ij-condition}. If $p=2$, then
\begin{align}\label{formE_1:p=2}
E^{i,j}_1=\bigoplus \left[\left(\left[\left( \left[(M\otimes S^{a_1})^{T_1}\right]^{(-1)}\otimes S^{a_2}\right)^{T_1}\right]^{(-1)}\otimes..\otimes S^{a_r}\right)^{T_1}\right]^{(r-1)}
\end{align}
where the direct sum is taken over all $a_i$ satisfying condition \eqref{ij-condition:p=2}. \end{theorem}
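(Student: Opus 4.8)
The plan is to prove Theorem \ref{B_rspectralsequence} by induction on $r$, using the base case $r=2$ worked out just above the statement as the template, and feeding the inductive step into the same two devices used there: the identification $\opH^\bullet(U_j, M)^{T_j} \cong \opH^\bullet(B_j, M)$ coming from \cite[Corollary I.6.9]{Jan:2003} (since $B_j/U_j \cong T_j$ is diagonalizable, so taking $T_j$-fixed points is exact and commutes with the spectral sequence), and Lemma \ref{easylemma} for pushing $T_j$-invariants through Frobenius twists. I would begin by fixing $p \neq 2$ and recording the $U_r$-spectral sequence \eqref{U_rspectralsequence} with its $E_1$-term indexed by the $a_i$'s and $b_j$'s satisfying \eqref{ij-condition}; applying $(-)^{T_r}$ term-by-term (legitimate since $T_r$ is linearly reductive, so exactness gives a spectral sequence of $T_r$-fixed points converging to $\opH^\bullet(U_r,M)^{T_r} = \opH^\bullet(B_r,M)$, and the $B^{(r)}$-structure is retained because $B$ acts on everything in sight compatibly).

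The heart is then a purely ``bookkeeping'' computation on a single $E_1$-summand. First I would observe that $S^{a_r(r)}$ is $T_r$-invariant and can be pulled entirely outside the fixed-point functor, leaving $S^{a_r(r)} \otimes \bigl(M \otimes S^{a_1(1)} \otimes \cdots \otimes S^{a_{r-1}(r-1)} \otimes \Lambda^{b_1} \otimes \Lambda^{b_2(1)} \otimes \cdots \otimes \Lambda^{b_r(r-1)}\bigr)^{T_r}$. Next, using that $T_r/T_{r-1} \cong T_1$ via $F_{r-1}$ and the factorization $(-)^{T_r} = \bigl((-)^{T_{r-1}}\bigr)^{T_r/T_{r-1}}$, I would peel off one layer at a time: the inner $T_{r-1}$-invariants leave the twist-$(r-1)$ factors $S^{a_{r-1}(r-1)},\Lambda^{b_r(r-1)}$ untouched (they are $T_{r-1}$-invariant and pull out), reducing the bracket to $\bigl(S^{a_{r-1}(r-1)} \otimes \Lambda^{b_r(r-1)} \otimes (\text{lower-twist stuff})^{T_{r-1}}\bigr)^{T_r/T_{r-1}}$; then Lemma \ref{easylemma} with $(i,j)=(r-1,r)$ replaces the outer $T_r/T_{r-1}$-invariant of a twisted module by the $(r-1)$-fold twist of the $T_1$-invariant of the untwisted module, converting $S^{a_{r-1}(r-1)}$ into $S^{a_{r-1}}$, $\Lambda^{b_r(r-1)}$ into $\Lambda^{b_r}$, and the inner $T_{r-1}$-invariant block into its own $(-(r-1))$-untwist. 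Iterating this ``peel, pull out the top-twist factors, apply Lemma \ref{easylemma}'' step $r-1$ times collapses every twist down to level $0$ or $1$ and produces exactly the nested expression \eqref{formE_1}; to make the induction airtight I would phrase the inductive hypothesis as a statement about the $B_{r-1}$-spectral-sequence $E_1$-term applied to the $B$-module $(M \otimes \Lambda^{b_1})^{T_1(-1)}$ (for $p\neq 2$), so that the $r$-th step is a single application of the $r=2$ computation with $M$ replaced by that module, rather than re-deriving the whole tower each time.

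For $p=2$ the argument is the same but strictly easier: spectral sequence \eqref{U_rspectralsequence:p=2} has no exterior factors and the index condition \eqref{ij-condition:p=2} is simpler, so the identical peel-and-untwist induction yields \eqref{formE_1:p=2}, with $(M \otimes S^{a_1})^{T_1(-1)}$ playing the role of the reduced module at each stage. I would close by noting that the differentials and the $B^{(r)}$-equivariance are inherited automatically from the $U_r$-spectral sequence and the exactness of the relevant fixed-point functors, so no extra work is needed there.

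The main obstacle I anticipate is purely notational rather than conceptual: keeping the twist-levels and the index ranges for the $a_i,b_j$ correctly synchronized through $r-1$ nested applications of Lemma \ref{easylemma}, and checking that condition \eqref{ij-condition} is preserved verbatim under the untwisting (each $S^{a_n(n)}$ really does become $S^{a_n}$ at the right stage and the exponents never get shifted). A secondary point needing a line of justification is the commutation of $(-)^{T_r}$ with the spectral sequence and the passage to a $B^{(r)}$-module structure on the abutment — both follow from $T_r$ being diagonalizable (hence $(-)^{T_r}$ exact) together with \cite[Corollary I.6.9]{Jan:2003}, but it is worth stating explicitly so the reader sees why the term-by-term manipulation of the $E_1$-page is legitimate.
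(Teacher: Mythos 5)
Your proposal is correct and follows essentially the same route as the paper: take $T_r$-invariants of the $U_r$-spectral sequence (exact because $T_r$ is diagonalizable, with \cite[I.6.9]{Jan:2003} identifying the abutment as $\opH^\bullet(B_r,M)$), pull the $T$-invariant highest-twist symmetric and exterior factors out of each $E_1$-summand, factor the fixed-point functor through $T_j/T_{j-1}\cong T_1$, and untwist via Lemma \ref{easylemma}, arguing by induction on $r$. The only difference is organizational — the paper starts the induction at $r=1$ and attaches the new outermost (highest-twist) layer at each step, whereas you unroll the same peeling from the top and repackage the hypothesis around the inner module — so no new ideas or missing steps are involved.
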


\begin{proof}
We only give a proof for the case when $p\ne 2$ since that for the case $p=2$ is very similar. First consider the spectral sequence for $U_1$-cohomology as follows:
\[ \bigoplus M\otimes S^{a_1(1)}\otimes\Lambda^{b_1}\Rightarrow \opH^n(U_1,M) \]
Taking $T_1$-invariant functor on both sides, we have
\begin{align*}
 \bigoplus (M\otimes S^{a_1(1)}\otimes\Lambda^{b_1})^{T_1} &\Rightarrow (\opH^n(U_1,M))^{T_1}\cong\opH^n(B_1,M)\\
 \bigoplus S^{a_1(1)}\otimes(M\otimes \Lambda^{b_1})^{T_1} &\Rightarrow \opH^n(B_1,M).
\end{align*}
This verifies the theorem for $r=1$. Suppose it is true for $r$. Apply the invariant functor $(-)^{T_{r+1}}$ on the $U_{r+1}$-spectral sequence, we obtain   
\[ \bigoplus \left( M\otimes S^{a_1(1)}\otimes..\otimes S^{a_{r+1}(r+1)}\otimes\Lambda^{b_1}\otimes..\otimes\Lambda^{b_{r+1}(r)}\right)^{T_{r+1}}\Rightarrow\opH^{i+j}(B_{r+1},M)\]
with $a_i,b_j$ satisfying 
\begin{equation*}
\left\{ \begin{array}{rcl} i+j  & = & 2(a_1+\cdots+a_{r+1})+b_1+\cdots+b_{r+1} \\
 i & = & \sum_{n=1}^{r+1}(a_np^n+b_np^{n-1}). \end{array}\right.
\end{equation*}
In order to complete our induction proof on $r$, we show that the $E_1$-page of this above spectral sequence can be rewritten in the form of \eqref{formE_1}. Indeed, using similar argument as for the case $r=2$ earlier, we have the following isomorphisms for each direct summand of LHS. 
\begin{align*}
& S^{a_{r+1}(r+1)}\otimes\left[ \left( M\otimes S^{a_1(1)}\otimes\cdots\otimes S^{a_r(r)}\otimes\Lambda^{b_1}\otimes\cdots\otimes \Lambda^{b_r(r-1)}\right)^{T_r}\otimes \Lambda^{b_{r+1}(r)}\right]^{T_{r+1}/T_r} \\
&\cong S^{a_{r+1}(r+1)}\otimes\left[ \left( M\otimes S^{a_1(1)}\otimes\cdots\otimes S^{a_{r-1}(r-1)}\otimes\Lambda^{b_1}\otimes\cdots\otimes\Lambda^{b_r(r-1)}\right)^{T_r}\otimes S^{a_r(r)}\otimes\Lambda^{b_{r+1}(r)}\right]^{T_{r+1}/T_r}.
\end{align*}
By applying the inductive hypothesis on the $T_r$-invariant module in the bracket and Lemma \ref{easylemma}, the last module is then isomorphic to the following.
\begin{align*}  
& S^{a_{r+1}(r+1)}\otimes\left[ \left[\left(\left[\left( \left[(M\otimes\Lambda^{b_1})^{T_1}\right]^{(-1)}\otimes S^{a_1}\otimes\Lambda^{b_2}\right)^{T_1}\right]^{(-1)}\otimes..\otimes\Lambda^{b_r}\right)^{T_1}\right]^{(r-1)}\otimes S^{a_r(r)}\otimes\Lambda^{b_{r+1}(r)}\right]^{T_{r+1}/T_r}  \\
&\cong S^{a_{r+1}(r+1)}\otimes\left[ \left(\left[\left(\left[\left( \left[(M\otimes\Lambda^{b_1})^{T_1}\right]^{(-1)}\otimes S^{a_1}\otimes\Lambda^{b_2}\right)^{T_1}\right]^{(-1)}\otimes..\otimes\Lambda^{b_r}\right)^{T_1}\right]^{(-1)}\otimes S^{a_r}\otimes\Lambda^{b_{r+1}}\right)^{T_1}\right]^{(r)}.
\end{align*}
This completes our proof.
\end{proof}

\subsection{Spectral sequence for $G_r$-cohomology}\label{G_rspectralsequence} 
Recall form \cite[II.12.2]{Jan:2003} that if $R^i\ind_B^GM=0$ for all $i>0$, then $G_r$-cohomology can be computed from the following spectral sequence 
\[R^n\ind_B^G(\opH^m(B_r,M)^{(-r)})\Rightarrow\opH^{n+m}(G_r,\ind_B^GM)^{(-r)}.\] 
In particular, for any dominant weight $\lambda\in X^+$, we always have 
\[R^n\ind_B^G(\opH^m(B_r,\lambda)^{(-r)})\Rightarrow\opH^{n+m}(G_r,\opH^0(\lambda))^{(-r)}.\] 
So our strategy is to compute $\opH^m(B_r,\lambda)$ first by Theorem \ref{B_rspectralsequence}, then use this spectral sequence to get $G_r$-cohomology of $\opH^0(\lambda)$.

\section{Cohomology}\label{cohomology}
We assume from now on that $G=SL_2$. Let $\alpha$ be the only simple root in the root system $\Phi$ of $G$. Denote $\omega$ the fundamental weight in the weight lattice $X$ \cite[13.2]{Hum:1978}. Then we have $\omega=\frac{\alpha}{2}$. Note also that $\fraku$ is a one-dimensional vector space so we have the following $T$-module identifications on each degree of the exterior algebra 
\[ \Lambda^i=\Lambda^i(\fraku^*)=\begin{cases}
k & \text{if $i=0$}, \\
\alpha & \text{if $i=1$}, \\
0  &  \text{otherwise}.
\end{cases} \]

This section is organized by the value of $p$. More explicitly, the first three subsections is to deal with the cohomology of $U_r$, $B_r$, and $G_r$ in the case $p\ne 2$ while the last one is to provide results in the case $p=2$. Although same methods are applied for both cases, their results are quite different. The author finds this organization is more convenient than listing 2 cases for each result of the section.

Our overall goal is computing the cohomology $\opH^n(G_r,\opH^0(\lambda))$ for every dominant weight $\lambda\in X^+$. Following the strategy in Subsection \ref{G_rspectralsequence}, we start with $U_r$-cohomology. We assume $p\ne 2$ for the first three subsections.

\subsection{Cohomology of $U_r$}

\begin{proposition}\label{U_rwithlambda}
Let $\lambda$ be a dominant weight. For each $r\ge 1$, there is a $B$-isomorphism
\[ \opH^n(U_r,\lambda)\cong \bigoplus_{n=2(a_1+\cdots+a_r)+b_1+\cdots+b_r} \lambda\otimes S^{a_1(1)}\otimes\cdots\otimes S^{a_r(r)}\otimes\Lambda^{b_1}\otimes\Lambda^{b_2(1)}\otimes\cdots\otimes\Lambda^{b_r(r-1)}. \]
\end{proposition}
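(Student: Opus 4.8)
The plan is to show that the spectral sequence \eqref{U_rspectralsequence} computing $\opH^\bullet(U_r,\lambda)$ degenerates at the $E_1$-page when $G = SL_2$ and $\lambda$ is a dominant weight. Once degeneration is established, the $E_1 = E_\infty$ terms listed there give exactly the direct sum decomposition in the statement, since for $SL_2$ the exterior powers $\Lambda^{b}$ vanish unless $b \in \{0,1\}$, so the indexing by the conditions \eqref{ij-condition} reduces to the stated summation over $n = 2(a_1+\cdots+a_r) + b_1 + \cdots + b_r$.

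The key step is the degeneration. I would argue by a weight-grading (or character) argument: each $E_1$-term is a $B$-module, hence in particular a $T$-module, and the differentials $d_r$ are $T$-equivariant, so they preserve $T$-weights. For $SL_2$ one has $\fraku^* \cong k_\alpha$ (so $\fraku^* = \alpha$ as a $T$-weight), hence $S^{a}(\fraku^*)$ has weight $a\alpha$ and $\Lambda^1 = \alpha$. Therefore the summand indexed by $(a_1,\dots,a_r,b_1,\dots,b_r)$ in $E^{i,j}_1$ carries $T$-weight
\[ \lambda + \Bigl(\sum_{n=1}^r a_n p^{n-1} + \sum_{n=1}^r b_n p^{n-1}\Bigr)\alpha, \]
using that a Frobenius twist by $p^{n}$ multiplies weights by $p^{n}$ and that $S^{a_n(n)}$ contributes $a_n p^n \cdot\omega\cdot 2 = a_n p^n\alpha$... more carefully, $S^{a_n}$ twisted $n$ times has weight $a_n p^n \alpha$ and $\Lambda^{b_n}$ twisted $n-1$ times has weight $b_n p^{n-1}\alpha$, so the total $\alpha$-coefficient above the $\lambda$ is $\sum_n(a_n p^n + b_n p^{n-1})$, which is precisely $i$ by \eqref{ij-condition}. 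Thus within a fixed total degree $i+j = m$, distinct lattice points $(i,j)$ on the $E_1$-page carry distinct $T$-weights, namely $\lambda + i\alpha$. A differential $d_s\colon E^{i,j}_s \to E^{i+s, j-s+1}_s$ changes the first coordinate by $s \ge 1$, hence changes the $T$-weight from $\lambda + i\alpha$ to $\lambda + (i+s)\alpha$; since these are genuinely different weights (as $\alpha \ne 0$), every such map is zero. Hence all higher differentials vanish and $E_1 = E_\infty$.

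After degeneration, I would assemble the answer: $\opH^n(U_r,\lambda)$ has a filtration whose associated graded is $\bigoplus_{i+j=n} E^{i,j}_\infty$, but because the distinct graded pieces live in distinct $T$-weight spaces $\lambda + i\alpha$ and the module is a $T$-module, the filtration splits canonically as a $T$-module, and in fact as a $B$-module since the extension problem is trivial once the pieces are weight-disjoint (or one can simply note the spectral sequence is one of $B$-modules and a degenerate one with weight-separated terms gives a $B$-module direct sum). Reading off $E^{i,j}_1$ from \eqref{U_rspectralsequence} and collecting all summands over $i+j=n$ gives exactly the claimed formula.

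The main obstacle I anticipate is being careful that the asserted isomorphism is one of $B$-modules (not merely $T$-modules or $k$-modules): the spectral sequence \eqref{U_rspectralsequence} is stated as a spectral sequence of $B$-modules, so the $E_1$-terms are $B$-modules and the abutment is filtered by $B$-submodules, but one must justify that the $B$-module extensions split. This follows because the $E_1$-terms that appear in a fixed total degree $n$ are pairwise non-isomorphic as $T$-modules — they have no common $T$-weights, as shown above — and a short exact sequence of $B$-modules whose outer terms have disjoint $T$-weights is necessarily split as $B$-modules. A secondary, purely bookkeeping point is checking the $p=2$ case is genuinely not needed here (the proposition is stated under the running assumption $p \ne 2$ for this subsection), and confirming the translation between the index set in \eqref{ij-condition} and the single summation constraint $n = 2(a_1+\cdots+a_r)+b_1+\cdots+b_r$, i.e. that every choice of $a_i, b_j$ with $0 \le b_j$ (and $b_j \le 1$ forced by $\Lambda^{b_j} = 0$ otherwise, for $SL_2$) and the degree constraint does occur, with the second equation of \eqref{ij-condition} merely recording which $(i,j)$ bidegree that summand sits in.
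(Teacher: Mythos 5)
Your core argument is precisely the paper's: each summand of $E_1^{i,j}$ is a one-dimensional $B$-module of $T$-weight $\lambda+i\alpha$ (since $S^{a_n(n)}$ contributes $a_np^n\alpha$ and $\Lambda^{b_n(n-1)}$ contributes $b_np^{n-1}\alpha$, summing to $i$), the differentials respect the $T$-action, and $d_s$ shifts $i$ by $s\ge 1$, so every higher differential vanishes and the spectral sequence collapses at $E_1$; the statement is then read off from \eqref{U_rspectralsequence}, with $b_j\in\{0,1\}$ forced by $\Lambda^b=0$ for $b\ge 2$.

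The one point where you go beyond the paper --- passing from the $E_\infty$-filtration of $\opH^n(U_r,\lambda)$ to a $B$-module direct sum --- rests on a false claim: a short exact sequence of $B$-modules whose outer terms have disjoint $T$-weights need \emph{not} split as $B$-modules. Indeed $\Ext^1_B(k,k_{-p^s\alpha})\cong\opH^1(B,-p^s\alpha)\ne 0$, and since the weights occurring in a fixed $\opH^n(U_r,\lambda)$ are of the form $\lambda+i\alpha$ for various $i$, differences equal to $p^s\alpha$ are not ruled out by your weight bookkeeping, so weight-disjointness alone does not settle the extension problem. The easy repair, special to $SL_2$: $U$ is abelian, so its conjugation action on $U_r$ is trivial, and $U$ acts trivially on the character $\lambda$; hence $U$ acts trivially on $\opH^n(U_r,\lambda)$, which is therefore a module for $B/U\cong T$, hence semisimple, so the filtration splits and the $B$-module structure is determined by the $T$-character. (This is the same observation the paper invokes later, in the proof of Proposition \ref{B_rwithlambda}.) With that substitution your proof is complete and coincides with the paper's.
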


\begin{proof}
Recall from Section \ref{generalsection} that $U_r$-cohomology can be computed by the following spectral sequence 
\begin{equation*}
E^{i,j}_1=\bigoplus \lambda\otimes S^{a_1(1)}\otimes\cdots\otimes S^{a_r(r)}\otimes\Lambda^{b_1}\otimes\Lambda^{b_2(1)}\otimes\cdots\otimes\Lambda^{b_r(r-1)}\Rightarrow\opH^{i+j}(U_r,\lambda)
\end{equation*}
where the direct sum is taken over all tuples $(a_1,\ldots,a_r,b_1,\ldots,b_r)\in\mathbb{N}^r\times\{0,1\}^r$ satisfying $i+j=2(a_1+\ldots+a_r)+b_1+\ldots+b_r$ and $i=\sum_{n=1}^{r}(a_np^n+b_np^{n-1})$.
Observe that as a $B$-module, $S^m=m\alpha$ and $\Lambda^0=k,\Lambda^1=\alpha$, and $\Lambda^m=0$ for $m>1$. Consider for each $n>0$, we have $d^{i,j}_n:E^{i,j}_n\rightarrow E^{i+n,j-n+1}_n$ where the $B$-module $E_n^{i,j}$ has weight 
\begin{align*}
\lambda + pa_1\alpha+\cdots+p^ra_r\alpha+b_1\alpha+\cdots+p^{r-1}b_r\alpha &=\lambda +(pa_1+\cdots+p^ra_r+b_1+\cdots+p^{r-1}b_r)\alpha \\
&=\lambda + i\alpha.
\end{align*}
Likewise, $E_n^{i+n,j-n+1}$ is of weight $\lambda + (i+n)\alpha$. As all the differentials respect to $T$-action, we must have $\lambda +i\alpha=\lambda+(i+n)\alpha$ if the map is nonzero. This implies $n=0$, and so $d^{i,j}_n=0$ for all $i,j$ and $n>0$. Hence the spectral sequence collapses at the first page. The result therefore follows.
\end{proof}

When $\lambda=0$, the isomorphism is also compatible with the ring structure. This computation was completely done by Andersen-Jantzen in \cite[2.4]{AJ:1984}. We recall their result as follows.

\begin{corollary}\label{U_rcohoring}
For each $r\ge 1$, there is an isomorphism of $B$-algebras
\[\opH^{\bullet}(U_r,k)\cong S^{\bullet(1)}\otimes\cdots\otimes S^{\bullet(r)}\otimes\Lambda^\bullet\otimes\cdots\otimes\Lambda^{\bullet(r-1)}.\]
Consequently,
\[\opH^{\bullet}(U_r,k)_{\red}\cong S^{\bullet(1)}\otimes\cdots\otimes S^{\bullet(r)}.\]
\end{corollary}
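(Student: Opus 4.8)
The statement to prove is Corollary \ref{U_rcohoring}, which describes $\opH^\bullet(U_r,k)$ as an algebra and identifies its reduced version. The plan is to specialize Proposition \ref{U_rwithlambda} to the case $\lambda = 0$ and upgrade the vector-space (indeed $B$-module) isomorphism there to an isomorphism of graded $B$-algebras. First I would recall that Proposition \ref{U_rwithlambda} was proved by showing the spectral sequence \eqref{U_rspectralsequence} collapses at $E_1$ for weight reasons; since \eqref{U_rspectralsequence} is a spectral sequence of algebras when $M = k$ (the multiplicative structure on the $E_1$-page is the one coming from the tensor product of symmetric and exterior algebras, compatible with cup product, as in \cite[I.4.27, I.9.14]{Jan:2003}), collapse at $E_1$ means there are no nontrivial extensions to untangle in the associated graded, and the algebra structure on $\opH^\bullet(U_r,k)$ agrees with that on $\bigoplus E_1^{i,j}$. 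This is exactly the Andersen--Jantzen computation \cite[2.4]{AJ:1984}, so the cleanest route is simply to invoke that reference after noting our spectral sequence is the same one; but I would also indicate the self-contained argument for completeness.

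The key steps, in order: (1) observe that the total complex computing $\opH^\bullet(U_r,k)$ from \eqref{U_rspectralsequence} carries a DGA structure whose $E_1$-page is $S^{\bullet(1)}\otimes\cdots\otimes S^{\bullet(r)}\otimes\Lambda^\bullet\otimes\Lambda^{\bullet(1)}\otimes\cdots\otimes\Lambda^{\bullet(r-1)}$ as a bigraded $B$-algebra; (2) invoke the weight argument from the proof of Proposition \ref{U_rwithlambda} (with $\lambda=0$) to conclude all higher differentials vanish, so the spectral sequence degenerates and the induced filtration on $\opH^\bullet(U_r,k)$ has associated graded equal to this $E_1$-algebra; (3) argue the filtration splits as algebras — again by the $T$-weight grading: each graded piece sits in a distinct weight space $i\alpha$, so the filtration is actually a grading and the extension is trivial, giving the claimed $B$-algebra isomorphism; (4) for the reduced statement, note $\Lambda^{\bullet(j)}$ for $0 \le j \le r-1$ is an exterior algebra on a one-dimensional space (here $\fraku$ is one-dimensional, so $\Lambda^\bullet(\fraku^*) = k \oplus \alpha$ with the degree-one generator squaring to zero), hence every element of $\Lambda^{\bullet(j)}$ of positive degree is nilpotent, and modding out the nilradical kills exactly the factors $\Lambda^\bullet\otimes\cdots\otimes\Lambda^{\bullet(r-1)}$, leaving $S^{\bullet(1)}\otimes\cdots\otimes S^{\bullet(r)}$, which is a polynomial ring and hence reduced.

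I expect the main obstacle to be step (3): promoting the associated-graded isomorphism to an honest algebra isomorphism. In general a degenerate spectral sequence of algebras only gives $\gr \opH^\bullet \cong E_\infty$ as algebras, not $\opH^\bullet \cong E_\infty$. Here the rescue is that the grading by $T$-weight (the element $i\alpha$ computed in the proof of Proposition \ref{U_rwithlambda}) refines the cohomological filtration and separates the filtration layers into distinct weight spaces, so the filtration is split by the weight-space decomposition; one must check that the cup product is compatible with this weight grading, which it is since cup product is $T$-equivariant. Alternatively — and this is what I would actually write in the paper — one simply cites \cite[2.4]{AJ:1984}, where the full $B$-algebra structure is established, and states the reduced consequence, spending the bulk of the proof only on the (easy) nilradical computation in step (4). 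Once the algebra isomorphism is in hand, the reduced statement is immediate: $\rad\bigl(S^{\bullet(1)}\otimes\cdots\otimes S^{\bullet(r)}\otimes\Lambda^\bullet\otimes\cdots\otimes\Lambda^{\bullet(r-1)}\bigr)$ is the ideal generated by the exterior-algebra generators, and the quotient is $S^{\bullet(1)}\otimes\cdots\otimes S^{\bullet(r)}$.
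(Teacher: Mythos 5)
Your proposal is correct and follows essentially the same route as the paper, which simply specializes Proposition \ref{U_rwithlambda} to $\lambda=0$ and cites Andersen--Jantzen \cite[2.4]{AJ:1984} for compatibility with the ring structure, with the reduced statement coming from killing the nilpotent exterior generators. Your additional self-contained argument (using the $T$-weight grading by $i\alpha$ to split the multiplicative filtration and identify $\opH^\bullet(U_r,k)$ with the $E_1$-algebra) is sound and only elaborates on what the paper leaves to the citation.
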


\begin{remark}
Observe that $U$ is isomorphic to $\mathbb{G}_a$. Our method can be applied for $V=\mathbb{G}_a^n$ to obtain the same result as in \cite[Proposition I.4.27(b)]{Jan:2003}. Our approach not only gives a shorter proof but also provides more information on the module structure of $\opH^\bullet(U_r,k)$. It guarantees the isomorphism is compatible with the $B$-module action which will become handy in computing $G_r$-cohomology later.
\end{remark} 

\subsection{Cohomology of $B_r$}\label{B_rcohomology}
For later convenience, we identify $S^{\bullet(i)}$ with the polynomial ring $k[x]^{(i)}=k[x_{i}]$ where $x_i$ has weight $p^i\alpha$ and degree 2. We also denote for each $1\le i\le r$ by $y_{i-1}$ the generator of the exterior algebra $\Lambda^{(i)}$. In particular, we have
\[ S^{\bullet(1)}\otimes\cdots\otimes S^{\bullet(r)}\otimes\Lambda^\bullet\otimes\cdots\otimes\Lambda^{\bullet(r-1)}=k[x_1,\ldots,x_r]\otimes\Lambda(y_0,\ldots,y_{r-1}). \]
Now we make use of this notation to describe the $B_r$-cohomology.
 
\begin{proposition}\label{B_rwithlambda}
Suppose $\lambda$ is a dominant weight in $X^+$, i.e., $\lambda=m\omega$ for some non-negative integer $m$. For each $r\ge 1$, there is a $B$-isomorphism 
\[\opH^n(B_r,\lambda)^{(-r)}\cong\bigoplus\left<x_1^{a_1}y_0^{b_1}x_2^{a_2}y_1^{b_2}\cdots x_r^{a_r}y_{r-1}^{b_r}\right> \]
where the direct sum is taken over all $a_i,b_j$ satisfying the following conditions
\begin{equation}
\begin{cases}\label{ab-condition} 
a_i\in\mathbb{N}~ \mbox{and}~ b_i\in\{0,1\}~ \mbox{for all} ~1\le i\le r \\
n=2(a_1+\cdots+a_r)+b_1+\cdots+b_r \\
\frac{m}{2}+b_1+(a_1+b_2)p+\cdots+a_rp^r\in p^rX^+.
\end{cases}
\end{equation}

\end{proposition}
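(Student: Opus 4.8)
The plan is to run the $B_r$-spectral sequence from Theorem \ref{B_rspectralsequence} with $M=\lambda$ and to show, as in the proof of Proposition \ref{U_rwithlambda}, that it collapses on the $E_1$-page for weight reasons. First I would unwind the $E_1$-term \eqref{formE_1} in the rank-one setting $G=SL_2$: since $\fraku$ is one-dimensional, $S^a = a\alpha = 2a\omega$ (generator $x$ in degree $2$) and $\Lambda^b$ is $k$ or $\alpha$ according as $b=0$ or $1$ (generator $y$ in degree $1$), and all the factors appearing inside the iterated $T_1$-invariants are one-dimensional weight spaces. The iterated construction ``take $T_1$-invariants, untwist, tensor on the next $S\otimes\Lambda$ factor, repeat'' simply selects, step by step, those weights that are divisible by $p$; after $r$ such steps and the final twist back by $(r-1)$ (together with the outer $S^{a_r(r)}$ factor, which I would fold in to make the statement uniform as $S^{a_1}\cdots S^{a_r}$, i.e. $x_1^{a_1}\cdots x_r^{a_r}$), the surviving summand indexed by $(a_1,\dots,a_r,b_1,\dots,b_r)$ is precisely the one-dimensional space $\langle x_1^{a_1}y_0^{b_1}\cdots x_r^{a_r}y_{r-1}^{b_r}\rangle$, and the condition that every intermediate invariant be nonzero is exactly the arithmetic condition $\tfrac{m}{2}+b_1+(a_1+b_2)p+\cdots+a_rp^r\in p^rX^+$ in \eqref{ab-condition}. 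I would verify this by tracking the weight through the recursion: writing the weight of $\lambda\otimes\Lambda^{b_1}$ as $(\tfrac m2 + b_1)\alpha$ (in units of $\omega$, $m+2b_1$), the first $T_1$-invariant survives iff this is in $pX$, i.e. $m+2b_1 \in pX$ after identifying, and the untwist divides by $p$; iterating gives the stated closed form.

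The collapse argument is the same $T$-equivariance trick as in Proposition \ref{U_rwithlambda}: the differentials $d_n^{i,j}\colon E_n^{i,j}\to E_n^{i+n,j-n+1}$ commute with the residual torus action, and from the explicit description the weight of $E_n^{i,j}$ depends only on the homological degree $i$ of the $U_r$-complex (it is $\lambda + i\alpha$ before passing to invariants, hence the invariants sit in fixed weight spaces that differ between source and target when $n>0$). Since source and target of a nonzero differential would have to share a weight, all differentials vanish and $E_1 = E_\infty$. Assembling the $E_\infty$-page along the total degree $n = i+j = 2(a_1+\cdots+a_r)+b_1+\cdots+b_r$, and using that each graded piece is one-dimensional so there are no extension problems, yields the claimed $B$-module isomorphism. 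The $B^{(r)}$-module (equivalently, after untwisting, $B$-module) structure is exactly what Theorem \ref{B_rspectralsequence} provides, which is why the statement is phrased for $\opH^n(B_r,\lambda)^{(-r)}$.

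The main obstacle I anticipate is purely bookkeeping rather than conceptual: correctly propagating the twists through the $r$-fold iterated invariants in \eqref{formE_1} and checking that the accumulated divisibility constraints telescope into the single congruence in \eqref{ab-condition}, with the powers of $p$ distributed as $b_1 + (a_1+b_2)p + (a_2+b_3)p^2 + \cdots + a_r p^r$. One has to be careful that the $S^{a_i}$ factor enters the $i$-th invariant-and-untwist step while $\Lambda^{b_{i+1}}$ enters at the same stage (that is the content of the reindexing $S^{a_r(r)}\otimes\Lambda^{b_{r+1}(r)}$ step in the proof of Theorem \ref{B_rspectralsequence}), so that $a_i$ and $b_{i+1}$ get the same power of $p$. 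A secondary point worth stating explicitly is that, because $p\neq 2$ and we may identify $S^{\bullet(i)}$ with $k[x_i]$ with $x_i$ of weight $p^i\alpha$, the factor $S^{a_r(r)}$ pulled outside in \eqref{formE_1} is already $p^r$-divisible in weight and so contributes no further constraint — it is just the top variable $x_r$ — which is why the final congruence only involves $a_r p^r$ and not a separate condition on $a_r$.
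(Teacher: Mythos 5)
Your proposal is correct and takes essentially the same route as the paper: collapse of the spectral sequence of Theorem \ref{B_rspectralsequence} at $E_1$ by the $T$-weight argument (the paper deduces this from the collapse already established for $U_r$ in Proposition \ref{U_rwithlambda}, which is the same weight trick you rerun directly), followed by unwinding the iterated $T_1$-invariants in the rank-one case into the monomial description and the single congruence $\frac{m}{2}+b_1+(a_1+b_2)p+\cdots+a_rp^r\in p^rX^+$. Your extra care about the twist bookkeeping and the absence of extension problems is just a more explicit write-up of steps the paper leaves implicit.
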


\begin{proof}
It is observed that collapsing of the spectral sequence \eqref{U_rspectralsequence} implies the collapse of the one in Theorem \ref{B_rspectralsequence}. So Proposition \ref{U_rwithlambda} implies that
\[\opH^n(B_r,\lambda)\cong\bigoplus S^{a_r(r)}\otimes\left[\left(\left[\left( \left[(M\otimes\Lambda^{b_1})^{T_1}\right]^{(-1)}\otimes S^{a_1}\otimes\Lambda^{b_2}\right)^{T_1}\right]^{(-1)}\otimes..\otimes\Lambda^{b_r}\right)^{T_1}\right]^{(r-1)}\]
where the direct sum is taken over all tuples $(a_1,\ldots,a_r,b_1,\ldots,b_r)\in\mathbb{N}^r\times\{0,1\}^r$ satisfying $n=2(a_1+\ldots+a_r)+b_1+\ldots+b_r$. Using the identifications earlier, we can explicitly write out the cohomology for $B_r$ as a decomposition of weight spaces 
\[\opH^n(B_r,\lambda)^{(-r)}\cong\bigoplus\left<x_1^{a_1}y_0^{b_1}x_2^{a_2}y_1^{b_2}\cdots x_r^{a_r}y_{r-1}^{b_r}\right> \]
where each monomial weights 
\[\left[a_r\alpha+\frac{\frac{\frac{\frac{\lambda+b_1\alpha}{p}+(a_1+b_2)\alpha}{p}+(a_2+b_3)\alpha}{p}+\cdots+(a_{r-1}+b_r)\alpha}{p}\right]\in X^+.\]
This is equivalent to $\frac{m}{2}+b_1+(a_1+b_2)p+\cdots+a_rp^r\in p^rX^+$. Now as $U$ is an abelian group in this case, it trivially acts on both sides of the above isomorphism. Hence, this isomorphism is compatible with $B$-action via the identification $B/U\cong T$.
\end{proof}

\begin{remark}\label{remarkB_r}
For each tuple $(a_1,\ldots,a_r,b_1,\ldots,b_r)\in\mathbb{N}^r\times\{0,1\}^r$ satisfying \eqref{ab-condition}, there is $\gamma=n\omega$ for some non-negative integer $n$ such that
\begin{equation}\label{lambda-eqn}
\frac{m}{2}+b_1+(a_1+b_2)p+\cdots+(a_{r-1}+b_r)p^{r-1}+a_rp^r=\frac{n}{2}p^r.
\end{equation}
Let $N_r(p,m,n)$ denote the number of solutions $(a_1,\ldots,a_r,b_1,\ldots,b_r)\in\mathbb{N}^r\times\{0,1\}^r$ for the equation. As a consequence, we establish our goal of this subsection.
\end{remark}

\begin{theorem}\label{character2}
Suppose $\lambda=m\omega\in X^+$. Then there is a $B$-module isomorphism
\[ \opH^\bullet(B_r,\lambda)^{(-r)}\cong\bigoplus_{n=0}^\infty (n\omega)^{N_r(p,m,n)} \] and so
\[ \ch\opH^\bullet(B_r,\lambda)^{(-r)}=\sum_{n\in\mathbb{N}}N_r(p,m,n)e(n\omega). \]
\end{theorem}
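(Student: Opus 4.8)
The plan is to deduce Theorem \ref{character2} directly from Proposition \ref{B_rwithlambda} together with the bookkeeping introduced in Remark \ref{remarkB_r}. First I would recall that Proposition \ref{B_rwithlambda} already exhibits $\opH^n(B_r,\lambda)^{(-r)}$ as a direct sum of one-dimensional weight spaces $\left<x_1^{a_1}y_0^{b_1}\cdots x_r^{a_r}y_{r-1}^{b_r}\right>$, indexed by the tuples $(a_1,\ldots,a_r,b_1,\ldots,b_r)\in\mathbb{N}^r\times\{0,1\}^r$ satisfying \eqref{ab-condition}. Summing over all $n\ge 0$ simply removes the first constraint in \eqref{ab-condition}, namely the degree equation $n=2(a_1+\cdots+a_r)+b_1+\cdots+b_r$, leaving only the membership condition $\tfrac{m}{2}+b_1+(a_1+b_2)p+\cdots+a_rp^r\in p^rX^+$, which by Remark \ref{remarkB_r} is equivalent to the existence of a non-negative integer $n$ with \eqref{lambda-eqn}. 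So $\opH^\bullet(B_r,\lambda)^{(-r)}$ is the direct sum, over all admissible tuples, of the weight space $\left<x_1^{a_1}y_0^{b_1}\cdots x_r^{a_r}y_{r-1}^{b_r}\right>$, and the twisted-back weight of this monomial is precisely $n\omega$ where $n$ is the integer supplied by \eqref{lambda-eqn}.

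Next I would group the admissible tuples according to the value of $n$. For a fixed $n$, the number of tuples giving that value of $n$ is by definition $N_r(p,m,n)$, and each contributes a one-dimensional $B$-module of weight $n\omega$ (with $U$ acting trivially, as already noted in the proof of Proposition \ref{B_rwithlambda}, so the $B$-structure is just the $T$-structure through $B/U\cong T$). Collecting these contributions yields the $B$-module isomorphism
\[ \opH^\bullet(B_r,\lambda)^{(-r)}\cong\bigoplus_{n=0}^\infty (n\omega)^{N_r(p,m,n)}. \]
Taking formal characters of both sides, and using that a one-dimensional module of weight $n\omega$ has character $e(n\omega)$, gives
\[ \ch\opH^\bullet(B_r,\lambda)^{(-r)}=\sum_{n\in\mathbb{N}}N_r(p,m,n)e(n\omega), \]
which is the second assertion.

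The only point requiring a little care — and the place I would expect a referee to look hardest — is the claim that each admissible tuple determines a \emph{unique} $n$, so that the partition of the index set by the value of $n$ is well defined and $N_r(p,m,n)$ is finite for each $n$. Uniqueness is immediate since \eqref{lambda-eqn} solves for $n$ explicitly in terms of the tuple; finiteness follows because $\tfrac{n}{2}p^r$ is bounded by a fixed affine function of $a_1+\cdots+a_r$ while the $b_i$ range over a finite set, so only finitely many tuples can map to a given $n$. One should also note that $N_r(p,m,n)=0$ unless $n$ has the correct parity relative to $m$ and $p$, which is harmless for the statement as written. Beyond this, the proof is a direct repackaging of Proposition \ref{B_rwithlambda} and involves no new input.
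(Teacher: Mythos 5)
Your proposal is correct and follows exactly the route the paper intends: Theorem \ref{character2} is stated there as an immediate consequence of Proposition \ref{B_rwithlambda} and the definition of $N_r(p,m,n)$ in Remark \ref{remarkB_r}, which is precisely the regrouping-by-weight argument you carry out (your extra remarks on uniqueness of $n$ and finiteness of $N_r(p,m,n)$ are sound and only make explicit what the paper leaves implicit).
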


\subsection{Cohomology of $G_r$}
We are now ready to compute $G_r$-cohomology.

\begin{theorem}\label{G_rwithlambda}
Suppose $\lambda=m\omega\in X^+$. Then there are $G$-module isomorphisms
\begin{align*}
\opH^\bullet(G_r,\opH^0(\lambda))^{(-r)}\cong\ind_B^G(\opH^\bullet(B_r,\lambda)^{(-r)})\cong
\bigoplus_{n=0}^\infty\ind_B^G\left(n\omega\right)^{N_r(p,m,n)}.
\end{align*}
where each $N_r(p,m,n)$ is defined in Remark \ref{remarkB_r}. In particular, if $\lambda=0$, then the first isomorphism is compatible with the cup-products on both sides, i.e., it is an isomorphism of graded $G$-algebras. 
\end{theorem}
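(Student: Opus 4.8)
The plan is to deduce Theorem \ref{G_rwithlambda} from the $G_r$-cohomology spectral sequence of Subsection \ref{G_rspectralsequence} together with the $B_r$-computation in Theorem \ref{character2}. First I would invoke the hypothesis that allows the spectral sequence: since $\lambda\in X^+$, Kempf vanishing gives $R^i\ind_B^G(\lambda)=0$ for $i>0$, and $\ind_B^G(\lambda)=\opH^0(\lambda)$, so the spectral sequence
\[ R^n\ind_B^G\bigl(\opH^m(B_r,\lambda)^{(-r)}\bigr)\Rightarrow\opH^{n+m}(G_r,\opH^0(\lambda))^{(-r)} \]
is available. By Theorem \ref{character2} we know $\opH^m(B_r,\lambda)^{(-r)}$ as a $B$-module: it is a direct sum of one-dimensional modules $n\omega$ with multiplicities $N_r(p,m,n)$, collected over all $m$ with the appropriate parity/degree constraints. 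Crucially every weight $n\omega$ occurring is dominant, so $R^i\ind_B^G(n\omega)=0$ for all $i>0$ by Kempf vanishing again. Hence $R^n\ind_B^G$ of the $E_1$-term (or rather the relevant row) vanishes for $n>0$, which forces the spectral sequence to collapse onto the row $n=0$.

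Next I would record what collapse yields: $\opH^\bullet(G_r,\opH^0(\lambda))^{(-r)}\cong\ind_B^G\bigl(\opH^\bullet(B_r,\lambda)^{(-r)}\bigr)$, where I use that $\ind_B^G$ is exact on modules with dominant weight filtration (all weights appearing are dominant) so it commutes with taking the direct sum over cohomological degrees. Then substituting the explicit $B$-module description from Theorem \ref{character2} and using that induction commutes with direct sums gives
\[ \ind_B^G\bigl(\opH^\bullet(B_r,\lambda)^{(-r)}\bigr)\cong\bigoplus_{n=0}^\infty\ind_B^G(n\omega)^{N_r(p,m,n)}=\bigoplus_{n=0}^\infty\opH^0(n\omega)^{N_r(p,m,n)}, \]
which is the asserted chain of isomorphisms. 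One should be slightly careful that the spectral sequence is of $G$-modules (equivalently $G^{(r)}$-modules after twisting), so all the isomorphisms above are $G$-equivariant; this follows because the edge maps and the identification of the $E_2 = E_\infty$ page are natural.

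For the final sentence, when $\lambda=0$ I would argue that the spectral sequence $R^n\ind_B^G(\opH^m(B_r,k)^{(-r)})\Rightarrow\opH^{n+m}(G_r,k)^{(-r)}$ is multiplicative: it is obtained by applying $\ind_B^G$ (a lax monoidal functor, via the Grothendieck spectral sequence for the composite of $\ind_B^G$ with $(-)^{G_r/B_r}$-type functors, cf.\ \cite[II.12.2]{Jan:2003}) and the product on $\opH^\bullet(B_r,k)$ is $B$-equivariant. Since the spectral sequence collapses, the edge homomorphism $\opH^\bullet(G_r,k)^{(-r)}\to\ind_B^G(\opH^\bullet(B_r,k)^{(-r)})$ is an isomorphism and is a ring homomorphism, hence an isomorphism of graded $G$-algebras. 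The main obstacle here is not the collapse—which is immediate from dominance and Kempf vanishing—but verifying the multiplicative structure: one must check that the spectral sequence of Subsection \ref{G_rspectralsequence} is compatible with cup products and that the edge map respects the algebra structure, which I would do by citing the standard multiplicativity of the Grothendieck spectral sequence and the fact that $\ind_B^G$ of the cup product on $\opH^\bullet(B_r,k)$ computes the cup product on $\opH^\bullet(G_r,k)$ after untwisting.
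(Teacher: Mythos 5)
Your argument is correct and follows essentially the same route as the paper: the spectral sequence of Subsection \ref{G_rspectralsequence}, dominance of all weights of $\opH^\bullet(B_r,\lambda)$ forcing $R^{m}\ind_B^G$ to vanish for $m>0$ by Kempf vanishing, collapse, and then the explicit decomposition from Theorem \ref{character2}. The only cosmetic differences are that you obtain dominance directly from Theorem \ref{character2} rather than from the weights of $U_r$-cohomology, and for the $\lambda=0$ multiplicative statement you sketch the standard multiplicativity of the spectral sequence where the paper simply cites \cite[Remark 3.2]{AJ:1984}.
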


\begin{proof}
Recall the spectral sequence in Section \ref{G_rspectralsequence}, we have
\[E^{m,n}_2=R^m\ind_B^G(\opH^n(B_r,\lambda)^{(-r)})\Rightarrow\opH^{n+m}(G_r,\opH^0(\lambda))^{(-r)}.\] 
Note that all weights of $\opH^\bullet(B_r,\lambda)$ are also weights of $\opH^\bullet(U_r,\lambda)$ which are in turn weights of $\lambda\otimes S^{\bullet(1)}\otimes\cdots\otimes S^{\bullet(r)}\otimes\Lambda^\bullet\otimes\Lambda^{\bullet(1)}\otimes\cdots\otimes\Lambda^{\bullet(r-1)}$ by the spectral sequence \eqref{U_rspectralsequence}. So all weights of $\opH^n(B_r,\lambda)$ are dominant for each $n\ge 0$. It follows from Kempf's vanishing that $R^m\ind_B^G(\opH^n(B_r,\lambda))=0$ for each $n\ge 0$ and $m>0$. Hence the spectral sequence collapses at the first page and we obtain 
\begin{equation*}
\opH^{n}(G_r,\opH^0(\lambda))^{(-r)}\cong\ind_B^G\left(\opH^n(B_r,\lambda)^{(-r)}\right).
\end{equation*}
This implies the first isomorphism in the theorem, that is, 
\[ \opH^\bullet(G_r,\opH^0(\lambda))^{(-r)}\cong\ind_B^G(\opH^\bullet(B_r,\lambda)^{(-r)}). \]
Moreover, by \cite[Remark 3.2]{AJ:1984}, this isomorphism respects the graded $\opH^{n}(G_r,k)$-module structure on both sides. Hence, in the case $\lambda=0$ it is a graded $G$-algebra isomorphism. The other isomorphism follows by Theorem \ref{character2}.
\end{proof}

\begin{remark}
This theorem gives us an explicit good filtration of $\opH^\bullet(G_r,\opH^0(\lambda))^{(-r)}$; thus, showing the same property for each $\opH^n(G_r,\opH^0(\lambda))^{(-r)}$ with $n\ge 0, r\ge 1$. This result is similar to the one in \cite[4.5(1)]{AJ:1984} and \cite[Corollary 2.2]{vdK:1993}. In fact, for arbitrary simple algebraic group $G$, it is conjectured that $\opH^n(G_r,\opH^0(\lambda))$ has a good filtration \cite[12.15]{Jan:2003}.
\end{remark}

Following Theorem \ref{character2}, we immediately obtain the analogous result for $G_r$-cohomology.
\begin{corollary}
Suppose $\lambda=m\omega\in X^+$. Then we have
\begin{align*}
\ch\opH^\bullet(G_r,\opH^0(\lambda))^{(-r)} &=\sum_{n\in\mathbb{N}}N_r(p,m,n)\ch\opH^0(n\omega).
\end{align*}
\end{corollary}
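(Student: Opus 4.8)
The plan is to read this off directly from Theorem~\ref{G_rwithlambda} by passing to formal characters, exactly as the $B_r$-statement (Theorem~\ref{character2}) was obtained from Proposition~\ref{B_rwithlambda}. Theorem~\ref{G_rwithlambda} already supplies a $G$-module isomorphism
\[ \opH^\bullet(G_r,\opH^0(\lambda))^{(-r)}\cong\bigoplus_{n=0}^\infty\ind_B^G(n\omega)^{N_r(p,m,n)}, \]
so the only remaining work is to apply the character map to both sides and to identify the summands.

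First I would invoke additivity of the formal character over direct sums, which gives that the character of the right-hand side equals $\sum_{n\ge 0}N_r(p,m,n)\,\ch\ind_B^G(n\omega)$. For this to be a legitimate expression one needs the sum to be locally finite: by Remark~\ref{remarkB_r} each $N_r(p,m,n)$ is a finite non-negative integer, and in any fixed cohomological degree $N=i+j$ only finitely many tuples $(a_1,\dots,a_r,b_1,\dots,b_r)\in\mathbb{N}^r\times\{0,1\}^r$ satisfy \eqref{ab-condition}, so in each degree both sides are honest finite-dimensional $G$-modules and the identity may be checked degree by degree. Second I would note $\ind_B^G(n\omega)\cong\opH^0(n\omega)$ up to the usual conventions — in any case the two share the same character, namely the value given by Weyl's character formula — so that $\ch\ind_B^G(n\omega)=\ch\opH^0(n\omega)$. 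Substituting yields
\[ \ch\opH^\bullet(G_r,\opH^0(\lambda))^{(-r)}=\sum_{n\in\mathbb{N}}N_r(p,m,n)\,\ch\opH^0(n\omega), \]
which is the assertion.

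There is essentially no obstacle here: the corollary is a purely formal consequence of the module-level decomposition in Theorem~\ref{G_rwithlambda} together with the additivity of characters. The single point that deserves an explicit sentence is the well-definedness of the character of the infinite direct sum, and that is taken care of by the finiteness of the $N_r(p,m,n)$ recorded in Remark~\ref{remarkB_r}; alternatively one simply states the equality separately in each cohomological degree, where there is nothing to worry about.
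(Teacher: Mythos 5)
Your argument is exactly the paper's: the corollary is stated as an immediate consequence of Theorem \ref{G_rwithlambda} (itself resting on Theorem \ref{character2}), obtained by taking formal characters of the decomposition $\bigoplus_{n}\ind_B^G(n\omega)^{N_r(p,m,n)}$ and using $\ind_B^G(n\omega)=\opH^0(n\omega)$. Your extra remark on degreewise finiteness of the sum is a harmless elaboration, not a departure from the paper's route.
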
 

\subsection{The case $p=2$} The results in this subsection are analogs of those in previous subsections. Most of proofs will be omitted.

\begin{proposition}\label{U_rwithlambda:p=2}
Let $\lambda$ be a dominant weight. For each $r\ge 1$, there is a $B$-isomorphism
\[ \opH^n(U_r,\lambda)\cong \bigoplus_{n=a_1+\cdots+a_r} \lambda\otimes S^{a_1}\otimes S^{a_2(1)}\cdots\otimes S^{a_r(r-1)}. \]
Consequently, there is an isomorphism of $B$-algebras
\[ \opH^\bullet(U_r,k)\cong S^\bullet\otimes S^{\bullet(1)}\otimes\cdots\otimes S^{\bullet(r-1)}. \]
\end{proposition}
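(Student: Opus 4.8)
The plan is to mimic the $p\ne 2$ argument, but now using the $p=2$ spectral sequence \eqref{U_rspectralsequence:p=2}, which has no exterior algebra factors. First I would write down the $E_1$-page for $\opH^\bullet(U_r,\lambda)$ from \eqref{U_rspectralsequence:p=2}: a direct sum of terms $\lambda\otimes S^{a_1}\otimes S^{a_2(1)}\otimes\cdots\otimes S^{a_r(r-1)}$ indexed by tuples $(a_1,\ldots,a_r)\in\mathbb{N}^r$ satisfying the conditions \eqref{ij-condition:p=2}, namely $i+j=a_1+\cdots+a_r$ and $i=\sum_{n=1}^{r}a_np^{n-1}$. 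As in the proof of Proposition \ref{U_rwithlambda}, the key point is a weight-count collapse argument: each $B$-module $E_n^{i,j}$ carries weight $\lambda + i\alpha$ (since $S^{a_k(k-1)}=a_k p^{k-1}\alpha$ and $\sum a_k p^{k-1}=i$), while the target $E_n^{i+n,j-n+1}$ carries weight $\lambda+(i+n)\alpha$; since all differentials are $T$-equivariant, a nonzero differential forces $n=0$. Hence the spectral sequence degenerates at $E_1$, giving the stated $B$-isomorphism $\opH^n(U_r,\lambda)\cong\bigoplus_{n=a_1+\cdots+a_r}\lambda\otimes S^{a_1}\otimes S^{a_2(1)}\otimes\cdots\otimes S^{a_r(r-1)}$.

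Next, for the ring statement, I would specialize to $\lambda=0$. The degeneration already identifies $\opH^\bullet(U_r,k)$ with the associated graded of its filtration, which is $S^\bullet\otimes S^{\bullet(1)}\otimes\cdots\otimes S^{\bullet(r-1)}$ as a graded vector space and as a $B$-module. To upgrade this to an algebra isomorphism, I would invoke the Andersen–Jantzen computation \cite[2.4]{AJ:1984} exactly as was done in Corollary \ref{U_rcohoring} for $p\ne 2$: their identification of $\opH^\bullet(U_r,k)$ is as an algebra (indeed compatible with the $B$-action), and in characteristic $2$ the relevant algebra is the polynomial ring $S^\bullet\otimes S^{\bullet(1)}\otimes\cdots\otimes S^{\bullet(r-1)}$ with no exterior part — reflecting that in characteristic $2$ the degree-one generators square to (nonzero multiples of) the corresponding degree-two generators rather than to zero. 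Alternatively one can observe that $U\cong\mathbb{G}_a$ and cite \cite[I.4.27]{Jan:2003} for the ring structure of $\opH^\bullet((\mathbb{G}_a)_r,k)$ in characteristic $2$, with the same weight bookkeeping giving the $B$-module grading. Either route makes the multiplicative structure routine once the additive/$B$-module structure is in hand.

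The main obstacle is really only notational: one must keep careful track of which Frobenius twist each symmetric power factor carries, since in the $p=2$ spectral sequence the factor $S^{a_k}$ appears with twist $(k-1)$ rather than with the two interleaved twists that occur when $p\ne 2$. Getting the twists right is what makes the weight of $E_n^{i,j}$ come out to exactly $\lambda+i\alpha$, which is the crux of the collapse. Beyond that, since the proof is explicitly stated to be an analog with most details omitted, I would simply remark that the argument is identical to that of Proposition \ref{U_rwithlambda} and Corollary \ref{U_rcohoring} with \eqref{U_rspectralsequence:p=2} and \eqref{ij-condition:p=2} in place of \eqref{U_rspectralsequence} and \eqref{ij-condition}, and with the exterior generators $y_0,\ldots,y_{r-1}$ absent.
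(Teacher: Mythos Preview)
Your proposal is correct and takes essentially the same approach as the paper: the paper's proof is the single sentence ``Same argument as in the proof of Proposition \ref{U_rwithlambda} is applied for the spectral sequence \eqref{U_rspectralsequence:p=2},'' and you have accurately reconstructed that argument, including the weight computation $E_n^{i,j}\subset (\lambda+i\alpha)$ forcing the collapse and the appeal to \cite[2.4]{AJ:1984} (paralleling Corollary \ref{U_rcohoring}) for the multiplicative statement when $\lambda=0$.
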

\begin{proof}
Same argument as in the proof of Proposition \ref{U_rwithlambda} is applied for the spectral sequence \ref{U_rspectralsequence:p=2}.
\end{proof}
\begin{proposition}\label{B_rwithlambda:p=2}
Suppose $\lambda$ is a dominant weight in $X^+$, i.e., $\lambda=m\omega$ for some non-negative integer $m$. For each $r\ge 1$, there is a $B$-isomorphism 
\[\opH^n(B_r,\lambda)^{(-r)}\cong\bigoplus\left<x_1^{a_1}x_2^{a_2}\cdots x_r^{a_r}\right> \]
where the direct sum is taken over all $a_i$ satisfying the following conditions
\begin{equation}
\begin{cases}\label{ab-condition} 
a_i\in\mathbb{N}~~ \mbox{for all} ~1\le i\le r \\
n=a_1+\cdots+a_r \\
m+2a_1+4a_2+\cdots+2^ra_r\in 2^rX^+.
\end{cases}
\end{equation}
\end{proposition}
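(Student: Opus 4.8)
The plan is to mimic the proof of Proposition~\ref{B_rwithlambda} line for line, using the $p=2$ versions of all the inputs. First I would invoke Theorem~\ref{B_rspectralsequence} in the case $p=2$, whose $E_1$-page is given by \eqref{formE_1:p=2}, taking $M=\lambda$. As noted at the start of the proof of Proposition~\ref{B_rwithlambda}, the collapse of the $U_r$-spectral sequence forces the collapse of the $B_r$-spectral sequence; and the $U_r$-spectral sequence \eqref{U_rspectralsequence:p=2} collapses by the weight argument of Proposition~\ref{U_rwithlambda:p=2} (all differentials are $T$-equivariant, but source and target of a nonzero $d_n$ with $n>0$ carry distinct $T$-weights). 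So $\opH^n(B_r,\lambda)$ is the $T_1$-iterated-invariant module appearing in \eqref{formE_1:p=2}, summed over tuples $(a_1,\dots,a_r)\in\mathbb{N}^r$ with $n=a_1+\cdots+a_r$.

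Next I would unwind the nested invariants. Here there are no exterior factors, so as a $T$-module $S^{a_i}=a_i\alpha$ and the identification $S^{\bullet(i)}=k[x_i]$ with $\deg x_i=2$ and $\mathrm{wt}(x_i)=2^i\alpha$ (recall $\omega=\alpha/2$, so $2^i\alpha=2^{i+1}\omega$) lets one read off the weight of the monomial $x_1^{a_1}\cdots x_r^{a_r}$ in $\opH^n(B_r,\lambda)^{(-r)}$. Applying Lemma~\ref{easylemma} at each stage exactly as in the $p\ne 2$ computation, the iterated $(-1)$-twists and $T_1$-invariants contribute a division by $p=2$ at each of the $r$ steps: the surviving weight of the $(-r)$-twisted cohomology is
\[
\frac{\frac{\frac{\cdots\frac{\lambda}{2}+a_1\alpha}{2}+a_2\alpha}{2}+\cdots+a_{r-1}\alpha}{2}+a_r\alpha,
\]
and this is a dominant weight precisely when the corresponding un-divided expression lies in $2^rX^+$. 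Writing $\lambda=m\omega=\tfrac{m}{2}\alpha$ and clearing the $r$ denominators, the condition becomes $m+2a_1+4a_2+\cdots+2^ra_r\in 2^rX^+$, which is condition \eqref{ab-condition}. Finally, since $U$ is abelian for $SL_2$, it acts trivially on both sides, so the isomorphism is $B$-equivariant via $B/U\cong T$, exactly as in Proposition~\ref{B_rwithlambda}.

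I do not expect a serious obstacle: every ingredient is already in place and the proof is a transcription of the $p\ne 2$ case with the exterior-algebra bookkeeping deleted. The one point that needs a moment's care is the arithmetic of the nested fractions --- making sure the powers of $2$ and the index shift on the $x_i$ line up so that the final congruence reads $m+2a_1+\cdots+2^ra_r\in 2^rX^+$ rather than an off-by-one variant. Since the proof is routine given Proposition~\ref{U_rwithlambda:p=2} and Theorem~\ref{B_rspectralsequence}, I would simply write: ``The proof is identical to that of Proposition~\ref{B_rwithlambda}, using Proposition~\ref{U_rwithlambda:p=2} and the $p=2$ form \eqref{formE_1:p=2} of the $B_r$-spectral sequence in place of their $p\ne2$ counterparts, and omitting the exterior factors.''
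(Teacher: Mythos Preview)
Your proposal is correct and follows exactly the paper's approach: the paper's own proof is the single sentence ``Same argument as in the proof of Proposition~\ref{B_rwithlambda} is applied for the spectral sequence \eqref{formE_1:p=2},'' which is precisely what you outline and what your suggested one-line proof says. One small bookkeeping remark on the point you yourself flag: the nested fraction you display is the naive adaptation of the $p\ne 2$ formula, but the structure of \eqref{formE_1:p=2} differs from \eqref{formE_1} (here $S^{a_1}$ sits innermost and $S^{a_r}$ is inside the final $T_1$-invariant rather than outside), so the correct expression is $\tfrac{1}{2}\bigl(\cdots\tfrac{1}{2}(\tfrac{\lambda+a_1\alpha}{2}+a_2\alpha)+\cdots+a_r\alpha\bigr)$; clearing denominators then gives exactly $m+2a_1+\cdots+2^ra_r\in 2^rX^+$ as you state.
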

\begin{proof}
Same argument as in the proof of Proposition \ref{B_rwithlambda} is applied for the spectral sequence \ref{formE_1:p=2}.
\end{proof}

\begin{theorem}\label{G_rwithlambda:p=2}
Suppose $\lambda=m\omega\in X^+$. Then there are $G$-module isomorphisms
\begin{align*}
\opH^\bullet(G_r,\opH^0(\lambda))^{(-r)}\cong\ind_B^G(\opH^\bullet(B_r,\lambda)^{(-r)})\cong
\bigoplus_{n=0}^\infty\ind_B^G\left(n\omega\right)^{N_r(2,m,n)}.
\end{align*}
where $N_r(2,m,n)$ is the number of $r$-tuples $(a_1,\ldots,a_r)\in\mathbb{N}^r$ satisfying the equation
\[ m+2a_1+4a_2+\cdots+2^ra_r=2^rn.\] 
In particular, if $\lambda=0$, then the first isomorphism is compatible with the cup-products on both sides, i.e., it is an isomorphism of graded $G$-algebras. 
\end{theorem}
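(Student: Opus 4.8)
The plan is to mirror exactly the structure of the proof of Theorem~\ref{G_rwithlambda} in the case $p\ne 2$, since the statement is its $p=2$ analogue and all the supporting results (Proposition~\ref{U_rwithlambda:p=2}, Proposition~\ref{B_rwithlambda:p=2}, Theorem~\ref{B_rspectralsequence} with the $E_1$-page \eqref{formE_1:p=2}) are already available. First I would invoke the Grothendieck-type spectral sequence of Subsection~\ref{G_rspectralsequence}, specialized to $M=\lambda$ a dominant weight, namely
\[
E^{m,n}_2=R^m\ind_B^G\!\left(\opH^n(B_r,\lambda)^{(-r)}\right)\Rightarrow\opH^{n+m}(G_r,\opH^0(\lambda))^{(-r)},
\]
which is legitimate because Kempf vanishing gives $R^i\ind_B^G\lambda=0$ for $i>0$. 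The key collapsing step is to show $E^{m,n}_2=0$ for all $m>0$: by Proposition~\ref{B_rwithlambda:p=2} every weight of $\opH^n(B_r,\lambda)$ is a weight of $\opH^n(U_r,\lambda)$, and by Proposition~\ref{U_rwithlambda:p=2} these are the weights of $\lambda\otimes S^{\bullet}\otimes S^{\bullet(1)}\otimes\cdots\otimes S^{\bullet(r-1)}$, all of which are of the form $\lambda+(\text{nonneg.})\alpha$, hence dominant; a dominant weight has no higher right-derived induction, so the spectral sequence degenerates and
\[
\opH^n(G_r,\opH^0(\lambda))^{(-r)}\cong\ind_B^G\!\left(\opH^n(B_r,\lambda)^{(-r)}\right),
\]
giving the first isomorphism after summing over $n$.

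Next I would feed in the $p=2$ character computation. The counting identity $m+2a_1+4a_2+\cdots+2^ra_r\in 2^rX^+$ from Proposition~\ref{B_rwithlambda:p=2}, together with $X^+=\mathbb{N}\omega$ for $SL_2$, is equivalent to $m+2a_1+\cdots+2^ra_r=2^rn$ for some $n\in\mathbb{N}$; defining $N_r(2,m,n)$ as the number of $r$-tuples $(a_1,\ldots,a_r)\in\mathbb{N}^r$ solving this, the (currently unstated but entirely parallel) $p=2$ analogue of Theorem~\ref{character2} gives a $B$-module isomorphism $\opH^\bullet(B_r,\lambda)^{(-r)}\cong\bigoplus_{n\ge 0}(n\omega)^{N_r(2,m,n)}$. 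Since $\ind_B^G$ is additive, applying it termwise yields $\ind_B^G(\opH^\bullet(B_r,\lambda)^{(-r)})\cong\bigoplus_{n\ge 0}\ind_B^G(n\omega)^{N_r(2,m,n)}$, which is the second isomorphism.

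For the final assertion, when $\lambda=0$ I would quote the same argument used in Theorem~\ref{G_rwithlambda}: the isomorphism $\opH^\bullet(G_r,k)^{(-r)}\cong\ind_B^G(\opH^\bullet(B_r,k)^{(-r)})$ is compatible with the cup product structure by \cite[Remark~3.2]{AJ:1984} (the Andersen--Jantzen comparison, which is characteristic-free), so it is an isomorphism of graded $G$-algebras. The one place needing a little care — and the main (minor) obstacle — is verifying that the dominance-of-weights argument genuinely goes through for $p=2$: here the exterior-algebra factors $\Lambda^{b_j(j-1)}$ present in the odd-$p$ spectral sequence are absent, so one must check directly that the $E_1$-terms in \eqref{formE_1:p=2}, built only from symmetric powers $S^{a_i}$ with weights $2^i a_i\alpha$ (after the relevant twists/untwists), contribute only weights $\lambda+(\text{nonneg.})\alpha$; this is immediate from the explicit weight computation already carried out in Proposition~\ref{B_rwithlambda:p=2}, so no genuine new difficulty arises. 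Everything else is a routine transcription of the $p\ne 2$ proof with $\Lambda$-factors deleted and powers of $p$ replaced by powers of $2$.
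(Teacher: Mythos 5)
Your proposal is correct and follows essentially the route the paper intends: the paper omits the proof of this $p=2$ theorem precisely because it is the same argument as Theorem \ref{G_rwithlambda}, namely the spectral sequence of Subsection \ref{G_rspectralsequence}, collapse via dominance of the weights of $\opH^n(B_r,\lambda)$ (read off from the $p=2$ $U_r$-computation) and Kempf vanishing, the counting from Proposition \ref{B_rwithlambda:p=2}, and \cite[Remark 3.2]{AJ:1984} for the cup-product statement when $\lambda=0$. No discrepancies to note.
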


\section{An algorithm to compute $N_r(p,m,n)$}\label{character multiplicity} 
\subsection{} Results in the preceding section indicate that the number $N_r(p,m,n)$ plays an important role in the cohomology of $B_r$ and $G_r$. It is closely related to the problem of counting integral lattice points in a polytope. In particular, let $P_r$ be a polytope in $\mathbb{R}^r$ determined by the following equations:
\[
\begin{cases}
\frac{m}{2}+y_1+(x_1+y_2)p+\cdots+(x_{r-1}+y_r)p^{r-1}+x_rp^r=\frac{n}{2}p^r,\\
0\le x_i\le\frac{n}{2}p^r ~\mbox{for each}~1\le i\le r, \\
0\le y_i\le 1~\mbox{for each}~1\le i\le r.
\end{cases}
\]
Then for $p\ne 2$ and $m,n\ge 0$ we have $N_r(p,m,n)=|P_r\cap\Z^r|$ for each $r\ge 1$. So one can use Barvinok's algorithm to compute the right-hand side. However, this algorithm is getting slow when $r$ is big due to many complicated subalgorithms and computations involving Complex Analysis. Barvinok actually proved that the algorithm terminates after a polynomial time for a fixed dimension depending on the data for vertices of the polytope (see \cite{Bar:1994} for further details). 

\subsection{} In this subsection, we sketch an alternative program to calculate the number $N_r(p,m,n)$ with given non-negative integers $m,n$; $r\ge 2$ and a prime $p>2$.
We start with an algorithm to compute the number of solutions $(c_1,...,c_r)$ satisfying the equation
\begin{equation}\label{c-eqn}
np^r=(c_1+d_1)p+...+(c_r+d_r)p^r
\end{equation}
with given nonnegative integer $d_i$ for each $i$.
Define recursively a family of functions $N_i:p^i\mathbb{N}\rightarrow\mathbb{N}$ as follow:
\begin{itemize}
\item $N_i(0)=1$ for each $1\le i\le r$
\item For each $n\in\mathbb{N}$, $N_1(np)=\left\{ \begin{array}{rcl} 0 & \mbox{if} & d_1>n \\
1 & \mbox{otherwise} \end{array}\right. $
\item For each $n\in\mathbb{N}$, $N_2(np^2)=\left\{ \begin{array}{rcl} 0 & \mbox{if} & d_2>n \\
\Sigma_{j=0}^{n-d_2}N_1(jp^2) & \mbox{otherwise} \end{array}\right. $
\item For each $n\in\mathbb{N}$, $N_{i+1}(np^{i+1})=\left\{ \begin{array}{rcl} 0 & \mbox{if} & d_{i+1}>n \\
\Sigma_{j=0}^{n-d_{i+1}}N_i(jp^{i+1}) & \mbox{otherwise} \end{array}\right. $
\end{itemize}
\begin{theorem}
For each $n\in\mathbb{N}$, the number of solutions of equation \eqref{c-eqn} is $N_r(np^r)$.
\end{theorem}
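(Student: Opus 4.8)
The plan is to prove, by induction on $i$, a statement slightly stronger than the theorem, from which the theorem follows by taking $i=r$: \emph{for every $i\ge 1$ and every $n\in\mathbb{N}$, the value $N_i(np^i)$ equals the number of tuples $(c_1,\dots,c_i)\in\mathbb{N}^i$ satisfying $np^i=\sum_{k=1}^{i}(c_k+d_k)p^k$.} The reason to phrase it this way is that the recursion defining $N_{i+1}$ calls $N_i$ at many different arguments, so one needs the count at level $i$ for all values of the "target", not just for the single value occurring in \eqref{c-eqn}.

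For the base case $i=1$, one notes that $np=(c_1+d_1)p$ is equivalent to $c_1=n-d_1$, which has a unique solution $c_1\in\mathbb{N}$ exactly when $n\ge d_1$ and none otherwise; this matches the piecewise definition of $N_1(np)$, the separate convention $N_1(0)=1$ recording the trivial solution of the degenerate equation.

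For the inductive step, assuming the statement at level $i$, one argues as follows. In any solution of $np^{i+1}=\sum_{k=1}^{i+1}(c_k+d_k)p^k$ the top term satisfies $(c_{i+1}+d_{i+1})p^{i+1}\le np^{i+1}$, so $c_{i+1}+d_{i+1}\le n$; hence there is no solution when $d_{i+1}>n$, in agreement with $N_{i+1}(np^{i+1})=0$. When $d_{i+1}\le n$, subtracting $(c_{i+1}+d_{i+1})p^{i+1}$ from both sides shows that a value of $c_{i+1}$ can be completed to a solution only if $j:=n-d_{i+1}-c_{i+1}\ge 0$, i.e. $c_{i+1}\in\{0,1,\dots,n-d_{i+1}\}$, and that the remaining variables must then satisfy $\sum_{k=1}^{i}(c_k+d_k)p^k=jp^{i+1}$. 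Since $jp^{i+1}=(jp)p^i$, this is exactly the level-$i$ equation with $jp$ in place of $n$, so by the inductive hypothesis it has $N_i(jp^{i+1})$ solutions $(c_1,\dots,c_i)$. As $c_{i+1}\mapsto j$ is a bijection of $\{0,1,\dots,n-d_{i+1}\}$ onto itself, summing over $c_{i+1}$ yields
\[ \#\bigl\{(c_1,\dots,c_{i+1})\in\mathbb{N}^{i+1}\ :\ np^{i+1}=\textstyle\sum_{k=1}^{i+1}(c_k+d_k)p^k\bigr\}=\sum_{j=0}^{\,n-d_{i+1}}N_i(jp^{i+1})=N_{i+1}(np^{i+1}), \]
which closes the induction.

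The argument is elementary, and I expect no genuine obstacle; the two points that need a little care are exactly the two flagged above. First, the inductive hypothesis must be stated uniformly over the argument of $N_i$ — the recursion evaluates $N_i$ at the higher multiples $jp^{i+1}$ of $p^i$, not merely at the value relevant to the final equation, so an induction on $r$ alone would not close. Second, one must verify that the change of variable $c_{i+1}\mapsto j=n-d_{i+1}-c_{i+1}$ is a bijection onto precisely the summation range $\{0,\dots,n-d_{i+1}\}$ and that, after removing the top term, the residual equation is honestly the level-$i$ counting problem with parameter $jp$; everything else is bookkeeping.
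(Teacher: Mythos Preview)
Your proof is correct and follows essentially the same route as the paper's: induction on the level, peel off the top variable $c_{i+1}$, and apply the inductive hypothesis to the residual equation, then sum. You are simply more explicit than the paper about phrasing the inductive hypothesis uniformly in the target value (so that evaluating $N_i$ at the arguments $jp^{i+1}$ is justified) and about the bijection between choices of $c_{i+1}$ and the summation index $j$; the paper compresses all of this into two sentences.
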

\begin{proof}
We prove by induction on $r$. It is trivial for $r=1$. Suppose it is true for $r-1$. From equation \eqref{c-eqn}, we have $n-d_r+1$ choices for $c_r\in\{d_r,d_r+1,..,n\}$. For each $c_r=i$, the number of solutions is $N_{r-1}(np^r-ip^r)=N_{r-1}[p(n-i)p^{r-1}]$. Summing all the terms and using the recursive formula, we get the total number of solutions $N_r(np^r)$.
\end{proof}
Now the algorithm consists of following steps:
\begin{itemize}
\item If $m$ is even and $n$ is odd, or if $m$ is odd and $n$ is even then the function returns $0$.
\item If both $m,n$ are even, then let $m'=\frac{m}{2}$ and $n'=\frac{n}{2}$. The equation \ref{lambda-eqn} hence becomes
\begin{equation}
m'+b_1+(a_1+b_2)p+...+(a_{r-1}+b_r)p^{r-1}+a_rp^r=n'p^r.
\end{equation}
\begin{enumalph}
\item Write $m'$ into base $p$-expansion. Suppose $m'=d_0+d_1p+...+d_hp^h$ for some $h\in\mathbb{N}$. Note that if $b_1+m'>n'p^r$ then, of course, $N_r(p,m,n)=0$.
\item For each $r$-tuple $(b_1,...,b_r)\in\{0,1\}^r$, we use previous theorem to compute number $N_r(m',n',b_1,...,b_r)$ of solutions for
\[ d_0+b_1+(a_1+d_1+b_2)p+...+(a_{r-1}+d_{r-1}+b_r)p^{r-1}+(a_r+d_r)p^r=(n'-d_{r+1}p-...-d_hp^{h-r})p^r. \]
\item We have $N_r(p,m,n)=\Sigma_{b_1,..,b_r}N_r(m',n',b_1,...,b_r)$.
\end{enumalph}
\item If both $m$ and $n$ are odd, then repeat Step 2 with $m'=\frac{m+1}{2}$ and $n'=\frac{n+p^r}{2}$.
\end{itemize}
\begin{remark}
This algorithm was coded and compared with the one coded by applying Erhart's theory. The tables in Appendix \ref{appendix} shows that our program runs faster than the other.
\end{remark}

\section{Reduced rings and geometry}\label{reduced cohomology}
\subsection{Reduced $B_r$-cohomology ring}\label{reduceB_rcoho} 
In Theorem \ref{character2}, we computed $\opH^\bullet(B_r,k)$ as a $B$-module. Describing its ring structure, however, is extremely hard for big $r$ (see \cite{AJ:1984}). By looking at the reduced part, we can compute $\opH^\bullet(B_r,k)$ as a finitely generated $\opH^\bullet(U_r,k)_{\red}$-module. We first need the following observation.
\begin{lemma}\label{lemma of reduceB_r}
Given any $T$-algebra $M$, there is an isomorphism
\[ (M^{T_r})_{\red}\cong (M_{\red})^{T_r}. \]
\end{lemma}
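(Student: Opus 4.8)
The plan is to exploit the fact that the nilradical of a $T$-algebra is a $T$-stable ideal, so that passing to the reduced quotient and taking $T_r$-invariants commute in a way that can be made precise via exactness properties of the invariants functor on diagonalizable group schemes. First I would observe that $\rad M$ is a $T$-stable ideal of $M$: any automorphism of $M$ as a ring permutes nilpotent elements, and since the $T$-action is by algebra automorphisms, $\rad M$ is preserved; hence $\rad M$ carries an induced $T$-module structure and $M_{\red} = M/\rad M$ is a $T$-algebra. Restricting the action along $T_r \hookrightarrow T$, all of this holds for $T_r$ as well.

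Next I would apply the $T_r$-invariants functor to the short exact sequence of $T_r$-modules
\[
0 \longrightarrow \rad M \longrightarrow M \longrightarrow M_{\red} \longrightarrow 0.
\]
Since $T_r$ is a diagonalizable (in fact infinitesimal) group scheme, taking $T_r$-fixed points is exact — concretely, $(-)^{T_r}$ picks out the sum of weight spaces whose weights lie in $p^r X$ (equivalently, it is a direct summand functor on the category of $T$-modules, given by projection onto a subset of the weight-space decomposition). Therefore the sequence
\[
0 \longrightarrow (\rad M)^{T_r} \longrightarrow M^{T_r} \longrightarrow (M_{\red})^{T_r} \longrightarrow 0
\]
is exact, which identifies $(M_{\red})^{T_r}$ with $M^{T_r}/(\rad M)^{T_r}$ as a ring (the maps here are algebra homomorphisms, so this is an isomorphism of $k$-algebras, and it is $T$-equivariant via $T/T_r$).

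It then remains to identify $(\rad M)^{T_r} = (\rad M) \cap M^{T_r}$ with $\rad(M^{T_r})$. The inclusion $(\rad M) \cap M^{T_r} \subseteq \rad(M^{T_r})$ is immediate, since a nilpotent element of $M$ lying in the subring $M^{T_r}$ is a fortiori nilpotent in $M^{T_r}$. For the reverse inclusion, a nilpotent element of $M^{T_r}$ is nilpotent in $M$ and lies in $M^{T_r}$, hence lies in $(\rad M)\cap M^{T_r}$; so in fact the two ideals coincide on the nose. Combining, $(M_{\red})^{T_r} \cong M^{T_r}/\rad(M^{T_r}) = (M^{T_r})_{\red}$, as desired. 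The main point requiring care — and the only step that is not essentially formal — is the exactness of $(-)^{T_r}$, i.e.\ that taking $T_r$-invariants is compatible with quotients; this is where the diagonalizability of $T_r$ (its invariants functor being a projection onto a subsum of weight spaces, cf.\ \cite[I.2.11, I.9.5]{Jan:2003}) is used, and everything else is a routine check that the maps involved are algebra homomorphisms and $T/T_r$-equivariant.
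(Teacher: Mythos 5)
Your proof is correct and follows essentially the same route as the paper: apply the exact functor $(-)^{T_r}$ (exact since $T_r$ is diagonalizable) to the sequence $0\to\rad M\to M\to M_{\red}\to 0$ and then identify $(\rad M)^{T_r}$ with $\rad(M^{T_r})$, both being $\rad(M)\cap M^{T_r}$. Your added remarks on $T$-stability of $\rad M$ and equivariance are harmless elaborations of the same argument.
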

\begin{proof}
Note that $M_{\red}=M/\rad{M}$, we consider the short exact sequence 
\[ 0\rightarrow\rad{M}\rightarrow M\rightarrow M_{\red}\rightarrow 0.\]
As $T_r$ is diagonalisable, $(-)^{T_r}$ is exact, so we obtain that
\[ 0\rightarrow(\rad{M})^{T_r}\rightarrow(M)^{T_r}\rightarrow(M_{\red})^{T_r}\rightarrow 0.\]
On the other hand, we know that $(M^{T_r})_{\red}=M^{T_r}/\rad{(M^{T_r})}$. So we just need to check that $(\rad{M})^{T_r}=\rad{(M^{T_r})}$ which is true since both equal to $\rad{(M)}\cap M^{T_r}$.
\end{proof}
From Corollary \ref{U_rcohoring} (respectively Proposition \ref{U_rwithlambda:p=2} in the case $p=2$), we can identify $\opH^\bullet(U_r,k)_{\red}$ with the polynomial algebra $k[x_1,\ldots,x_r]$ where each $x_i$ is of weight $p^i\alpha$ (respectively $p^{i-1}\alpha$). The preceding lemma and Corollary \ref{U_rcohoring} imply that 
\begin{align*}
\opH^\bullet(B_r,k)_{\red} &\cong\left(\opH^{\bullet}(U_r,k)^{T_r}\right)_{\red}\cong\left(\opH^{\bullet}(U_r,k)_{\red}\right)^{T_r}\\
&\cong\left(S^{\bullet(1)}\otimes\cdots\otimes S^{\bullet(r)}\right)^{T_r}\\
&\cong\left(k[x_1,\ldots,x_r]\right)^{T_r}.
\end{align*}
As a $B$-module, this reduced cohomology ring can be represented in terms of monomials as follows.

\begin{theorem}\label{B_rcohoreduce}
For $r\ge 1$, there is a $B^{(r)}$-module isomorphism
\begin{equation}\label{iso-of-B_rcohoreduce}
\opH^\bullet(B_r,k)_{\red}\cong\bigoplus\left< x_1^{a_1}, x_2^{a_2},\ldots,x_r^{a_r}\right>
\end{equation}
where the $a_i$ are non-negative integers satisfying 
\begin{equation}\label{reduced-eqn}
a_1+a_2p+\cdots+a_rp^{r-1}=np^{r-1}
\end{equation}
for some $n\in\mathbb{N}$. Furthermore, let $R=k[x_1^{p^{r-1}},x_2^{p^{r-2}},\ldots,x_r]$, then the reduced ring $\opH^\bullet(B_r,k)_{\red}$ is a finitely free $R$-module with the basis $\mathfrak{B}_r=\{ x_1^{a_1}, x_2^{a_2},\ldots,x_{r-1}^{a_{r-1}}\}$ where for each $i=1,\ldots,r-1$, $0\le a_i< p^{r-i}$ and satifies the equation \eqref{reduced-eqn}.
\end{theorem}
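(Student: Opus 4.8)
The plan is to build on the identification $\opH^\bullet(B_r,k)_{\red}\cong (k[x_1,\dots,x_r])^{T_r}$ recorded just before the statement (obtained from Lemma~\ref{lemma of reduceB_r} and Corollary~\ref{U_rcohoring}, using that for $p=2$ the ring $\opH^\bullet(U_r,k)$ is already reduced), and then to describe the invariant subring $(k[x_1,\dots,x_r])^{T_r}$ completely by a base-$p$ bookkeeping on monomials. Since $G=SL_2$ we have $X=\Z\omega$ with $\alpha=2\omega$, and the character group of the diagonalisable group $T_r$ is $X/p^rX$; thus a monomial $x_1^{a_1}\cdots x_r^{a_r}$ is $T_r$-fixed exactly when its $T$-weight lies in $p^rX$.

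First I would compute that weight. For $p\ne 2$ the variable $x_i$ carries weight $p^i\alpha$, so $x_1^{a_1}\cdots x_r^{a_r}$ has weight $(a_1p+\cdots+a_rp^r)\alpha=2(a_1p+\cdots+a_rp^r)\omega$; as $2$ is a unit modulo $p^r$, this lies in $p^rX$ if and only if $a_1p+\cdots+a_rp^r\equiv 0\pmod{p^r}$, equivalently $a_1+a_2p+\cdots+a_rp^{r-1}=np^{r-1}$ for some $n\in\mathbb N$, which is condition~\eqref{reduced-eqn}. For $p=2$ the variable $x_i$ has weight $2^{i-1}\alpha=2^i\omega$ instead, and the analogous computation (dividing once by $2$) produces the identical condition~\eqref{reduced-eqn}. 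Decomposing $(k[x_1,\dots,x_r])^{T_r}$ into its $T$-weight lines, indexed precisely by the tuples $(a_1,\dots,a_r)$ satisfying~\eqref{reduced-eqn}, yields~\eqref{iso-of-B_rcohoreduce}; the $B^{(r)}$-equivariance is inherited from the identification recalled above.

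For the freeness statement I would first observe that $R=k[x_1^{p^{r-1}},x_2^{p^{r-2}},\dots,x_{r-1}^{p},x_r]$ lies inside $\opH^\bullet(B_r,k)_{\red}$, since each generator $x_i^{p^{r-i}}$ has weight $p^r\alpha$ (respectively $2^{r-1}\alpha=2^r\omega$ for $p=2$), which lies in $p^rX$, hence is $T_r$-invariant. Next, given an admissible monomial $x_1^{a_1}\cdots x_r^{a_r}$, I would divide with remainder: for $1\le i\le r-1$ write $a_i=q_ip^{r-i}+s_i$ with $0\le s_i<p^{r-i}$, and set $q_r=a_r$, so that
\[ x_1^{a_1}\cdots x_r^{a_r}=\bigl((x_1^{p^{r-1}})^{q_1}\cdots(x_{r-1}^{p})^{q_{r-1}}x_r^{q_r}\bigr)\cdot\bigl(x_1^{s_1}\cdots x_{r-1}^{s_{r-1}}\bigr), \]
with the first factor in $R$. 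The one point that requires genuine care --- and, I expect, the only non-formal step --- is that the remainder $x_1^{s_1}\cdots x_{r-1}^{s_{r-1}}$ is again $T_r$-invariant, i.e.\ $(s_1,\dots,s_{r-1},0)$ satisfies~\eqref{reduced-eqn}: this holds because $a_ip^{i-1}-s_ip^{i-1}=q_ip^{r-i}\cdot p^{i-1}=q_ip^{r-1}$ is divisible by $p^{r-1}$ for every $i\le r-1$, whence $\sum_{i=1}^{r-1}s_ip^{i-1}\equiv\sum_{i=1}^{r-1}a_ip^{i-1}\equiv 0\pmod{p^{r-1}}$. Thus the remainder is one of the monomials $x_1^{a_1}\cdots x_{r-1}^{a_{r-1}}$ (with $0\le a_i<p^{r-i}$ subject to~\eqref{reduced-eqn}) comprising $\mathfrak B_r$.

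Finally I would assemble these pieces. The map sending an admissible monomial $x_1^{a_1}\cdots x_r^{a_r}$ to the pair formed by its $R$-factor and its $\mathfrak B_r$-factor is a bijection between the monomial $k$-basis of $\opH^\bullet(B_r,k)_{\red}$ and the product of the monomial basis of $R$ with $\mathfrak B_r$ --- surjectivity and injectivity both being immediate from the uniqueness of division with remainder --- so $\opH^\bullet(B_r,k)_{\red}=\bigoplus_{b\in\mathfrak B_r}Rb$, the sum direct and each $Rb$ free of rank one over the polynomial ring $R$. Since the exponents occurring in $\mathfrak B_r$ satisfy $0\le a_i<p^{r-i}$, the set $\mathfrak B_r$ is finite, so $\opH^\bullet(B_r,k)_{\red}$ is a finitely generated free $R$-module, as claimed. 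The essential content is the congruence check in the division step; everything else is monomial bookkeeping and the representation theory of $T_r$.
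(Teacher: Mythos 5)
Your proof is correct and follows essentially the same route as the paper: both pass to the identification $\opH^\bullet(B_r,k)_{\red}\cong k[x_1,\ldots,x_r]^{T_r}$, characterize the invariant monomials by the congruence \eqref{reduced-eqn} (the paper does this by setting $m=b_1=\cdots=b_r=0$ in Proposition \ref{B_rwithlambda}, you by computing the $T$-weights directly), and then factor each invariant monomial uniquely as an element of $R$ times a monomial in $\mathfrak{B}_r$. The only difference is that you spell out the division-with-remainder step and the check that the remainder is again $T_r$-invariant, details the paper dismisses with ``it can be verified''.
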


\begin{proof}
The right-hand side is obtained by setting $m=b_1=b_2=\cdots=b_r=0$ in the Proposition \ref{B_rwithlambda} (respectively setting $m=0$ in Proposition \ref{B_rwithlambda:p=2}). Now observe that every tuple $(m_1p^{r-1},m_2p^{r-2},\ldots,m_r)$ with $m_i$ a non-negative integer is a solution for \eqref{reduced-eqn}. It follows that $\opH^\bullet(B_r,k)_{\red}$ contains $R$ as a subring. Moreover, it can be verified that every monomial in the right-hand side of the isomorphism \eqref{iso-of-B_rcohoreduce} is uniquely written as a product of an element in $R$ and a monomial in $\mathfrak{B}_r$. The fact that $\mathfrak{B}_r$ is finite completes our proof. 

\end{proof}


\begin{corollary}\label{B_rscheme}
For each $r\ge 1$, there is a homeomorphism from $\spec k[x_1,\ldots,x_r]^{(r)}$ onto $\spec\opH^\bullet(B_r,k)_{\red}$.
\end{corollary}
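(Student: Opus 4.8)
The plan is to identify the embedding $\opH^\bullet(B_r,k)_{\red}\hookrightarrow\opH^\bullet(U_r,k)_{\red}$ used in Subsection \ref{reduceB_rcoho} as an $F$-isomorphism, and then invoke the general principle recalled in Section \ref{notation} after \cite[5.4]{Ben:1998}: an inclusion $R\hookrightarrow S$ of finitely generated commutative $k$-algebras with $F_j(S)\subseteq R$ for some $j\ge 1$ induces a homeomorphism $\spec S\xrightarrow{\sim}\spec R$. Concretely, put $S:=\opH^\bullet(U_r,k)_{\red}$, which by Corollary \ref{U_rcohoring} is the polynomial ring $k[x_1,\dots,x_r]^{(r)}$ occurring on the left of the statement (with $x_i$ of degree $2$ and weight $p^i\alpha$), and put $R':=\opH^\bullet(B_r,k)_{\red}$, which by Theorem \ref{B_rcohoreduce} is the subalgebra of $S$ spanned by the monomials $x_1^{a_1}\cdots x_r^{a_r}$ with $a_1+a_2p+\cdots+a_rp^{r-1}\in p^{r-1}\mathbb{N}$. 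The same theorem presents $R'$ as a finite free module over the polynomial subring $R:=k[x_1^{p^{r-1}},x_2^{p^{r-2}},\dots,x_r]$, so $R'$ is indeed a finitely generated commutative $k$-algebra.

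The one computational point is that $F_{r-1}(S)\subseteq R'$, and this is immediate: each generator of $S$ satisfies $x_i^{p^{r-1}}=\bigl(x_i^{p^{r-i}}\bigr)^{p^{i-1}}\in R\subseteq R'$, and since $s\mapsto s^{p^{r-1}}$ is a ring homomorphism on $S$ it follows that $s^{p^{r-1}}\in R'$ for every $s\in S$. (Equivalently, the $(r-1)$st Frobenius power of $x_1^{a_1}\cdots x_r^{a_r}$ is $x_1^{a_1p^{r-1}}\cdots x_r^{a_rp^{r-1}}$, and $\sum_i a_ip^{r-1}p^{i-1}=p^{r-1}\sum_i a_ip^{i-1}\in p^{r-1}\mathbb{N}$.) Hence $F_{r-1}(S)\subseteq R'\subseteq S$ is a chain of finitely generated commutative $k$-algebras, the inclusion $R'\hookrightarrow S$ is an $F$-isomorphism, and the induced map $\spec S\to\spec R'$ is precisely the homeomorphism from $\spec k[x_1,\dots,x_r]^{(r)}$ onto $\spec\opH^\bullet(B_r,k)_{\red}$ asserted by the corollary. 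The argument is insensitive to the value of $p$: for $p=2$ one uses Propositions \ref{U_rwithlambda:p=2} and \ref{B_rwithlambda:p=2} and the corresponding case of Theorem \ref{B_rcohoreduce}, but the Frobenius-power containment is verified in the same way.

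I do not anticipate a genuine obstacle: the proof is bookkeeping rather than a new idea. The two points deserving a line of care are (i) confirming that the ring denoted $k[x_1,\dots,x_r]^{(r)}$ in the statement is literally $\opH^\bullet(U_r,k)_{\red}$ under Corollary \ref{U_rcohoring}, so that the $r$-fold Frobenius twist is a twist of the $B$-module structure only — it changes neither the underlying commutative ring nor its Zariski spectrum, since the $p^r$-power map is an automorphism of the algebraically closed (hence perfect) field $k$ — and (ii) the finite generation of $\opH^\bullet(B_r,k)_{\red}$, which is built into Theorem \ref{B_rcohoreduce}. If one prefers to bypass $\opH^\bullet(U_r,k)_{\red}$ entirely, the very same reasoning applies to the chain $k[x_1^{p^{r-1}},\dots,x_r]\subseteq\opH^\bullet(B_r,k)_{\red}\subseteq k[x_1,\dots,x_r]^{(r)}$ read off directly from Theorem \ref{B_rcohoreduce}.
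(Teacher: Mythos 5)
Your proposal is correct and follows essentially the same route as the paper: the paper also sandwiches $\opH^\bullet(B_r,k)_{\red}$ between a Frobenius-power image (it uses the $r$-th power $f\mapsto f^{p^r}$, which lands inside $R=k[x_1^{p^{r-1}},\ldots,x_r]$, rather than your $(r-1)$-st power) and $k[x_1,\ldots,x_r]^{(r)}$, and then invokes the $F$-isomorphism principle of \cite[5.4]{Ben:1998}. The difference in which Frobenius iterate is used is immaterial.
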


\begin{proof}
We recall the $r$-th Frobenius homomorphism of rings
\begin{align*}
\calF: k[x_1,\ldots,x_r]^{(r)} &\rightarrow k[x_1,\ldots,x_r] \\
\calF(f)&\longmapsto f^{p^{r}}
\end{align*}
for all $f\in k[x_1,\ldots,x_r]$. Observe that 
\[ \Hbul(B_r,k)_{\red}\subseteq k[x_1,\ldots,x_r]^{(r)}\]
are finitely generated commutative algebras. Note further that $\im\calF=k[x_1^{p^{r}},\ldots,x_r^{p^{r}}]$ lies in the ring $R=k[x_1^{p^{r-1}},x_2^{p^{r-2}},\ldots,x_r]$; hence is a subalgebra of $\Hbul(B_r,k)_{\red}$. The inclusions 
\[ \im\calF\subseteq \Hbul(B_r,k)_{\red}\subseteq k[x_1,\ldots, x_r]^{(r)} \]
implies that the morphism $\mathfrak{i}:\spec k[x_1,\ldots, x_r]^{(r)}\to \spec\Hbul(B_r,k)_{\red}$ is an $F$-isomorphism. Hence, it is a homeomorphism.
\end{proof}

\begin{remark}
One can easily construct an example where this morphism is not an isomorphism. For instance, when $p\ne 2$, from the computation of Andersen and Jantzen \cite[2.4]{AJ:1984} we have $\spec\opH^\bullet(B_2,k)_{\red}=\spec k[x_1^p,x_2]$ which is obviously not isomorphic to $\spec k[x_1,x_2]$ as there is no degree one morphism from one to the other.
  
\end{remark}

\subsection{Reduced $G_r$-cohomology ring} We first develop some techniques to compute reduced rings in general context.
\begin{lemma}\label{nilradical}
Let $k$ be a perfect field. Suppose $G$ is a split reductive group over $k$ and let $M$ be a $k$-algebra. Then we have $\rad(k[G]\otimes M)=k[G]\otimes\rad{M}$.  
\end{lemma}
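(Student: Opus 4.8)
The plan is to reduce the statement to a base-change fact about perfect fields. Write $A = k[G]$ and recall that since $G$ is split reductive over the perfect field $k$, the ring $A$ is a finitely generated $k$-algebra which is geometrically reduced (in fact geometrically integral if $G$ is connected, but integrality is not needed). The key input I would invoke is the standard commutative-algebra statement: if $A$ is a geometrically reduced $k$-algebra and $M$ is any $k$-algebra, then $\rad(A\otimes_k M) = A\otimes_k\rad(M)$; equivalently, tensoring a reduced $k$-algebra with a geometrically reduced one stays reduced, and nilpotents of a tensor product come only from the ``non-geometric'' factor.

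The steps, in order, would be as follows. First, the inclusion $A\otimes\rad(M)\subseteq\rad(A\otimes M)$ is immediate: any element $\sum a_i\otimes n_i$ with $n_i$ nilpotent is a finite sum of nilpotents in the commutative ring $A\otimes M$, hence nilpotent. Second, for the reverse inclusion, pass to the quotient: $(A\otimes M)/(A\otimes\rad M)\cong A\otimes(M/\rad M) = A\otimes M_{\red}$, so it suffices to show that $A\otimes M_{\red}$ is reduced whenever $M_{\red}$ is reduced. Third, prove this reducedness claim. Since $M_{\red}$ is a reduced $k$-algebra it embeds into a product of fields (its total quotient ring, or more simply into $\prod_{\mathfrak p} \kappa(\mathfrak p)$ over minimal primes after localizing), so by left-exactness of $A\otimes_k(-)$ it is enough to check that $A\otimes_k K$ is reduced for every field extension $K/k$. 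But $A = k[G]$ with $G$ split reductive is geometrically reduced by construction (the split group scheme is smooth over $k$, hence $k[G]\otimes_k\bar k$ is reduced, and reducedness after one perfect — indeed algebraically closed — extension gives reducedness after every extension), so $A\otimes_k K$ is reduced for all $K/k$. Chaining these gives $\rad(A\otimes M)\subseteq A\otimes\rad M$, and combined with the first step the equality follows.

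I would expect the only real subtlety to be the geometric reducedness of $k[G]$ and the reduction ``reduced after $\bar k$ $\Rightarrow$ reduced after every extension,'' which is where perfectness of $k$ is genuinely used: over a perfect field, reduced is the same as geometrically reduced, so it is enough to note that $G$ is smooth (being a split reductive group scheme over $k$) and hence $k[G]$ is geometrically reduced. Everything else — the easy inclusion, the quotient identification $A\otimes(M/\rad M)\cong (A\otimes M)/(A\otimes\rad M)$ using flatness of $A$ over $k$, and the embedding of a reduced Noetherian ring into a product of its residue fields at minimal primes — is routine. One mild point to be careful about: $M$ is only assumed to be a $k$-algebra, not necessarily Noetherian, but $\rad M$ still makes sense as the nilradical and $M/\rad M$ is reduced, and the argument via ``embed into a filtered union / product of fields'' or directly via checking on elements goes through; alternatively one can argue elementwise by expressing a putative nilpotent of $A\otimes M$ in terms of finitely many elements of $M$, replacing $M$ by a finitely generated subalgebra and reducing to the Noetherian case.
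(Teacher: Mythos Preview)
Your proposal is correct and follows essentially the same route as the paper: tensor the short exact sequence $0\to\rad M\to M\to M_{\red}\to 0$ with $k[G]$, identify the quotient $(k[G]\otimes M)/(k[G]\otimes\rad M)$ with $k[G]\otimes M_{\red}$, and conclude by showing the latter is reduced. The only difference is that the paper simply invokes the reducedness of $k[G]\otimes M_{\red}$ over a perfect field as ``well-known,'' whereas you unpack this via geometric reducedness of $k[G]$ and an embedding of $M_{\red}$ into a product of fields; your version is more detailed but not a genuinely different approach.
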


\begin{proof}
Consider the short exact sequence
\[ 0\to\rad{M}\to M\to M_{\red}\to 0. \]
As $G$ is reductive, the coordinate algebra $k[G]$ is free over $k$ \cite[II.1.1]{Jan:2003}. So we have the following sequence
\[ 0\to k[G]\otimes\rad{M}\to k[G]\otimes M\to k[G]\otimes M_{\red}\to 0 \]
is exact. It follows that 
\[ k[G]\otimes M_{\red}\cong\frac{k[G]\otimes M}{k[G]\otimes\rad{M}}.\]
On the other hand, since $k[G]$ is reduced, it is well-known that the ring $k[G]\otimes M_{\red}$ is reduced when $k$ is perfect. This implies that $k[G]\otimes\rad{M}=\rad{(k[G]\otimes M)}$. 
\end{proof}

\begin{lemma}\label{lemma of reduceG_r}
Suppose the same assumptions on $k$ and $G$ as in Lemma \ref{nilradical}. Given a $B$-algebra $M$ and suppose that all weights of $\rad{M}$ are dominant. Then, as an algebra, we always have $\left[\ind_B^GM\right]_{\red}\cong\ind_B^G(M_{\red})$.
\end{lemma}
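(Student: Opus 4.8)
The plan is to mimic the proof of Lemma \ref{nilradical}, but now working $B$-equivariantly and then applying $\ind_B^G$, which is left exact. First I would start from the short exact sequence of $B$-algebras
\[ 0\to\rad{M}\to M\to M_{\red}\to 0. \]
Since $\ind_B^G$ is left exact, applying it gives an exact sequence
\[ 0\to\ind_B^G(\rad{M})\to\ind_B^GM\to\ind_B^G(M_{\red}), \]
and to identify the cokernel I would invoke the hypothesis that all weights of $\rad{M}$ are dominant: by Kempf vanishing $R^i\ind_B^G(\rad{M})=0$ for $i>0$, so in fact the sequence
\[ 0\to\ind_B^G(\rad{M})\to\ind_B^GM\to\ind_B^G(M_{\red})\to 0 \]
is exact. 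Hence $\ind_B^GM/\ind_B^G(\rad{M})\cong\ind_B^G(M_{\red})$ as $G$-modules, and since all maps here are algebra maps this is an isomorphism of algebras.

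It remains to check two things: that $\ind_B^G(\rad{M})$ is a nil ideal of $\ind_B^GM$, and that $\ind_B^G(M_{\red})$ is a reduced ring; together these force $\ind_B^G(\rad{M})=\rad(\ind_B^GM)$ and complete the proof. For the first point, $\ind_B^G(\rad{M})$ sits inside $\ind_B^GM\subseteq k[G]\otimes M$ compatibly with the identification of $\ind_B^GM$ with $(k[G]\otimes M)^B$; by Lemma \ref{nilradical}, $k[G]\otimes\rad{M}=\rad(k[G]\otimes M)$, so every element of $k[G]\otimes\rad{M}$, in particular every element of $\ind_B^G(\rad{M})$, is nilpotent, i.e. $\ind_B^G(\rad{M})\subseteq\rad(\ind_B^GM)$. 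For the second point, $\ind_B^G(M_{\red})$ is a $G$-subalgebra of $k[G]\otimes M_{\red}$ (again via $(-)^B$), and the latter is reduced by Lemma \ref{nilradical} (using that $k$ is perfect and $M_{\red}$ is reduced), so $\ind_B^G(M_{\red})$ is reduced; hence $\rad(\ind_B^GM)$ maps to $0$ in $\ind_B^G(M_{\red})$, giving $\rad(\ind_B^GM)\subseteq\ind_B^G(\rad{M})$. Combining the two inclusions yields $\rad(\ind_B^GM)=\ind_B^G(\rad{M})$, and then the displayed isomorphism becomes exactly $[\ind_B^GM]_{\red}\cong\ind_B^G(M_{\red})$.

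The main obstacle I anticipate is justifying the right-exactness (equivalently the vanishing of $R^1\ind_B^G(\rad{M})$) carefully: one needs that the dominance hypothesis applies to $\rad{M}$ as a $B$-submodule, and that Kempf vanishing can be applied degreewise/weight-by-weight to the possibly infinite-dimensional graded module $\rad{M}$. This is fine since $\rad{M}$ decomposes as a direct sum of finite-dimensional $B$-modules whose weights all lie in the dominant cone and $\ind_B^G$ commutes with direct sums in this graded setting, but it is the step that deserves explicit comment. The identification $\ind_B^GN\cong(k[G]\otimes N)^B$ used to transport reducedness and nilpotence between the ``$\otimes$'' picture and the ``$\ind$'' picture is routine but should be stated, as should the remark that all the maps involved are maps of algebras, so that module-level isomorphisms upgrade to algebra isomorphisms.
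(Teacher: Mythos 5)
Your argument is correct and is essentially the paper's own proof in a lightly repackaged form: the paper takes $B$-invariants of $k[G]\otimes M$ and reduces the key surjectivity to $\opH^1(B,k[G]\otimes\rad{M})\cong R^1\ind_B^G(\rad{M})=0$ via \cite[Proposition I.4.10]{Jan:2003} and Kempf's vanishing, which is exactly your application of $R^1\ind_B^G(\rad{M})=0$ to the sequence $0\to\rad{M}\to M\to M_{\red}\to 0$. Your identification $\rad(\ind_B^GM)=\ind_B^G(\rad{M})$ via the two inclusions (nilpotence from Lemma \ref{nilradical}, reducedness of $k[G]\otimes M_{\red}$) likewise matches the paper's use of that lemma, so no further changes are needed.
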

\begin{proof}
We first show that $\left[(k[G]\otimes M)^B\right]_{\red}\cong\left[(k[G]\otimes M)_{\red}\right]^B$. Let $A=k[G]\otimes M$. Then we need to prove $(A^B)_{\red}\cong(A_{\red})^B$, i.e., $\frac{A^B}{\rad(A^B)}\cong\left(\frac{A}{\rad(A)}\right)^B$. This is equivalent to showing that the following sequence
\[0\rightarrow\rad(A)^B\rightarrow A^B\rightarrow (A_{\red})^B\]
is right exact; hence equivalent to $\opH^1(B,\rad(A))=0$. Indeed, the preceding lemma shows that
\[ \rad(A)=\rad(k[G]\otimes M)=k[G]\otimes\rad(M). \]
It follows by \cite[Proposition I.4.10]{Jan:2003} and Kempf's vanishing that
\[\opH^1(B,\rad(A))=\opH^1\left(B,k[G]\otimes\rad(M)\right)\cong R^1\ind_B^G\left(\rad(M)\right)=0\]
since all weights of $\rad{M}$ are dominant. Finally, we have
\begin{align*} 
\left(\ind_B^GM\right)_{\red} &\cong\left[(k[G]\otimes M)^B\right]_{\red} \\
&\cong\left[(k[G]\otimes M)_{\red}\right]^B \\
&\cong [k[G]\otimes M_{\red}]^B \\
&=\ind_B^G(M_{\red}).
\end{align*}
\end{proof}
Now we are back to the assumption $G=SL_2$. The following result provides a link between the reduced parts of $B_r$- and of $G_r$-cohomology.

\begin{theorem}\label{G_rcohoreduce}
For each $r\ge 1$, there is a $G$-isomorphism \[ \opH^\bullet(G_r,k)_{\red}^{(-r)}\cong\ind_B^G\opH^\bullet(B_r,k)_{\red}^{(-r)}.\]
\end{theorem}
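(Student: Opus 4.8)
The plan is to deduce this directly from Theorem \ref{G_rwithlambda} (specialized to $\lambda=0$) together with Lemma \ref{lemma of reduceG_r}. Setting $\lambda=0$ in Theorem \ref{G_rwithlambda} gives a graded $G$-algebra isomorphism
\[ \opH^\bullet(G_r,k)^{(-r)}\cong\ind_B^G\left(\opH^\bullet(B_r,k)^{(-r)}\right). \]
I would apply the functor $(-)_{\red}$ to both sides. Frobenius untwisting changes only the $G$-action on a commutative ring and not the ring itself, so it commutes with passing to the nilradical; hence the left-hand side becomes $\opH^\bullet(G_r,k)_{\red}^{(-r)}$, and likewise $\bigl[\opH^\bullet(B_r,k)^{(-r)}\bigr]_{\red}=\opH^\bullet(B_r,k)_{\red}^{(-r)}$. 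The whole statement thus reduces to computing $\bigl[\ind_B^G\bigl(\opH^\bullet(B_r,k)^{(-r)}\bigr)\bigr]_{\red}$, which is exactly what Lemma \ref{lemma of reduceG_r} handles, once its hypothesis is verified.

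The one hypothesis to check is that every weight of $\rad\bigl(\opH^\bullet(B_r,k)^{(-r)}\bigr)$ is dominant. This follows from the explicit description already established: by Theorem \ref{character2} with $m=0$ (equivalently, from the weight analysis in the proof of Theorem \ref{G_rwithlambda}) one has $\opH^\bullet(B_r,k)^{(-r)}\cong\bigoplus_{n\ge 0}(n\omega)^{N_r(p,0,n)}$ as a $B$-module, so all of its weights are dominant; since $\rad M\subseteq M$ is a $T$-submodule, the same holds for $\rad\bigl(\opH^\bullet(B_r,k)^{(-r)}\bigr)$. With this in hand, Lemma \ref{lemma of reduceG_r} (applicable since $k$ is algebraically closed, hence perfect, $G=SL_2$ is split reductive, and $\opH^\bullet(B_r,k)^{(-r)}$ is a graded $B$-algebra via $U$ acting trivially and $B/U\cong T$ acting by the weight-space decomposition recorded after Proposition \ref{B_rwithlambda}) yields
\[ \bigl[\ind_B^G\bigl(\opH^\bullet(B_r,k)^{(-r)}\bigr)\bigr]_{\red}\cong\ind_B^G\Bigl(\bigl[\opH^\bullet(B_r,k)^{(-r)}\bigr]_{\red}\Bigr)=\ind_B^G\bigl(\opH^\bullet(B_r,k)_{\red}^{(-r)}\bigr). \]
Chaining this with the two identifications from the previous paragraph completes the argument; the $p=2$ case is identical, using Proposition \ref{B_rwithlambda:p=2} and Theorem \ref{G_rwithlambda:p=2} in place of their odd-$p$ analogues.

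I do not expect a serious obstacle here: essentially all of the work has been done in Theorem \ref{G_rwithlambda} and Lemma \ref{lemma of reduceG_r}, and the present statement is the combination of the two. The only point requiring a moment's care is the dominance of the weights of the nilradical of $\opH^\bullet(B_r,k)^{(-r)}$, which is immediate from the monomial/weight description above; everything else is formal bookkeeping with Frobenius twists and the reduction functor.
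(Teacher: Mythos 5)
Your proposal is correct and follows essentially the same route as the paper, which deduces the theorem immediately from Theorem \ref{G_rwithlambda} (with $\lambda=0$) together with Lemma \ref{lemma of reduceG_r}; you have simply spelled out the details the paper leaves implicit, namely that reduction commutes with the Frobenius untwist and that all weights of $\rad\bigl(\opH^\bullet(B_r,k)^{(-r)}\bigr)$ are dominant because every weight of $\opH^\bullet(B_r,k)^{(-r)}$ is of the form $n\omega$.
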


\begin{proof}
It immediately follows from Theorem \ref{G_rwithlambda} and the lemma above.
\end{proof}

\begin{proposition}\label{G_rscheme}
For each $r\ge 1$, there is a homeomorphism from $\spec k[G\times^B\fraku^r]$ onto $\spec\opH^\bullet(G_r,k)_{\red}$.
\end{proposition}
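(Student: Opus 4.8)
The plan is to combine Theorem \ref{G_rcohoreduce}, which identifies $\opH^\bullet(G_r,k)_{\red}^{(-r)}$ with $\ind_B^G \opH^\bullet(B_r,k)_{\red}^{(-r)}$, with the description of $\opH^\bullet(B_r,k)_{\red}$ from Corollary \ref{B_rscheme} and the identity $k[G\times^B\fraku^r]\cong (k[G]\otimes k[\fraku^r])^B = \ind_B^G k[\fraku^r]$ recorded in Section \ref{notation}. Since $\fraku$ is one-dimensional with $T$ acting by the character $-\alpha$ (so $\fraku^*$ has weight $\alpha$), the coordinate ring $k[\fraku^r] = S^\bullet((\fraku^*)^{\oplus r})$ is a polynomial ring $k[t_1,\ldots,t_r]$ in $r$ variables each of weight $\alpha$, carrying the diagonal $B$-action through $T$. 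The first step is therefore to exhibit, for the reduced $B_r$-cohomology ring, a $B$-equivariant $F$-isomorphism (in the sense of Section \ref{notation}) with $k[t_1,\ldots,t_r]$; this is essentially Corollary \ref{B_rscheme}, but I need to track the $B$-module (equivalently, $T$-weight) structure, since $x_i$ has weight $p^i\alpha$ (resp.\ $p^{i-1}\alpha$ when $p=2$) while $t_i$ has weight $\alpha$.

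Next I would apply $\ind_B^G$ to this chain of inclusions. The key point is that the functor $\spec$ turns the $F$-isomorphism of $B$-algebras into a $B$-equivariant homeomorphism $\spec k[t_1,\ldots,t_r] = \fraku^r \cong \spec \opH^\bullet(B_r,k)_{\red}^{(-r)}$, and that induction is compatible with this: concretely, from $F_s(S)\subseteq R\subseteq S$ as $B$-algebras one gets $(k[G]\otimes F_s(S))^B \subseteq (k[G]\otimes R)^B \subseteq (k[G]\otimes S)^B$, and because Frobenius on $k[G]$ together with Frobenius on $S$ witnesses $F_s((k[G]\otimes S)^B)\subseteq (k[G]\otimes F_s(S))^B$ (using that $k[G]$ is reduced and that taking $p^s$-th powers commutes with the $B$-action), the induced inclusion $\ind_B^G R \hookrightarrow \ind_B^G S$ is again an $F$-isomorphism, hence a homeomorphism on spectra. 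Applying this with $R=\opH^\bullet(B_r,k)_{\red}^{(-r)}$ (sandwiched between $\im\calF$ and $k[x_1,\ldots,x_r]^{(r)}$ as in the proof of Corollary \ref{B_rscheme}, but now I want $k[t_1,\ldots,t_r]$ in the picture; one checks $\opH^\bullet(B_r,k)_{\red}^{(-r)}$ and $k[t_1,\ldots,t_r]$ each contain a common Frobenius power of the other as $B$-subalgebras) yields
\[
\spec k[G\times^B\fraku^r] = \spec \ind_B^G k[t_1,\ldots,t_r] \;\cong\; \spec\ind_B^G\opH^\bullet(B_r,k)_{\red}^{(-r)} \;\cong\; \spec \opH^\bullet(G_r,k)_{\red},
\]
where the last step is Theorem \ref{G_rcohoreduce} together with the fact that the Frobenius twist $(-)^{(r)}$ induces a homeomorphism (indeed isomorphism of underlying topological spaces) on $\spec$.

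To make the $F$-isomorphism between $\opH^\bullet(B_r,k)_{\red}^{(-r)}$ and $k[t_1,\ldots,t_r]$ precise, I would use Theorem \ref{B_rcohoreduce}: writing $A_r := \opH^\bullet(B_r,k)_{\red}$, it contains $R=k[x_1^{p^{r-1}},\ldots,x_r]$ over which it is module-finite, and $R$ itself, after the twist and after renaming $x_i^{p^{r-i}} \mapsto$ a weight-$p^{r-1}\alpha$ (resp.\ weight-$\alpha$ in the $p=2$ case, up to a uniform twist) generator, is visibly a polynomial ring whose spectrum is $\fraku^r$ up to a Frobenius; since $A_r$ is sandwiched between two Frobenius powers of this polynomial ring and is integral over it, the inclusion is an $F$-isomorphism of graded $B$-algebras. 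The main obstacle, and the place where care is genuinely required, is bookkeeping the $B$-module (i.e.\ $T$-weight) structure through the twists $(-)^{(-r)}$ and the renaming of generators, and verifying that the Frobenius maps on $k[G]$ and on the fiber ring are simultaneously $B$-equivariant so that they descend to $\ind_B^G$ — in other words, upgrading the ring-theoretic $F$-isomorphism of Corollary \ref{B_rscheme} to a $B$-equivariant one and then checking that $\ind_B^G$ preserves $F$-isomorphisms. Once that compatibility is in hand, everything else is a formal concatenation of homeomorphisms. Finally, I should remark that the two cases $p\ne 2$ and $p=2$ run in parallel, using Proposition \ref{B_rwithlambda:p=2} and Theorem \ref{G_rwithlambda:p=2} in place of their $p\ne 2$ analogues, and produce the same target $G\times^B\fraku^r$ since in both cases the reduced fiber is a polynomial ring in $r$ variables.
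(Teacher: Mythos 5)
Your argument is correct and passes through the same two pillars as the paper (Corollary \ref{B_rscheme} at the $B_r$-level and Theorem \ref{G_rcohoreduce} to pass to $G_r$), but you implement the crucial transport step differently. The paper works with spaces: it untwists the $F$-isomorphism of Corollary \ref{B_rscheme} to a $B$-equivariant homeomorphism $\fraku^{(1)}\times\cdots\times\fraku^{(r)}\to\spec\opH^\bullet(B_r,k)_{\red}^{(-r)}$, applies the fibered product $G\times_B(-)$ to it (a step asserted without further justification), and then trades $\fraku^{(1)}\times\cdots\times\fraku^{(r)}$ for $\fraku^r$ via the auxiliary $B$-equivariant homeomorphism $(y_1,\ldots,y_r)\mapsto(y_1^p,\ldots,y_r^{p^r})$. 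You stay with rings throughout: the subring $R$ of Theorem \ref{B_rcohoreduce}, whose untwisted generators all have weight $\alpha$ when $p\ne 2$, plays the role of $k[\fraku^r]$ and absorbs the paper's auxiliary map through the sandwich $F_r\left(\opH^\bullet(B_r,k)_{\red}^{(-r)}\right)\subseteq R^{(-r)}\subseteq\opH^\bullet(B_r,k)_{\red}^{(-r)}$, and your lemma that $\ind_B^G=(k[G]\otimes -)^B$ carries $F$-isomorphisms to $F$-isomorphisms (because $\left(\sum a_i\otimes s_i\right)^{p^s}=\sum a_i^{p^s}\otimes s_i^{p^s}$ lies in $(k[G]\otimes F_s(S))^B$) supplies exactly the justification that the paper's ``apply $G\times_B(-)$'' step leaves implicit. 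So your route is more self-contained on the descent step, while the paper's is more geometric, exhibiting the intermediate bundle $G\times_B\left(\fraku^{(1)}\times\cdots\times\fraku^{(r)}\right)$. Two small points to nail down in a final write-up: for $p=2$ the untwisted generators of $R$ have weight $\omega$ rather than $\alpha$, so to land literally in $k[\fraku^r]$ you must pass to one further Frobenius power (for instance the subring generated by the $x_i^{2^{r+1-i}}$), which your phrase ``a common Frobenius power of the other'' accommodates but should be made explicit; and since the paper's notion of $F$-isomorphism is stated for finitely generated algebras, either observe that both induced algebras in your chain are finitely generated, or verify directly that $F_s(S)\subseteq R\subseteq S$ forces $\spec S\to\spec R$ to be a homeomorphism (integrality gives surjectivity and closedness, and $x\in P\Leftrightarrow x^{p^s}\in P\cap R$ gives injectivity), so that finite generation is not actually needed for the topological conclusion.
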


\begin{proof}
Let $\fraku^{(1)}\times\cdots\times\fraku^{(r)}=\spec k[x_1,\ldots, x_r]$ where each $\fraku^{(i)}$ is identified with the weight space $k_{p^i\alpha}$; hence we consider it as an affine space equipped with the $B$-action. In Corollary \ref{B_rscheme}, we can see that the $F$-isomorphism $\mathfrak{i}$ arises from the inclusion $\Hbul(B_r,k)_{\red}\subseteq k[x_1, \ldots, x_r]^{(r)}$, so it is compatible with the $B$-action. It follows the $B$-equivariant homeomorphism
\[ \mathfrak{i}^{(-r)}: \fraku^{(1)}\times\cdots\times\fraku^{(r)}\to \spec\Hbul(B_r,k)_{\red}^{(-r)}. \]

Apply the fibered product with $G$ over $B$ on both sides, we have a homeomorphism
\[ id_G\times_B \mathfrak{i}^{(-r)}: G\times_{B}\left(\fraku^{(1)}\times\cdots\times\fraku^{(r)}\right) \to G\times_{B}\spec\opH^\bullet(B_r,k)_{\red}^{(-r)}. \]
On the other hand, define a map:
\begin{align*}
\Phi:\fraku^r &\to \fraku^{(1)}\times\cdots\times\fraku^{(r)}\\
(y_1, y_2, \ldots , y_r) &\mapsto (y_1^{p}, y_2^{p^{2}},\ldots, y_r^{p^r}) 
\end{align*}
for all $y_i\in\fraku$. It is easy to see that $\Phi$ is a $B$-equivariant continuous map which is also a homeomorphism. It follows that the fibered products $G\times_{B}\left(\fraku^{(1)}\times\cdots\times\fraku^{(r)}\right)$ and $G\times_B\fraku^r$ are homeomorphic as a topological space. Now combine two above homeomorphisms and apply Theorem \ref{G_rcohoreduce}, we establish a homeomorphism from $\spec k[G\times^B\fraku^r]$ onto $\spec\opH^\bullet(G_r,k)_{\red}$; hence completes our proof.
\end{proof}

\section{Cohen-Macaulay Cohomology ring}\label{Cohen-Macaulayrings}
It is known that there are many classes of groups for which the cohomology is Cohen-Macaulay \cite{Ben:2004}. Our calculations in this section provide an evidence for the conjecture that simple algebraic groups also have Cohen-Macaulay cohomology. In particular, we show that the reduced rings $\opH^{\bullet}(B_r,k)_{\red}$ and $\opH^{\bullet}(G_r,k)_{\red}$ are Cohen-Macaulay. These results are true for any characteristic $p>0$, but we only present the proofs in the case $p\ne 2$. As the strategy is the same for the other case, we leave technical detail for readers.

\subsection{} First we restate some facts in \cite{Jan:2004}.
\begin{lemma}
Suppose $\lambda\in\Z\Phi$. Then for each $n\in\Z$ there is an $G$-isomorphism 
\[ \ind_B^G(S^n(\fraku^*)\otimes\lambda)\cong k[G\times^B\fraku]_{(2n+2ht(\lambda))}.\]
Moreover, the algebra $\ind_B^G(S^n(\fraku^*)\otimes\lambda)$ is $k[G\times^B\fraku]$ shifted by the degree $2\rm{ht}(\lambda)$. 
\end{lemma}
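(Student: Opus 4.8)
The plan is to unwind both sides of the claimed isomorphism into a concrete graded-ring statement and then match graded pieces. First I would recall that $G = SL_2$, so $\fraku$ is one-dimensional, $\fraku^* \cong k_\alpha$ as a $T$-module, and $S^n(\fraku^*) \cong k_{n\alpha}$ in a single cohomological bidegree; in particular, as a $B$-module $S^n(\fraku^*)\otimes\lambda \cong k_{n\alpha + \lambda}$. Since $\lambda \in \Z\Phi = \Z\alpha$, writing $\lambda = \mathrm{ht}(\lambda)\alpha$, the weight is $(n + \mathrm{ht}(\lambda))\alpha = 2(n+\mathrm{ht}(\lambda))\omega$, so $\ind_B^G(S^n(\fraku^*)\otimes\lambda) \cong \ind_B^G\big((n+\mathrm{ht}(\lambda))\alpha\big) = H^0\big(2(n+\mathrm{ht}(\lambda))\omega\big)$, which is nonzero exactly when $n + \mathrm{ht}(\lambda) \geq 0$.

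Next I would identify the graded coordinate ring $k[G\times^B\fraku]$. Using the identification $k[G\times^B\fraku] \cong (k[G]\otimes S^\bullet(\fraku^*))^B = \ind_B^G S^\bullet(\fraku^*)$, and grading by the symmetric-algebra degree (with a factor of $2$ so that $S^n$ sits in degree $2n$, matching the cohomological convention fixed earlier where $x_i$ has degree $2$), we get $k[G\times^B\fraku]_{(2n)} \cong \ind_B^G(S^n(\fraku^*)) \cong H^0(2n\omega)$ for $n\geq 0$ and $0$ otherwise. Comparing with the previous paragraph, $\ind_B^G(S^n(\fraku^*)\otimes\lambda) \cong H^0(2(n+\mathrm{ht}(\lambda))\omega) \cong k[G\times^B\fraku]_{(2n + 2\mathrm{ht}(\lambda))}$, which is the first assertion. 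The $G$-equivariance of all maps involved is automatic since $\ind_B^G$ is a functor of $B$-modules and the identifications are the standard ones.

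For the "moreover" clause, the point is that the graded $k[G\times^B\fraku]$-module structure on $\bigoplus_n \ind_B^G(S^n(\fraku^*)\otimes\lambda)$ — coming from multiplication $S^m(\fraku^*)\otimes(S^n(\fraku^*)\otimes\lambda) \to S^{m+n}(\fraku^*)\otimes\lambda$ inside $\ind_B^G(S^\bullet(\fraku^*)\otimes\lambda)$ — agrees under the degree-matching above with the free rank-one $k[G\times^B\fraku]$-module generated in degree $2\mathrm{ht}(\lambda)$ (or, when $\mathrm{ht}(\lambda) < 0$, a shift that truncates the negative-degree part). Concretely, $\ind_B^G(S^\bullet(\fraku^*)\otimes\lambda) \cong k[G\times^B\fraku]\cdot e_\lambda$ with $\deg e_\lambda = 2\mathrm{ht}(\lambda)$, realized by the isomorphism of $S^\bullet(\fraku^*)$-modules $S^\bullet(\fraku^*)\otimes\lambda \cong S^\bullet(\fraku^*)$ (shifting degree, valid since $\fraku^*$ is a line) followed by $\ind_B^G$.

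I expect the main obstacle to be purely bookkeeping: keeping the cohomological grading (degree $2$ per symmetric power) consistent with the algebra grading on $k[G\times^B\fraku]$ and tracking the height shift with the correct sign, especially when $\mathrm{ht}(\lambda)$ is negative so that some low-degree graded pieces vanish and "shift by $2\mathrm{ht}(\lambda)$" must be read as a shift of a graded module rather than a literal subspace inclusion. No deeper input is needed beyond $SL_2$-specific one-dimensionality of $\fraku$, the identification $k[G\times^B V] \cong \ind_B^G k[V]$, and Kempf vanishing (already invoked earlier) to see that the $\ind_B^G$ in each degree is concentrated in degree zero.
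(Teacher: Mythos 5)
Your first two paragraphs establish the displayed isomorphism by a route genuinely different from the paper's: the paper simply invokes Proposition 8.22 of Jantzen's notes \cite{Jan:2004} (with $\bar{\lambda}=0$ because $\lambda\in\Z\Phi$), whereas you exploit that for $SL_2$ the space $\fraku$ is a line, so $S^n(\fraku^*)\otimes\lambda$ is the single weight $(n+\mathrm{ht}(\lambda))\alpha$, and then match $\ind_B^G\bigl((n+\mathrm{ht}(\lambda))\alpha\bigr)$ against $k[G\times^B\fraku]_{(2n+2\mathrm{ht}(\lambda))}\cong\ind_B^G\bigl(S^{\,n+\mathrm{ht}(\lambda)}(\fraku^*)\bigr)$, using $k[G\times^B\fraku]\cong\ind_B^G S^\bullet(\fraku^*)$ and the degree-$2$ convention. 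This is correct (granting the paper's own weight convention ``$S^m(\fraku^*)=m\alpha$'' and its use of $\ind_B^G$ on dominant weights), and it has the merit of being elementary and self-contained; the citation the paper uses buys generality (arbitrary $G$, parabolic versions) that is not needed here.

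The ``moreover'' paragraph, however, contains a genuine error. The claim that $\ind_B^G(S^\bullet(\fraku^*)\otimes\lambda)$ is a \emph{free rank-one} $k[G\times^B\fraku]$-module $k[G\times^B\fraku]\cdot e_\lambda$ with $\deg e_\lambda=2\mathrm{ht}(\lambda)$ is false for $\mathrm{ht}(\lambda)\neq 0$: by your own first computation the degree-$0$ component is $\ind_B^G(\mathrm{ht}(\lambda)\alpha)$, which is $(\mathrm{ht}(\lambda)+1)$-dimensional when $\mathrm{ht}(\lambda)>0$, so the module is neither generated by one element nor concentrated in degrees $\ge 2\mathrm{ht}(\lambda)$; note also that your proposed generator places the shift in the wrong direction (a free module generated in degree $2\mathrm{ht}(\lambda)$ has degree-$2n$ piece $k[G\times^B\fraku]_{2n-2\mathrm{ht}(\lambda)}$, not $k[G\times^B\fraku]_{2n+2\mathrm{ht}(\lambda)}$), and the truncation phenomenon occurs when $\mathrm{ht}(\lambda)>0$ (the low-degree pieces of $k[G\times^B\fraku]$ are missed), not when $\mathrm{ht}(\lambda)<0$. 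The justification offered is also invalid: $S^\bullet(\fraku^*)\otimes\lambda\cong S^\bullet(\fraku^*)$ is an isomorphism of $S^\bullet(\fraku^*)$-modules but not of $B$-modules, so it cannot be ``followed by $\ind_B^G$.'' Fortunately none of this is required: the lemma's second sentence asserts only the graded identification, i.e.\ the first display taken for all $n$ at once, so you should simply delete the freeness claim and state the ``moreover'' as the summed form of what you already proved.
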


\begin{proof}
This is a consequence of Proposition 8.22 in \cite{Jan:2004}. Note that since $\lambda\in\Z\Phi$, we have $\bar{\lambda}=0$. Then replace $k[\hat{\calO},0]=k[\calO]$ by $k[G\times^B\fraku]$ in the formula in page 111.
\end{proof}

\begin{lemma}
For each $\lambda\in\Z\Phi$, the algebra $\ind_B^G\left(S^\bullet(\fraku^{*(1)}\times\cdots\times\fraku^{*(r)})\otimes\lambda\right)$ is isomorphic to $\ind_B^GS^\bullet(\fraku^{*(1)}\times\cdots\times\fraku^{*(r)})$ with the degree shifted by $2\rm{ht}(\lambda)$.
\end{lemma}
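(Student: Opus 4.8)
The plan is to deduce this from the previous lemma by decomposing $S^\bullet(\fraku^{*(1)}\times\cdots\times\fraku^{*(r)})$ into its multigraded pieces. Since $\fraku$ is one-dimensional, $\fraku^{*(i)}$ is the one-dimensional weight space $k_{-p^i\alpha}$ (or the appropriate twist thereof), and hence
\[
S^\bullet(\fraku^{*(1)}\times\cdots\times\fraku^{*(r)})=\bigoplus_{(n_1,\ldots,n_r)\in\mathbb{N}^r} S^{n_1}(\fraku^{*(1)})\otimes\cdots\otimes S^{n_r}(\fraku^{*(r)}),
\]
where each summand is a one-dimensional $B$-module. Tensoring with the fixed weight $\lambda\in\Z\Phi$ and applying the exact functor $\ind_B^G$ (using Kempf vanishing, so there is no higher cohomology to worry about and induction commutes with the direct sum), I would reduce to comparing, summand by summand, $\ind_B^G\bigl(S^{n_1}(\fraku^{*(1)})\otimes\cdots\otimes S^{n_r}(\fraku^{*(r)})\otimes\lambda\bigr)$ with $\ind_B^G\bigl(S^{n_1}(\fraku^{*(1)})\otimes\cdots\otimes S^{n_r}(\fraku^{*(r)})\bigr)$.

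Next I would note that each twisted symmetric power $S^{n_i}(\fraku^{*(i)})$ is, as a $B$-module, just the weight $-n_ip^i\alpha$ sitting in a single degree, so that the whole tensor product $S^{n_1}(\fraku^{*(1)})\otimes\cdots\otimes S^{n_r}(\fraku^{*(r)})$ equals, as a $B$-module, a single weight of the form $S^N(\fraku^*)$ for an appropriate $N$ (namely $2N = 2(n_1p + n_2p^2+\cdots)$ after matching degrees, but the precise bookkeeping is routine). Then tensoring with $\lambda$ and invoking the previous lemma identifies $\ind_B^G\bigl(S^N(\fraku^*)\otimes\lambda\bigr)$ with $k[G\times^B\fraku]_{(2N+2\mathrm{ht}(\lambda))}$, i.e.\ the same graded piece as $\ind_B^G S^N(\fraku^*)$ but shifted in degree by $2\mathrm{ht}(\lambda)$, and this shift is independent of $N$ (equivalently, of the tuple $(n_1,\ldots,n_r)$). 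Reassembling over all tuples, the uniform degree shift $2\mathrm{ht}(\lambda)$ passes to the direct sum, giving the claimed isomorphism of graded algebras.

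The one point that needs care — and I expect it to be the main (though modest) obstacle — is verifying that the degree shift is genuinely \emph{uniform} across all the multigraded summands, so that the isomorphism is one of graded algebras and not merely a degree-by-degree vector space identification. This amounts to checking that the shift predicted by the previous lemma depends only on $\mathrm{ht}(\lambda)$ and not on the internal grading data, and that the algebra structure (the multiplication coming from concatenation of symmetric powers on the source, and from $k[G\times^B\fraku]$ on the target) is respected by the identifications; this is where one must be slightly attentive to the conventions in Proposition 8.22 of \cite{Jan:2004} regarding how the shift interacts with products. Everything else is a bookkeeping exercise with the one-dimensionality of $\fraku$ and the exactness of $\ind_B^G$ on dominant weights.
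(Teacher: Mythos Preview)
Your proposal is correct and follows essentially the same route as the paper: decompose $S^\bullet(\fraku^{*(1)}\times\cdots\times\fraku^{*(r)})$ into its multigraded one-dimensional pieces, identify each with $S^N(\fraku^*)$ for $N=n_1p+\cdots+n_rp^r$, apply the previous lemma, and observe that the resulting shift $2\,\mathrm{ht}(\lambda)$ is independent of the tuple. (One small sign slip: in the paper's conventions $\fraku^*$ has weight $\alpha$, not $-\alpha$, so $S^{n_i}(\fraku^{*(i)})$ has weight $n_ip^i\alpha$; your parenthetical hedge already covers this.) Your caution about the uniformity of the shift and the compatibility with the algebra structure is well placed and is exactly the content the paper leaves implicit.
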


\begin{proof}
We employ the algebra map in the previous lemma. In particular, for each degree $n$, we first consider
\begin{align*}
\ind_B^G\left(S^n(\fraku^{*(1)}\times\cdots\times\fraku^{*(r)})\otimes\lambda\right) &= \bigoplus_{a_1+\cdots +a_r=n}\ind_B^G\left(S^{a_1}(\fraku^{*(1)})\otimes\cdots\otimes S^{a_r}(\fraku^{*(r)})\otimes\lambda\right) \\
&\cong \bigoplus_{a_1+\cdots +a_r=n}\ind_B^G\left((a_1p+\cdots +a_rp^r)\alpha+\lambda\right) \\
&=\bigoplus_{a_1+\cdots +a_r=n}\ind_B^G\left(S^{(a_1p+\cdots +a_rp^r)}(\fraku^*)\otimes\lambda\right) \\
&\cong\bigoplus_{a_1+\cdots +a_r=n}k[G\times^B\fraku]_{2(a_1p+\cdots +a_rp^r)+2ht(\lambda)}.
\end{align*}
Now if $\lambda=0$ in this observation, we obtain for each $n$ that
\[ \ind_B^G\left(S^n(\fraku^{*(1)}\times\cdots\times\fraku^{*(r)})\right)\cong\bigoplus_{a_1+\cdots +a_r=n}k[G\times^B\fraku]_{2(a_1p+\cdots +a_rp^r)}. \] 
This implies our proof. Consequently, the two algebras in the lemma are isomorphic (without the grading). 
\end{proof}

\begin{lemma}
The ring $R=\ind_B^G\left(S^\bullet(\fraku^{*(1)}\times\cdots\times\fraku^{*(r)})\right)$ is Cohen-Macaulay.
\end{lemma}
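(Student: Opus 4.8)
The plan is to deduce the Cohen-Macaulayness of $R=\ind_B^G\left(S^\bullet(\fraku^{*(1)}\times\cdots\times\fraku^{*(r)})\right)$ from the previous two lemmas, which together show that $R$ is (non-canonically) isomorphic, as a graded ring up to a degree rescaling, to a ring built out of the single object $k[G\times^B\fraku]$. Concretely, $k[G\times^B\fraku]$ is the coordinate ring of the cotangent bundle $T^*(G/B)$, equivalently $k[G\times^B\fraku] = \ind_B^G S^\bullet(\fraku^*)$, and this is a well-studied object: by the work of Broer (and the discussion in \cite{Jan:2004}), $k[G\times^B\fraku]$ is a normal Cohen-Macaulay domain, in fact it has rational singularities, because the moment morphism $G\times^B\fraku \to \N$ is a resolution of singularities of the nilpotent cone (the Springer resolution in type $A$). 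For $G=SL_2$ this is completely explicit: $\N$ is the quadric cone in $\A^3$, $k[\N]$ is a hypersurface, hence Cohen-Macaulay, and $G\times^B\fraku \to \N$ is an isomorphism over the regular locus; alternatively one just writes $k[SL_2\times^B\fraku]$ down directly as a subring of a polynomial ring and checks it is Cohen-Macaulay by a short computation.

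First I would record that $R$ decomposes as a graded module over a polynomial subring. From the computation in the previous lemma, in each degree $n$ we have $R_n \cong \bigoplus_{a_1+\cdots+a_r=n} k[G\times^B\fraku]_{2(a_1 p + \cdots + a_r p^r)}$, so as a graded vector space $R$ is a direct sum of shifted copies of $k[G\times^B\fraku]$. The cleaner way to package this: the $r$ ``lowest'' symmetric generators of $S^\bullet(\fraku^{*(1)}\times\cdots\times\fraku^{*(r)})$ together with the structure of $G/B$ exhibit $R$ as a free module of finite rank over a polynomial extension of $k[G\times^B\fraku]$, or conversely $k[G\times^B\fraku][z_1,\ldots,z_{r-1}] \subseteq R$ is a finite extension. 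Since Cohen-Macaulayness is preserved under polynomial extension ($A$ CM $\Rightarrow A[z]$ CM) and ascends along finite free extensions, and since a finitely generated graded module over a CM ring that is free is itself CM when the base is CM of the same dimension, the CM-ness of $k[G\times^B\fraku]$ propagates to $R$.

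The key steps, in order, would be: (1) cite or reprove that $k[G\times^B\fraku] = \ind_B^G S^\bullet(\fraku^*)$ is Cohen-Macaulay --- for $SL_2$ this follows since it is (the coordinate ring of) a resolution of the quadric cone, or by direct inspection; (2) use the displayed degree-by-degree isomorphism to realize $R$ as built from $k[G\times^B\fraku]$ by finitely many polynomial-variable adjunctions and a finite free extension, equivalently to identify $\Spec R$ with $G\times^B(\fraku^{(1)}\times\cdots\times\fraku^{(r)})$, an affine bundle-type object over $G/B$ with fiber an affine variety which is a product/iterated cone and whose coordinate ring is CM; (3) invoke the standard facts that $\ind_B^G$ of a ``CM enough'' $B$-algebra with dominant weights is CM, citing \cite[Proposition 8.22]{Jan:2004} or the surrounding results in \cite{Jan:2004}, to conclude. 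The main obstacle I expect is step (2)/(3): making precise the passage from the graded vector space decomposition to an actual statement about the ring structure and depth --- i.e. verifying that the ``extra'' twisted summands assemble into a finite free module over a CM polynomial extension rather than merely matching Hilbert series, and checking the module has the right dimension so that freeness forces maximal depth. This is essentially the content already extracted in the two preceding lemmas, so the proof should be short: assemble those lemmas, note $k[G\times^B\fraku]$ is CM, and conclude that $R$ is CM because it is obtained from a CM ring by operations preserving the CM property.
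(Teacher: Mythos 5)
Your overall strategy is not the paper's, and its central step has a genuine gap. You propose to build $R$ ``up'' from $k[G\times^B\fraku]$, claiming that $R$ is a finite free module over a polynomial extension $k[G\times^B\fraku][z_1,\ldots,z_{r-1}]$, and then to propagate Cohen--Macaulayness through polynomial and finite free extensions. But the preceding lemma only gives a degree-by-degree identification of graded \emph{vector spaces} (indeed $G$-modules), $R_n\cong\bigoplus_{a_1+\cdots+a_r=n}k[G\times^B\fraku]_{2(a_1p+\cdots+a_rp^r)}$, and this does not produce any ring map from $k[G\times^B\fraku]$ to $R$, nor an $R$-compatible module structure. Note that the graded pieces of $k[G\times^B\fraku]$ that occur all sit in degrees divisible by $2p$; the only subalgebras of $R$ you can see this way are Veronese-type subrings such as $\ind_B^G S^\bullet(\fraku^{*(i)})\cong\bigoplus_n k[G\times^B\fraku]_{2np^i}$, never a copy of $k[G\times^B\fraku]$ itself (for $SL_2$ and $i\ge 1$ these are coordinate rings of $A_{2p^i-1}$-type singularities, not of the quadric cone). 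Moreover $\spec R=G\times_B\bigl(\fraku^{(1)}\times\cdots\times\fraku^{(r)}\bigr)$ is a twisted bundle over $G/B$, not a product of $G\times_B\fraku$ with an affine space, so the ``adjoin variables, then finite free extension'' mechanism has no starting point. You flag this yourself as the expected obstacle; it is exactly the step that fails, and matching Hilbert series cannot replace it.

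The paper argues in the opposite direction, by flat descent from the \emph{untwisted} $r$-fold bundle. Concretely: $k[\fraku^r]=k[y_1,\ldots,y_r]$ is free over the subring $S^\bullet(\fraku^{*(1)}\times\cdots\times\fraku^{*(r)})=k[y_1^{p},\ldots,y_r^{p^r}]$; since $\ind_B^G$ preserves direct sums, $k[G\times^B\fraku^r]$ is a free, hence faithfully flat, $R$-module via the inclusion $R\hookrightarrow k[G\times^B\fraku^r]$. The Cohen--Macaulayness of $k[G\times^B\fraku^r]$ is then imported from \cite{Ngo:2012} (this is the essential nontrivial input — a statement about an $(r+1)$-dimensional variety, not about the cotangent bundle $G\times^B\fraku$, whose CM-ness for $SL_2$ is easy but insufficient here), and Cohen--Macaulayness descends to $R$ along the faithfully flat map by \cite[Proposition 2.6(d)]{I:2008}. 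If you want to salvage your write-up, replace steps (2)--(3) by this freeness-plus-descent argument; as written, the proposal does not prove the lemma.
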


\begin{proof}
It is not hard to see that $k[\fraku^r]$ is a free $k[\fraku^{*(1)}\times\cdots\times\fraku^{*(r)}]$-module. As the induction functor preserves direct sums, we have $k[G\times^B\fraku^r]$ is a free $R$-module. Hence there is a flat homomorphism of $R$-modules
\[ R\hookrightarrow k[G\times^B\fraku^r]\]
By \cite[Theorem 5.2.7 - 5.4.1]{Ngo:2012}, we obtain that the ring $k[G\times^B\fraku^r]$ is Cohen-Macaulay. As the flatness is locally preserved, Proposition 2.6(d) in \cite{I:2008} implies that the Cohen-Macaulayness of $R$ follows from that of $k[G\times^B\fraku^r]$. 
\end{proof}
We can now establish the goal of this section.
\begin{theorem}\label{Cohen-Macaulay for B_r and G_r}
Both rings $\opH^{\bullet}(B_r,k)_{\red}$ and $\opH^{\bullet}(G_r,k)_{\red}$ are Cohen-Macaulay.
\end{theorem}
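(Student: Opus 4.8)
The plan is to deduce the Cohen--Macaulay property for $\opH^\bullet(B_r,k)_{\red}$ and $\opH^\bullet(G_r,k)_{\red}$ from the preceding lemmas, which already establish that $R=\ind_B^G\left(S^\bullet(\fraku^{*(1)}\times\cdots\times\fraku^{*(r)})\right)$ is Cohen--Macaulay, together with the ring-theoretic descriptions obtained in Section~\ref{reduced cohomology}. First I would handle $\opH^\bullet(B_r,k)_{\red}$. By Theorem~\ref{B_rcohoreduce} we have $\opH^\bullet(B_r,k)_{\red}\cong\left(k[x_1,\ldots,x_r]\right)^{T_r}$, and, more usefully, the theorem exhibits it as a \emph{finitely generated free} module over the polynomial subring $R_0=k[x_1^{p^{r-1}},x_2^{p^{r-2}},\ldots,x_r]$, with explicit finite basis $\mathfrak{B}_r$. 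Since $R_0$ is a polynomial ring in $r$ variables it is Cohen--Macaulay, and a ring that is a finite free module over a Cohen--Macaulay subring (of the same Krull dimension) is itself Cohen--Macaulay; this is the standard fact that a finite flat extension of a Cohen--Macaulay ring is Cohen--Macaulay (cf. the reference to \cite{I:2008} used in the previous lemma). Thus $\opH^\bullet(B_r,k)_{\red}$ is Cohen--Macaulay.

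Next I would treat $\opH^\bullet(G_r,k)_{\red}$. By Theorem~\ref{G_rcohoreduce}, $\opH^\bullet(G_r,k)_{\red}^{(-r)}\cong\ind_B^G\opH^\bullet(B_r,k)_{\red}^{(-r)}$, and by Proposition~\ref{G_rscheme} (and the $B$-equivariance of the $F$-isomorphism $\mathfrak{i}$) the ring $\opH^\bullet(B_r,k)_{\red}^{(-r)}$ sits, as a $B$-algebra, between $\im\calF$ and $k[\fraku^{(1)}\times\cdots\times\fraku^{(r)}]=k[x_1,\ldots,x_r]$. The cleanest route is to induce the free $R_0$-module structure from Theorem~\ref{B_rcohoreduce} up to $G$: applying $\ind_B^G$ to the decomposition $\opH^\bullet(B_r,k)_{\red}^{(-r)}\cong\bigoplus_{w\in\mathfrak{B}_r}R_0\cdot w$ and using that $\ind_B^G$ is additive and commutes with the $R_0$-action (each basis element $w$ having dominant weight, so that higher derived inductions vanish by Kempf vanishing exactly as in Lemma~\ref{lemma of reduceG_r}), we get that $\opH^\bullet(G_r,k)_{\red}^{(-r)}$ is a finite free module over $\ind_B^G R_0$. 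But $R_0\cong S^\bullet(\fraku^{*(1)}\times\cdots\times\fraku^{*(r)})$ after an evident reindexing (it is the polynomial ring on the appropriate weight spaces), so $\ind_B^G R_0$ is precisely the ring $R$ shown to be Cohen--Macaulay in the last lemma of Section~\ref{Cohen-Macaulayrings}. Again invoking that a finite free extension of a Cohen--Macaulay ring is Cohen--Macaulay gives the result.

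The main obstacle I anticipate is the bookkeeping needed to justify the two reductions precisely: namely (i) checking that $\opH^\bullet(B_r,k)_{\red}$ and its polynomial subring $R_0$ have the same Krull dimension $r$, so that finite freeness genuinely transfers Cohen--Macaulayness (this is clear from $\im\calF\subseteq R_0\subseteq\opH^\bullet(B_r,k)_{\red}\subseteq k[x_1,\ldots,x_r]$ forcing all three outer inclusions to be integral and hence dimension-preserving), and (ii) verifying that $\ind_B^G$ applied to a finite direct sum of one-dimensional $B$-modules of dominant weight yields a finite free module over $\ind_B^G R_0$ with no correction terms, which follows from Kempf vanishing and the projection formula $\opH^\bullet(B,k[G]\otimes-)\cong R^\bullet\ind_B^G(-)$ exactly as exploited in Lemma~\ref{lemma of reduceG_r}. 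Once these points are in place, the two Cohen--Macaulay conclusions are immediate, and the proof is complete. For the case $p=2$ one argues identically, replacing Theorem~\ref{B_rcohoreduce} and the ring $R$ by their $p=2$ analogues from Proposition~\ref{B_rwithlambda:p=2} and the corresponding lemma, which is why we omit the details there.
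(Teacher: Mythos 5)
Your argument for $\opH^\bullet(B_r,k)_{\red}$ is correct but takes a genuinely different route from the paper: the paper simply observes that $\opH^\bullet(B_r,k)_{\red}\cong k[x_1,\ldots,x_r]^{T_r}$ is the invariant ring of a regular domain under the finite (diagonalisable) group scheme $T_r$ and invokes Hochster--Roberts, whereas you use the finite free decomposition over the polynomial subring $R_0=k[x_1^{p^{r-1}},\ldots,x_r]$ from Theorem \ref{B_rcohoreduce} together with ascent of the Cohen--Macaulay property along a finite flat map. Your route is more elementary (it is essentially the Hironaka-type criterion that a graded algebra finite and free over a polynomial subring is Cohen--Macaulay), and the dimension check you worry about in point (i) is unnecessary: the fibers of a finite flat map are Artinian, hence Cohen--Macaulay, so the ascent needs no hypothesis on Krull dimensions.

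For $\opH^\bullet(G_r,k)_{\red}$ your strategy is the same as the paper's (induce the $\mathfrak{B}_r$-decomposition up to $G$, identify the induced base ring, transfer Cohen--Macaulayness along a finite extension), but two steps are not right as written. First, the identification ``$R_0\cong S^\bullet(\fraku^{*(1)}\times\cdots\times\fraku^{*(r)})$ after an evident reindexing, so $\ind_B^GR_0=R$'' fails: the generators $x_i^{p^{r-i}}$ of $R_0$ all have the same weight $p^r\alpha$ (weight $\alpha$ after the $(-r)$-untwist), while the generators of $S^\bullet(\fraku^{*(1)}\times\cdots\times\fraku^{*(r)})$ have the distinct weights $p\alpha,\ldots,p^r\alpha$; there is no $B$-equivariant isomorphism, and in fact $\ind_B^G R_0^{(-r)}\cong k[G\times^B\fraku^r]$, not $R$. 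This particular slip is repairable, since $k[G\times^B\fraku^r]$ is Cohen--Macaulay by the result of \cite{Ngo:2012} quoted in the paper's lemma, and replacing $R$ by $k[G\times^B\fraku^r]$ would even streamline the argument. Second, and more seriously, Kempf vanishing and the projection formula only give you $\opH^\bullet(G_r,k)_{\red}^{(-r)}\cong\bigoplus_{w\in\mathfrak{B}_r}\ind_B^G\bigl(\lambda_w\otimes R_0^{(-r)}\bigr)$ as a module over $\ind_B^G R_0^{(-r)}$; they do not make this module \emph{free}. For a basis element of nonzero weight $\lambda_w$ the summand is the module of sections of a line bundle on $G\times_B\fraku^r$, which need not be free over the ring of global sections (already for $r=1$, $\ind_B^G(\alpha\otimes S^\bullet(\fraku^*))$ is the ideal of positive-degree elements of $k[\N]$, which is neither free nor even a maximal Cohen--Macaulay module), and such nonzero weights do occur in $\mathfrak{B}_r$ once $r\ge 3$. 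This is exactly the point the paper's second lemma in Section \ref{Cohen-Macaulayrings} is designed to address: working over the smaller ring $R=\ind_B^G S^\bullet(\fraku^{*(1)}\times\cdots\times\fraku^{*(r)})$, it identifies each $\ind_B^G\bigl(S^\bullet(\fraku^{*(1)}\times\cdots\times\fraku^{*(r)})\otimes\lambda\bigr)$ with a degree-shifted copy of $R$, which (after refining your $R_0$-basis by a basis of $R_0^{(-r)}$ over $S^\bullet(\fraku^{*(1)}\times\cdots\times\fraku^{*(r)})$) is what produces the free-$R$-module statement the paper uses. So your proposal needs either that lemma or a substitute argument (for example, that each line-bundle-sections summand is a Cohen--Macaulay module over $k[G\times^B\fraku^r]$); as it stands, the finite-freeness claim over the induced base ring is a genuine gap.
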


\begin{proof}
Note that $\opH^{\bullet}(B_r,k)_{\red}=k[\fraku^{(1)}\times\cdots\times\fraku^{(r)}]^{T_r}$, an invariant of a regular domain by a finite group scheme $T_r$. So it is Cohen-Macaulay by a famous result of Hochster and Roberts \cite{HR:1974}.

Next by Theorem \ref{G_rcohoreduce}, we have the decomposition of $R$-modules
\begin{align*}
\opH^{\bullet}(G_r,k)_{\red} &\cong\ind_B^G\opH^{\bullet}(B_r,k)_{\red} \\
&\cong \bigoplus_{x_{\lambda}\in\mathfrak{B}_r}\ind_B^G\left( x_\lambda\otimes k[x_1,\ldots,x_r]\right).
\end{align*}
In other words, the ring $\opH^{\bullet}(G_r,k)_{\red}$ is a free $R$-module. By the previous lemma, $R$ is Cohen-Macaulay, so is $\opH^{\bullet}(G_r,k)_{\red}$.  
\end{proof}

\subsection{Open questions}
The results above imply many open questions involving the properties in commutative algebra like Cohen-Macaulayness for the objects in representation theory as follows.
\begin{conjecture}
Suppose $R$ is a $B$-algebra. If $R$ is Cohen-Macaulay, then so is $\ind_B^GR$.
\end{conjecture}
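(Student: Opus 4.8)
The plan is to translate the statement into geometry and attack it through the total space of the associated fibre bundle. First I would reduce to the essential case in which $R$ is a finitely generated graded $B$-algebra, so that $X=\spec R$ is an affine $B$-scheme; write $Y=G\times^B X$ for the fibre bundle over $G/B$ with fibre $X$, and $Z=\spec(\ind_B^G R)=\spec k[Y]$ for its affinisation, with structure morphisms $q\colon Y\to G/B$ and $\phi\colon Y\to Z$. Since $B$ is a special group in the sense of Serre (equivalently, $G\to G/B$ is Zariski-locally trivial, as one sees already from the Bruhat decomposition), the morphism $q$ is Zariski-locally trivial with fibre $X$, so $Y$ is covered by open sets isomorphic to $\A^{\,\Dim G/B}\times X$. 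Because a polynomial extension of a Cohen-Macaulay ring is Cohen-Macaulay, this shows that $Y$ is a Cohen-Macaulay scheme as soon as $R$ is a Cohen-Macaulay ring. So the entire difficulty is concentrated in a single descent step.

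The hard part will be to deduce that the ring $k[Y]=\ind_B^G R$ is Cohen-Macaulay from the fact that the (non-affine) scheme $Y$ is, i.e.\ to control the affinisation morphism $\phi\colon Y\to Z$. I would pursue two complementary routes. The cohomological route: show $\phi_*\calO_Y=\calO_Z$ and $R^i\phi_*\calO_Y=0$ for $i>0$ (equivalently $R^i\ind_B^G R=0$ for $i>0$), and combine this with the Cohen-Macaulayness of $Y$ and a local-duality argument of ``rational resolution'' type to force $Z$ to be Cohen-Macaulay; this is exactly the mechanism behind the $SL_2$ computation in Section~\ref{Cohen-Macaulayrings}, where the relevant algebra $\ind_B^G S^\bullet(\fraku^{*(1)}\times\cdots\times\fraku^{*(r)})$ carries only dominant weights and Kempf vanishing applies. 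The second route uses Frobenius splitting: $G/B$ is Frobenius split, so if $X$ can be chosen compatibly split then $Y$, and hence $Z$, is Frobenius split, and one tries to run an $F$-rationality argument descending along $\phi$.

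The main obstacle is that for a \emph{general} Cohen-Macaulay $B$-algebra there is no dominance hypothesis, so $R^i\ind_B^G R$ need not vanish (already $R=k[x]$ with $x$ of anti-dominant weight fails it), and Frobenius splitting alone does not transport Cohen-Macaulayness through a proper contraction without extra $F$-regularity input. A third, more hands-on approach mirrors the $SL_2$ proof directly: choose a $B$-equivariant homogeneous system of parameters $f_1,\dots,f_d$ for $R$ consisting of weight vectors, set $R_0=k[f_1,\dots,f_d]$ so that $R$ is finite free over $R_0$; then $\ind_B^G R$ is a finite $\ind_B^G R_0$-module, and since induction preserves direct sums one hopes it is free over $\ind_B^G R_0$, with $\ind_B^G R_0$ Cohen-Macaulay. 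Here the crux is the absence of a $B$-equivariant ``dominant Noether normalisation'': because $B$-modules are not semisimple, a $B$-stable system of parameters with prescribed dominant weights need not exist, and without dominance $\ind_B^G R_0$ is not controlled. I expect this last point to be the genuine difficulty, and it is the reason the statement is recorded as a conjecture rather than proved; the first tractable target is probably the strengthening in which $R$ is assumed $F$-rational, where the Frobenius-splitting route has the best chance of closing.
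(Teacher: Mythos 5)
This statement is not proved in the paper at all: it is posed as an open conjecture in Section \ref{Cohen-Macaulayrings}, so there is no ``paper proof'' to match, and your text, by its own admission, is a research plan rather than a proof. As a proposal it correctly locates the difficulty, but none of the three routes is closed, so the argument has a genuine gap at its central step. Concretely: the reduction showing that $Y=G\times^B X$ is Cohen--Macaulay (local triviality of $G\to G/B$ over Bruhat cells, plus invariance of Cohen--Macaulayness under polynomial extension and localization) is fine, but everything then hinges on descending Cohen--Macaulayness along the affinisation $\phi\colon Y\to Z=\spec(\ind_B^G R)$, and you do not supply a mechanism for this. The cohomological route needs $R^i\ind_B^G R=0$ for $i>0$, which, as you yourself note, fails for a general Cohen--Macaulay $B$-algebra (no dominance hypothesis is available); moreover, even when the vanishing holds, Cohen--Macaulayness of $\opH^0(Y,\calO_Y)$ does not follow formally from Cohen--Macaulayness of $Y$ --- one needs a rational/pseudo-rational singularities type argument, which is exactly the delicate point in characteristic $p$, where Grauert--Riemenschneider-style statements are unavailable without Frobenius-splitting or $F$-rationality hypotheses that the conjecture does not grant. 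The third route founders, as you say, on the non-existence in general of a $B$-equivariant homogeneous system of parameters with dominant weights, so the freeness-over-a-controlled-subring mechanism that powers the paper's $SL_2$ computation (freeness of $\opH^\bullet(G_r,k)_{\red}$ over $\ind_B^G S^\bullet(\fraku^{*(1)}\times\cdots\times\fraku^{*(r)})$, where all weights are dominant and Kempf vanishing applies) does not transfer.

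In short, your proposal is an accurate diagnosis of why the statement is hard --- and it is consistent with the fact that the author records it only as a conjecture, supported by the $SL_2$ cases computed in the paper --- but it does not constitute a proof: each of the proposed strategies is explicitly left open at the step where Cohen--Macaulayness must cross from the bundle $Y$ (or from $R$ itself) to the induced algebra $\ind_B^G R$. If you pursue this, the most promising restricted target is the one you name at the end: add an $F$-rationality (or strong $F$-regularity/compatible splitting) hypothesis on $R$ and try to descend along $\phi$ using Frobenius-splitting techniques; the general statement, with no positivity or dominance input, should be treated as genuinely open.
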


\begin{conjecture}
Let $G$ be a simple algebraic group defined over an algebraically closed field $k$ of characteristic $p$, a good prime for $G$. Then both $\opH^{\bullet}(B_r,k)_{\red}$ and $\opH^{\bullet}(G_r,k)_{\red}$ are Cohen-Macaulay. 
\end{conjecture}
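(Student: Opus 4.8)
The plan is to reduce, in both cases, to invariant rings of polynomial algebras and to the lemmas already established in this section. I would begin with $\opH^{\bullet}(B_r,k)_{\red}$. By the computation in Subsection~\ref{reduceB_rcoho}, this ring is precisely $k[x_1,\ldots,x_r]^{T_r}=k[\fraku^{(1)}\times\cdots\times\fraku^{(r)}]^{T_r}$, the invariant subring of a polynomial algebra (a regular, hence Cohen-Macaulay, domain) under the finite diagonalizable group scheme $T_r$. Since $T_r$ is linearly reductive, the Hochster-Roberts theorem \cite{HR:1974} applies and gives that this invariant ring is Cohen-Macaulay. One small point to verify is that the Hochster-Roberts conclusion is available for the infinitesimal group scheme $T_r$; this is so because the functor $(-)^{T_r}$ on rational modules is exact, which is all that is used, so no characteristic-zero hypothesis is needed.

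For $\opH^{\bullet}(G_r,k)_{\red}$ I would use Theorem~\ref{G_rcohoreduce}, which identifies it --- up to a Frobenius twist that is irrelevant for the ring-theoretic statement --- with $\ind_B^G\opH^{\bullet}(B_r,k)_{\red}$. The idea is then: (i) realize $\ind_B^G\opH^{\bullet}(B_r,k)_{\red}$ as a finitely generated free module over the ring $R=\ind_B^G\bigl(S^{\bullet}(\fraku^{*(1)}\times\cdots\times\fraku^{*(r)})\bigr)$; (ii) invoke the lemma of this section that $R$ is Cohen-Macaulay; (iii) conclude, since a ring that is a finitely generated free module over a Cohen-Macaulay subring is itself Cohen-Macaulay. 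For (i), Theorem~\ref{B_rcohoreduce} presents $\opH^{\bullet}(B_r,k)_{\red}$ as a finite free module over $R'=k[x_1^{p^{r-1}},x_2^{p^{r-2}},\ldots,x_r]$ with explicit basis $\mathfrak{B}_r$. All weights occurring here are nonnegative multiples of $\alpha$, hence dominant, so $\ind_B^G$ is exact on these modules by Kempf vanishing (as in the proof of Theorem~\ref{G_rwithlambda}); applying it to the direct-sum decomposition $\opH^{\bullet}(B_r,k)_{\red}=\bigoplus_{x_\lambda\in\mathfrak{B}_r}x_\lambda\cdot R'$ and identifying $\ind_B^G R'$ with $R$ up to regrading (using the lemma comparing $\ind_B^G S^\bullet$ of twisted symmetric algebras with a degree shift) yields the desired free $R$-module structure.

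The main obstacle, once the lemmas of this section are granted, is largely bookkeeping: one must check the exactness of $\ind_B^G$ on the relevant modules (this is where dominance of weights enters) and carefully match the grading shifts so that the $R$-module basis of $\ind_B^G\opH^{\bullet}(B_r,k)_{\red}$ comes out finite. If one wanted instead a self-contained argument for the lemma that $R$ is Cohen-Macaulay, the genuinely substantive input would be the Cohen-Macaulayness of $k[G\times^B\fraku^r]$ together with the descent of that property along the faithfully flat extension $R\hookrightarrow k[G\times^B\fraku^r]$ --- which holds because $k[\fraku^r]$ is free over $k[\fraku^{*(1)}\times\cdots\times\fraku^{*(r)}]$ via the coordinatewise power map and $\ind_B^G$ preserves direct sums; this descent step is the one I would expect to be the most delicate, and I would make sure the flatness is faithful so that a local-to-global argument on $\spec R$ applies.
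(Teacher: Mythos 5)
Your argument does not prove the statement at hand: the statement is the paper's \emph{conjecture} for an arbitrary simple algebraic group $G$ in good characteristic, whereas every input you use is specific to $G=SL_2$. What you have written is essentially the paper's own proof of Theorem \ref{Cohen-Macaulay for B_r and G_r} (the $SL_2$ case): Hochster--Roberts applied to $k[x_1,\ldots,x_r]^{T_r}$ for the $B_r$-ring, and freeness of $\ind_B^G\opH^\bullet(B_r,k)_{\red}$ over $R=\ind_B^G S^\bullet(\fraku^{*(1)}\times\cdots\times\fraku^{*(r)})$ together with the lemma that $R$ is Cohen--Macaulay for the $G_r$-ring. That is correct as far as it goes, but it does not touch the general conjecture, and the paper itself explicitly leaves it open precisely because the inputs are unavailable beyond rank one.

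Concretely, the steps that fail (or are simply unknown) for general $G$ and $r\ge 2$ are the following. The identification $\opH^\bullet(B_r,k)_{\red}\cong k[\fraku^{(1)}\times\cdots\times\fraku^{(r)}]^{T_r}$ rests on the collapse of the spectral sequence \eqref{U_rspectralsequence}, which is proved in Proposition \ref{U_rwithlambda} by a weight argument that works only because $\fraku$ is one-dimensional, so each $E_1$-column is a single $T$-weight space; for higher rank the differentials need not vanish and $\opH^\bullet(U_r,k)$, $\opH^\bullet(B_r,k)$ are not known. Likewise, Theorem \ref{G_rcohoreduce} and your step (i) use Kempf vanishing via the fact that \emph{all} weights of $\opH^\bullet(B_r,k)$ are dominant, which is read off from the explicit $SL_2$ computation and is not available (and not expected without restrictions on $p$) in general; the same dominance hypothesis is needed in Lemma \ref{lemma of reduceG_r}. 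Finally, the finite free basis $\mathfrak{B}_r$ of $\opH^\bullet(B_r,k)_{\red}$ over $k[x_1^{p^{r-1}},\ldots,x_r]$ comes from Theorem \ref{B_rcohoreduce}, again an $SL_2$-specific computation. So the proposal establishes the known special case but leaves the conjectured statement exactly as open as before; to claim the conjecture you would need, at minimum, a description (or at least structural control) of $\opH^\bullet(B_r,k)_{\red}$ and of $\opH^\bullet(G_r,k)_{\red}$ for higher Frobenius kernels of an arbitrary simple group, which is the heart of the open problem.
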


Evidently, if $r=1$ then the results of Andersen and Jantzen \cite{AJ:1984}, Friedlander and Parshall \cite{FP:1986} show that $\opH^{\bullet}(B_r,k)_{\red}\cong S^\bullet(\fraku^*)$ is regular and $\opH^{\bullet}(G_r,k)_{\red}\cong k[\N]$ is Cohen-Macaulay. Although our computation supports the conjecture in the case $G=SL_2$ for arbitrary $r$, it is a difficult problem as very little appears to be known about cohomology of higher Frobenius kernels.

\section{Quantum calculations}
In this section, we apply our methods to compute cohomology for the Frobenius--Lusztig kernels of quantum groups defined in \cite{Dru:2011}. We first recall the definitions as follows. (Details can be found in \cite[2.1-2.2]{Dru:2011}.)

\subsection{Notation and Construction}\label{quantum-notation} We basically recover the material in \cite[Section 2.1]{Dru:2011} in the case $\g=\fraksl_2$. Let $k$ be a field of characteristic $p\ne 2$. Let $\ell$ be an odd positive integer not divisible by $p$. Denote by $\Phi=\{\alpha,-\alpha\}$ the root system of $\fraksl_2$.  

Let $q$ be an indeterminate and let $\Uq$ be the quantized enveloping algebra associated to $\fraksl_2$, which is the $\mathbb{Q}(q)$-algebra defined by the generators $E, F,K,K^{-1}$ and satisfying the relations 
\begin{align*}
KK^{-1}=1 &=K^{-1}K,\\
KEK^{-1} &= q^2E, \\
KFK^{-1} &= q^{-2}F, \\
EF-FE &=\frac{K - K^{-1}}{q-q^{-1}}.
\end{align*}

Set $\upA=\Z[q,q^{-1}]$. For given integer $i$, set
\[ [i]=\frac{q^i-q^{-i}}{q-q^{-1}},\]
then denote $[i]^!=[i][i-1]\ldots[1]$. Note that $[0]^!=1$ as a convention. Suppose $m$ is a positive integer and $n$ is an integer, we write
\[ \left[ \begin{array}{c} n \\ m \end{array} \right]=\frac{[n][n-1]\dots[n-m+1]}{[1][2]\ldots[m]} \]
where $\left[ \begin{array}{c} n \\ 0 \end{array} \right]=1$. Next, for each $m\ge 0$, we define the $m$-th divided powers as follows:
\[ E^{(m)}=\frac{E^m}{[m]^!}\quad,\quad F^{(m)}=\frac{F^m}{[m]^!}. \]
Now the Lusztig $\upA$-form quantized enveloping algebra $\Ua$ is defined as the $\upA$-subalgebra of $\Uq$ generated by $\{E^{(n)},F^{(n)},K^{\pm 1}:~n\in\mathbb{N}\}$. Fix $\zeta\in k$ a primitive $\ell$-th root of unity in $k$. We consider $k$ as $\upA$-algebra by the homomorphism $\Z[q,q^{-1}]\rightarrow k$ mapping $q\mapsto\zeta$. Let
\[ \Uz=\frac{U_k}{\left< K^{\ell}\otimes 1 - 1\otimes 1\right>} \]
where $U_k=\Ua\otimes_{\upA}k$. Denote by $u_k$ the Hopf subalgebra of $U_k$ generated by $\{ E, F, K \}$. Let $\uz$ be the image of $u_k$ in $\Uz$, and call it the small quantum group.

For each $r\ge 1$, we define $\Uz(G_r)$ to be the subalgebra of $\Uz$ generated by 
\[ \{E, E^{(p^i\ell)}, F, F^{(p^i\ell)}, K:~0\le i\le r-1\},\]
and call it the $r$-th Frobenius-Lusztig kernel of $\Uz$. Note that if $p=0$ then we obtain for every $r\ge 1$ that $\Uz(G_r)=\uz$. We also define for each $r\ge 1$
\begin{align*}
\Uz(B_r) &= \Uz(B)\cap\Uz(G_r) = \left< E , E^{(p^i\ell)}, K~:~0\le i\le r-1\right>, \\
\Uz(U_r) &= \Uz(U)\cap\Uz(G_r) = \left< E, E^{(p^i\ell)}~:~0\le i\le r-1\right>.
\end{align*}

Let $G=SL_2$, a group scheme defined over $k$. Let $\Dist(G)$ be the algebra of distributions on $G$. It is known that there is an isomorphism of Hopf algebras between $\Uz//\uz$ and $\Dist(G)$. The quotient map $F_{\zeta}:\Uz\to\Dist(G)$ is the quantum analog of the Frobenius homomorphism. Note that the restriction $F_{\zeta}:\Uz(T_r)\to\Dist(T_r)$ for each $r\ge 1$ induces an isomorphism $\Uz(T_r)//\uz^0\cong\Dist(T_r)$.

\subsection{Cohomology of $\Uz(U_r)$} It is observed that for each $r\ge 1$, $\Uz(U_r)$ is a normal subalgebra of $\Uz(B_r)$. There is a right adjoint action of $\Uz^0$ on $\Uz(B_r)$ such that $\Uz(U_r)$ is a $\Uz^0$-submodule of $\Uz(B_r)$. Then from \cite[Theorem 4.3.1]{Dru:2011}, the cohomology $\opH^\bullet(\Uz(U_r),k)$ is a left $\Uz^0$-module. Before computing this module stucture, we need to revise some notation in Section \ref{notation}. Let $S^\bullet(x_0,\ldots,x_r)$ be the symmetric algebra over $x_0,\ldots,x_r$ of degree 2. Then this symmetric algebra can be considered as a $\Uz^0$-module by assigning weight $p^{i}\ell\alpha$ to $x_{i}$ for each $0\le i\le r$. Let $\Lambda^{\bullet}(y_0,\ldots,y_r)$ be the exterior algebra generated by $y_0,\ldots,y_r$ of degree 1. By assigning weight $\alpha$ to $y_0$ and weight $p^{i-1}\ell\alpha$ to $y_i$ for all $1\le i\le r$, we obtain the $\Uz^0$-module structure of $\Lambda^{\bullet}(y_0,\ldots,y_r)$. Now we compute the cohomology of $\Uz(U_r)$ as follows.

\begin{theorem}\label{U_rquantumcohomology}
For each $r\ge 1$, there is an isomorphism of $\Uz^0$-algebras
\[ \opH^\bullet(\Uz(U_r),k)\cong S^\bullet(x_0,\ldots,x_{r})\otimes\Lambda^\bullet(y_0,\ldots,y_{r}).\]   
\end{theorem}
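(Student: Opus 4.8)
The plan is to mirror the classical computation of $\opH^\bullet(U_r,k)$ for the Frobenius kernel of $SL_2$ in the quantum setting, using the quantum analog of the spectral sequence of \cite[I.9.14]{Jan:2003}. First I would invoke the structural results of Drupieski \cite{Dru:2011}: since $\Uz(U_r)$ is built out of the divided power generators $E, E^{(p^i\ell)}$ for $0 \le i \le r-1$, it admits a filtration whose associated graded is (a quantum version of) a truncated polynomial algebra, and there is a May-type or Lyndon--Hochschild--Serre spectral sequence relating $\opH^\bullet(\Uz(U_r),k)$ to the cohomology of the ``one-step'' pieces. Concretely, I expect an $E_1$-page of the form
\[
E_1 = S^\bullet(x_0,\ldots,x_r) \otimes \Lambda^\bullet(y_0,\ldots,y_r),
\]
with the $x_i$ in cohomological degree $2$ and the $y_i$ in degree $1$, exactly as in \cite[Theorem 4.3.1]{Dru:2011}, and carrying the $\Uz^0$-weights specified just before the theorem statement (weight $p^i\ell\alpha$ on $x_i$, weight $\alpha$ on $y_0$ and $p^{i-1}\ell\alpha$ on $y_i$ for $i \ge 1$).

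The second step is the weight-counting collapse argument, which is the real engine here and is borrowed directly from the proof of Proposition \ref{U_rwithlambda}. Every differential $d_n$ in the spectral sequence is $\Uz^0$-equivariant, hence preserves weights; but a differential $E_n^{i,j} \to E_n^{i+n,j-n+1}$ would have to connect a summand of one weight to a summand whose weight differs by the ``cohomological shift'' built into the indexing. Tracking the bijection between cohomological degree and $\Uz^0$-weight (the weight of a monomial $\prod x_i^{a_i} \prod y_j^{b_j}$ is $\bigl(\sum a_i p^i\ell + b_0 + \sum_{j\ge 1} b_j p^{j-1}\ell\bigr)\alpha$, and no two distinct degrees can share a weight in the relevant range), one concludes that every higher differential vanishes and the spectral sequence collapses at $E_1$. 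Then $\opH^\bullet(\Uz(U_r),k) \cong E_1 = S^\bullet(x_0,\ldots,x_r)\otimes\Lambda^\bullet(y_0,\ldots,y_r)$ as graded $\Uz^0$-modules.

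The third step is upgrading the graded module isomorphism to an isomorphism of $\Uz^0$-algebras. For this I would note that, because the spectral sequence is multiplicative and collapses at $E_1$, the ring structure on $\opH^\bullet(\Uz(U_r),k)$ is determined by that of the $E_1$-page up to a possibly nontrivial filtration; one must check there are no ``hidden'' extension-type products mixing the $x$'s and $y$'s. Here again the $\Uz^0$-weight grading does the work: any correction term in a product $x_i x_j$ or $y_i y_j$ would have to lie in the same weight space, and degree plus weight considerations (together with the fact that $S^\bullet \otimes \Lambda^\bullet$ already exhausts the available weight spaces in each cohomological degree) force the product to agree on the nose with the obvious one. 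This is precisely analogous to the remark following Proposition \ref{U_rwithlambda} that the classical isomorphism $\opH^\bullet(U_r,k) \cong S^{\bullet(1)}\otimes\cdots\otimes\Lambda^{\bullet(r-1)}$ respects the algebra structure.

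The main obstacle I anticipate is the very first step: setting up the correct quantum spectral sequence and identifying its $E_1$-term with the stated tensor product of a symmetric and an exterior algebra, with the correct $\Uz^0$-module structure. Unlike the Frobenius-kernel case, where one has the ready-made spectral sequence of \cite[I.9.14]{Jan:2003}, in the quantum world one must work from Drupieski's May spectral sequence (or the cohomology computation in \cite[Section 4]{Dru:2011}) and carefully keep track of which generators are ``$E$-type'' (contributing a degree-$1$ exterior generator $y_0$ of weight $\alpha$, reflecting that $E$ itself is a nilpotent element of order $\ell$) versus ``divided-power type'' $E^{(p^i\ell)}$ (contributing a degree-$2$ polynomial generator $x_i$ together with a degree-$1$ exterior companion $y_{i+1}$, paralleling the $S^{a(i)}\otimes\Lambda^{b(i)}$ pattern in \eqref{U_rspectralsequence}). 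Once the $E_1$-page and its weights are correctly in hand, the collapse and the algebra-structure refinement are routine weight-bookkeeping, identical in spirit to the arguments already given for $U_r$.
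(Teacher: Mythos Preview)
Your approach is correct in spirit but considerably more elaborate than the paper's, and it misses the key rank-one simplification that makes this theorem nearly immediate.

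The paper does not run any spectral sequence at all. Instead it observes that because $\Phi^+ = \{\alpha\}$, the algebra $\Uz(U_r)$ already \emph{coincides} with its associated graded $\mathrm{gr}\,\Uz(U_r)$: with only one positive root there are no nontrivial PBW reordering relations among the generators $E, E^{(p^i\ell)}$, so the filtration used to define $\mathrm{gr}$ is trivial. Having identified $\Uz(U_r)$ with an explicit algebra of the type treated by Mastnak--Pevtsova--Schauenburg--Witherspoon, the paper then simply quotes \cite[Theorem 4.1, Remark 4.2]{MPSW:2010} to read off the cohomology as $S^\bullet(x_0,\ldots,x_r)\otimes\Lambda^\bullet(y_0,\ldots,y_r)$, algebra structure and all. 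No collapse argument, no extension problem.

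Your route via a May-type spectral sequence is essentially what one must do in higher rank (and is what Drupieski does in \cite[\S6]{Dru:2011}), so it is not wrong, but here it is overkill: the May spectral sequence from $\opH^\bullet(\mathrm{gr}\,\Uz(U_r),k)$ to $\opH^\bullet(\Uz(U_r),k)$ degenerates trivially because the two algebras are equal. Your weight-collapse step is also a bit loose as written---the claim that ``no two distinct degrees can share a weight'' is false (e.g.\ $x_0$ and $y_1$ both have weight $\ell\alpha$); the correct statement is that weight is determined by the \emph{column} index $i$, not by total degree, and one must first pin down exactly what $i$ records in the quantum spectral sequence before the argument goes through. All of this bookkeeping evaporates once you notice $\Uz(U_r)=\mathrm{gr}\,\Uz(U_r)$.
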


\begin{proof}
Note that $\Uz(U_r)$ is in this case the same as gr$\Uz(U_r)$, the associated graded algebra of $\Uz(U_r)$. According to \cite{Dru:2011}, it is a $k$-algebra generated by $E_\alpha$ and $E_\alpha^{(p^i\ell)}$ for all $1\le i\le r-1$, subject to the relations (6.1.2), (6.1.3), and (6.1.4) in \cite{Dru:2011} applied with $\Phi^+=\{\alpha\}$. Hence, by Theorem 4.1 and Remark 4.2 in \cite{MPSW:2010}, we obtain the isomorphism as desired.
\end{proof}

\begin{remark}
This result is a special case of \cite[Proposition 6.2.2]{Dru:2011} in which Drupieski computed the cohomology of gr$\Uz(U_r)$ in a more general context.
\end{remark}

\subsection{Cohomology of $\Uz(B_r)$} Since $\Uz(B_r)//\Uz(U_r) \cong \Uz(T_r)$, for each $r \geq 1$, there exists a Lyndon-Hochschild-Serre spectral sequence
\[ E_2^{i,j}=\opH^i(\Uz(T_r), \opH^j(\Uz(U_r),k))\Rightarrow \opH^{i+j}(\Uz(B_r),k). \]
As $\Uz(T_r)$ is a semisimple Hopf algebra, the spectral sequence collapses at the second page and then we obtain the following algebra isomorphism
\[ \opH^\bullet(\Uz(B_r),k))\cong \opH^0(\Uz(T_r),\opH^{\bullet}(\Uz(U_r),k))=\opH^{\bullet}(\Uz(U_r),k)^{\Uz(T_r)}. \]
Note that the $\Uz^0$-module stucture is preserved via this isomorphism. From Theorem \ref{U_rquantumcohomology}, first taking the $\uz^0$-invariant of $\opH^{\bullet}(\Uz(U_r),k)$, we have
\begin{align*}
\opH^{\bullet}(\Uz(U_r),k)^{\uz^0}&\cong\left(\Lambda^\bullet(y_0)\right)^{\uz^0}\otimes S^\bullet(x_0,\ldots,x_r)\otimes\Lambda^\bullet(y_1,\ldots,y_r) \\
&= S^\bullet(x_0,\ldots,x_r)\otimes\Lambda^\bullet(y_1,\ldots,y_r)
\end{align*}
since $\Lambda^\bullet(y_0)^{\uz^0}=\Lambda^\bullet(y_0^\ell)=k$. Note also that each generator's weight in $\Lambda^\bullet$ is divided by $\ell$ under the Frobenius homomorphism. These results can be enclosed in computing the $\Uz(T_r)$-invariant of $\Uz(U_r)$-cohomology as follows.
\begin{align*}
\opH^{\bullet}(\Uz(U_r),k)^{\Uz(T_r)}&\cong\left( \opH^{\bullet}(\Uz(U_r),k)^{\uz^0} \right)^{\Uz(T_r)//{\uz^0}}\\
&\cong\left( S^\bullet(x_0,\ldots,x_r)\otimes\Lambda_{\zeta}^\bullet(y_1,\ldots,y_r)\right)^{\Uz(T_r)//{\uz^0}} \\
&\cong \left( S^\bullet(x'_0,x'_1,\ldots,x'_r)\otimes\Lambda^\bullet(y'_1,\ldots,y'_r) \right)^{\Dist(T_r)}
\end{align*}
where $x'_0$ is of weight $\alpha$, and each $x'_{i}$ (or $y'_{i+1}$) is of weight $p^{i}\alpha$ for all $0\le i\le r$. Now we can use an analogous argument as in Subsection \ref{B_rcohomology} to compute the $\Uz(B_r)$-cohomology.
  
\begin{theorem}\label{B_r-quantumcohomology}
For each $r\ge 1$, there is an isomorphism of $T$-modules
\[ \opH^\bullet(\Uz(B_r),k)^{(-r)}\cong\bigoplus_{n\in\mathbb{N}}(n\alpha)^{N'_r(p,n)} \]
where $N'_r(p,n)$ is the number of solutions $(a_0,\ldots,a_r,b_1,\ldots,b_r)\in\mathbb{N}^{r+1}\times\{0,1\}^r$ for the equation
\[ (a_0+b_1)+p(a_1+b_2)+\cdots+p^{r-1}(a_{r-1}+b_r)+p^ra_r=\frac{n}{2}p^{r}.\] 
\end{theorem}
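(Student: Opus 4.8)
The plan is to mimic the proof of Proposition \ref{B_rwithlambda} almost verbatim, replacing the algebraic group $B_r$ by its quantum analog $\Uz(B_r)$ and using the preceding computation of $\opH^\bullet(\Uz(B_r),k)$ as a $\Dist(T_r)$-invariant of an explicit polynomial--exterior algebra. Concretely, we have already shown
\[ \opH^\bullet(\Uz(B_r),k)\cong\left( S^\bullet(x'_0,x'_1,\ldots,x'_r)\otimes\Lambda^\bullet(y'_1,\ldots,y'_r) \right)^{\Dist(T_r)}, \]
so the first step is to decompose the right-hand side into weight spaces before taking invariants: a monomial $x_0'^{a_0}x_1'^{a_1}\cdots x_r'^{a_r}y_1'^{b_1}\cdots y_r'^{b_r}$ with $a_i\in\mathbb{N}$ and $b_i\in\{0,1\}$ carries the weight $\bigl(a_0+b_1+p(a_1+b_2)+\cdots+p^{r-1}(a_{r-1}+b_r)+p^ra_r\bigr)\alpha$, reading off the weights assigned to the generators in the previous subsection.

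The second step is to identify which monomials survive the $\Dist(T_r)=\mathrm{Dist}((T)_r)$-invariant functor. Exactly as in Section \ref{B_rcohomology}, $\Dist(T_r)$-invariance of a weight-$\mu\alpha$ vector is equivalent to $\mu\alpha$ lying in $p^r X$ (i.e. being $p^r$ times a weight of the Frobenius-twisted torus), which for the fundamental weight $\omega=\alpha/2$ translates into $\mu\in\tfrac{p^r}{2}\mathbb{N}$, i.e. $\mu=\tfrac{n}{2}p^r$ for some $n\in\mathbb{N}$. Thus a monomial contributes to $\opH^\bullet(\Uz(B_r),k)^{(-r)}$ in weight $n\alpha$ (after untwisting) precisely when
\[ (a_0+b_1)+p(a_1+b_2)+\cdots+p^{r-1}(a_{r-1}+b_r)+p^ra_r=\frac{n}{2}p^r, \]
and counting the solution tuples $(a_0,\ldots,a_r,b_1,\ldots,b_r)\in\mathbb{N}^{r+1}\times\{0,1\}^r$ gives the multiplicity $N'_r(p,n)$. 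Summing over $n$ yields the claimed $T$-module isomorphism. I would also note the parity constraint (the left side is a non-negative integer, and $\tfrac{n}{2}p^r$ is an integer only when $n$ is even, since $p$ is odd here — or one absorbs the odd case by the same shift trick used in Section \ref{character multiplicity}); the cleanest statement simply lets $N'_r(p,n)=0$ for the impossible parities.

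The main obstacle, such as it is, is bookkeeping rather than conceptual: one must be careful that the weight of $y'_0$/$x'_0$ (which comes from the degree-one generator $y_0$ whose $\uz^0$-invariants already killed the $\ell$-torsion) is $\alpha$ and not $p\alpha$, so that the ``$x_0$'' variable in the quantum case plays the role that the coefficient ``$m/2+b_1$'' played in Proposition \ref{B_rwithlambda}; the index $r+1$ in $\mathbb{N}^{r+1}$ versus the $r$ in the classical statement reflects exactly this extra generator surviving the $\uz^0$- but not the full $\Dist(T_r)$-quotient. Once the weights of all generators are pinned down the rest is the same collapsing-and-counting argument. The only other point worth a sentence is that, just as in Proposition \ref{B_rwithlambda}, the isomorphism is automatically $T$-equivariant (equivalently $\Uz^0//\uz^0 \cong \Dist(T)$-equivariant) because the unipotent part acts trivially on the cohomology of the abelian $\Uz(U_r)$, so no extra work is needed to upgrade the weight-multiplicity statement to a $T$-module isomorphism.
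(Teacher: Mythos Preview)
Your proposal is correct and follows essentially the same route as the paper: the text preceding the theorem already reduces $\opH^\bullet(\Uz(B_r),k)$ to $\bigl(S^\bullet(x'_0,\ldots,x'_r)\otimes\Lambda^\bullet(y'_1,\ldots,y'_r)\bigr)^{\Dist(T_r)}$ and then explicitly instructs the reader to ``use an analogous argument as in Subsection \ref{B_rcohomology},'' which is exactly the weight-by-weight monomial count you carry out. Your bookkeeping remarks about the extra generator $x'_0$ (accounting for the shift from $\mathbb{N}^r$ to $\mathbb{N}^{r+1}$) and the parity constraint on $n$ are accurate and match the paper's Remark following the theorem.
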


\begin{remark}
Comparing with the equation \eqref{lambda-eqn} in Section \ref{cohomology}, we have 
\[ N'_r(p,n)=\sum_{m=0}^\infty N_r(p,m,n). \]
It is also observed that $N_r(p,m,n)=0$ when $m>np^r$ so the sum above is well-defined and $N'_r(p,n)$ is finite. 
\end{remark}

\subsection{Cohomology of $\Uz(G_r)$} We first need to develop a spectral sequence which is similar to the one in Subsection \ref{G_rspectralsequence}.

\begin{lemma}
Let $M$ be a $\Uz(B)$-module such that $R^i\ind_{\Uz(B)}^{\Uz}M=0$ for each $i\ge 1$. Then there is for each $r\ge 1$ a spectral sequence of $G$-modules
\[ R^i\ind_{B}^{G}\opH^j(\Uz(B_r),M)^{(-r)}\Rightarrow\left( \opH^{i+j}(\Uz(G_r),\ind_{\Uz(B)}^{\Uz}M)\right)^{(-r)}. \]
\end{lemma}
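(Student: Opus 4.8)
The plan is to mimic the construction of the classical spectral sequence recalled in Subsection~\ref{G_rspectralsequence}, which came from \cite[II.12.2]{Jan:2003}, transporting it to the quantum setting via Drupieski's framework in \cite{Dru:2011}. The essential input is that the quantum Frobenius map $F_\zeta:\Uz\to\Dist(G)$ identifies $\Dist(G)$ with $\Uz//\uz$, and likewise $\Uz(G_r)//\uz\cong\Dist(G_r)$, $\Uz(B_r)//\uzb\cong\Dist(B_r)$, with the analogous identifications on the torus level noted at the end of Subsection~\ref{quantum-notation}. Thus a $\Uz$-module on which $\uz$ (respectively $\Uz(G_r)$) acts trivially is the same thing as a $\Dist(G)$-module (respectively a module that is inflated from $G/G_r\cong G^{(r)}$), which is precisely what the Frobenius twist/untwist notation $(-)^{(-r)}$ encodes. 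This is the quantum analog of the dictionary ``$G_r$-trivial $G$-modules $=$ modules twisted by $F_r$'' used throughout Section~\ref{cohomology}.

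First I would set up the Grothendieck spectral sequence for the composition of functors. Since $\Uz(B_r)$ is normal in $\Uz(B)$ with quotient $\Uz(B)//\Uz(B_r)\cong\Dist(B^{(r)})$ (and similarly for $G$), one has, for any $\Uz(G_r)$-module $N$, a factorization
\[
\Hom_{\Uz}(k,-)\cong\Hom_{\Dist(G)}(k,-)\circ(-)^{\Uz(G_r)}
\]
on the relevant module categories, and dually the induction functor $\ind_{\Uz(B)}^{\Uz}$ factors through $\ind_B^G$ after taking $\Uz(B_r)$-cohomology. The hypothesis $R^i\ind_{\Uz(B)}^{\Uz}M=0$ for $i\ge 1$ guarantees that $\ind_{\Uz(B)}^{\Uz}M$ has the correct cohomology, so one may run the spectral sequence of the composite functor ``take $\Uz(B_r)$-cohomology, then induce from $B$ to $G$'' computing $\Uz(G_r)$-cohomology of $\ind_{\Uz(B)}^{\Uz}M$. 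Untwisting by $F_r$ (legitimate because $\Uz(G_r)$, hence $\Uz(B_r)$, acts trivially on the relevant outer layers) converts the $\Dist(G^{(r)})$-action into an honest $G$-action, yielding exactly
\[
R^i\ind_B^G\opH^j(\Uz(B_r),M)^{(-r)}\Rightarrow\left(\opH^{i+j}(\Uz(G_r),\ind_{\Uz(B)}^{\Uz}M)\right)^{(-r)}.
\]

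The technical points that need genuine care, rather than formal nonsense, are: (i) the Lyndon--Hochschild--Serre type spectral sequence for the normal Hopf subalgebra $\Uz(G_r)\subseteq\Uz$ needs the exactness of $\ind_{\Uz(B)}^\Uz$ implied by the vanishing hypothesis, plus the compatibility of quantum induction with the classical one under $F_\zeta$ --- this is the place where one must invoke the relevant results of \cite{Dru:2011} (the tensor identity and transitivity of induction for the Frobenius--Lusztig kernels); (ii) checking that the identification $\Uz(G_r)//\uz\cong\Dist(G_r)$ and the compatible statement for $\Uz(B_r)$ make the ``outer'' cohomology/induction genuinely a $G$-module functor after untwisting, so that the spectral sequence is one of $G$-modules as claimed. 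I expect step (ii) --- tracking the module structures through the two successive quotients $\uz\subseteq\Uz(G_r)\subseteq\Uz$ and the corresponding Frobenius untwists --- to be the main obstacle, though it is bookkeeping rather than a conceptual difficulty; the existence of the spectral sequence itself is formal once the vanishing hypothesis is in place.
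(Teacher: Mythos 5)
Your proposal is essentially the paper's own argument: the paper likewise forms the two composite functors $\ind_B^G\circ\bigl((-)^{\Uz(B_r)}\bigr)^{(-r)}$ and $\bigl((-)^{\Uz(G_r)}\bigr)^{(-r)}\circ\ind_{\Uz(B)}^{\Uz}$, writes the Grothendieck spectral sequences for both (via \cite[Proposition I.4.1]{Jan:2003}), proves they are naturally isomorphic by passing through $\uz$-invariants (Drupieski's Theorem 5.1.1 together with the classical identity of Andersen--Jantzen \cite[3.1]{AJ:1984}), and uses the hypothesis $R^i\ind_{\Uz(B)}^{\Uz}M=0$ to collapse the second and identify the common abutment with $\opH^{i+j}(\Uz(G_r),\ind_{\Uz(B)}^{\Uz}M)^{(-r)}$. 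Your sketch identifies exactly these ingredients (the composite-functor spectral sequence, the quantum/classical induction compatibility under $F_\zeta$, and the twist bookkeeping), so it follows the same route as the paper.
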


\begin{proof}
Note that the category of rational $B$-modules (resp. $G$-modules) is equivalent to the category of integrable $\Dist(B)$-modules (resp. locally finite $\Dist(G)$-modules) \cite[6.9, 9.4]{CPS:1980}. Identifying $\Dist(H)$ with $\Uz(H)//\uz$ via the restriction of Frobenius homomorphism $F_{\zeta}$ where $H$ is $G, B, G_r,$ or $B_r$. Let 
\begin{align*}
\calF_1(-) &=\ind_B^G(-)\circ(-)^{\Uz(B_r)(-r)},\\
\calF_2(-)&=(-)^{\Uz(G_r)(-r)}\circ\ind_{\Uz(B)}^{\Uz}(-)
\end{align*}
being functors from the category of $\Uz(B)$-modules to the category of rational $G$-modules. From \cite[Proposition I.4.1]{Jan:2003}, we have the following spectral sequences
\begin{align}
R^i\ind_{B}^{G}\opH^j(\Uz(B_r),M)^{(-r)} &\Rightarrow R^{i+j}\calF_1(M),\label{spectral 1}\\
\opH^{i}(\Uz(G_r),R^j\ind_{\Uz(B)}^{\Uz}M)^{(-r)} &\Rightarrow R^{i+j}\calF_2(M).\label{spectral 2}
\end{align}
On the other hand, we consider
\[ \ind_B^G(M^{\Uz(B_r)})^{(-r)}\cong \ind_B^G(M^{\uz(B)})^{\Uz(B_r)//\uz(B)(-r)}\cong \ind_B^G(M^{\uz(B)})^{B_r(-r)}\]
which is isomorphic to the functor $\left(\ind_B^GM^{\uz(B)}\right)^{G_r(-r)}$ by \cite[3.1]{AJ:1984}. In addition, we have
\begin{align*}
\left(\ind_B^GM^{\uz(B)}\right)^{G_r} &\cong \left(\ind_B^GM^{\uz(B)}\right)^{\Uz(G_r)//\uz} \\
&\cong \left[ \left(\ind_{\Uz(B)}^{\Uz}M\right)^{\uz}\right]^{\Uz(G_r)//\uz}\\
&\cong \left(\ind_{\Uz(B)}^{\Uz}M\right)^{\Uz(G_r)}
\end{align*}
where the second isomorphism is because of Theorem 5.1.1 in \cite{Dru:2011}. In summary, the two functors $\calF_1$ and $\calF_2$ are naturally isomorphic. Hence, the spectral sequences \eqref{spectral 1}, \eqref{spectral 2} converge to the same abutment. Since $R^i\ind_{\Uz(B)}^{\Uz}M=0$ for each $i\ge 1$, the second spectral sequence collapses and then gives us the desired spectral sequence.
\end{proof}

\begin{remark}\label{cup-product}
As the isomorphism between $\calF_1$ and $\calF_2$ respects the cup-products, similar argument as in \cite[Remark 3.2]{AJ:1984} shows that the spectral sequence in the lemma above is compatible with the $\opH^\bullet(\Uz(G_r),k)$-module structure.
\end{remark}

Now we can compute the cohomology of $\Uz(G_r)$.

\begin{theorem}
There are for each $r\ge 1$ $G$-module isomorphisms
\[ \opH^{\bullet}(\Uz(G_r),k)^{(-r)}\cong\ind_B^G\opH^\bullet(\Uz(B_r),k)^{(-r)}\cong\bigoplus_{n\in\mathbb{N}}\ind_B^G(n\alpha)^{N'_r(p,n)}\]
in which the first isomorphism is of graded $k$-algebras.
\end{theorem}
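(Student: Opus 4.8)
The plan is to apply the spectral sequence from the preceding lemma with $M = k$ (the trivial one-dimensional $\Uz(B)$-module), exactly mirroring the proof of Theorem~\ref{G_rwithlambda} in the classical setting. First I would check the hypothesis $R^i\ind_{\Uz(B)}^{\Uz}k = 0$ for $i \geq 1$: this is the quantum analogue of Kempf vanishing, which follows from the identification of rational $B$-modules with integrable $\Dist(B)$-modules together with the isomorphism $\Dist(B) \cong \Uz(B)//\uz$, reducing the vanishing to $R^i\ind_B^G k = 0$ for $i \geq 1$, a standard fact. With the hypothesis in hand, the lemma supplies
\[ E_2^{i,j} = R^i\ind_B^G \opH^j(\Uz(B_r),k)^{(-r)} \Rightarrow \left(\opH^{i+j}(\Uz(G_r),k)\right)^{(-r)}. \]

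Next I would argue that this spectral sequence collapses at the $E_2$-page. By Theorem~\ref{B_r-quantumcohomology}, every weight of $\opH^\bullet(\Uz(B_r),k)^{(-r)}$ is of the form $n\alpha$ with $n \geq 0$, hence dominant. Therefore $R^i\ind_B^G$ of each graded piece vanishes for $i > 0$ by Kempf vanishing, so $E_2^{i,j} = 0$ for $i > 0$ and the edge map gives the first isomorphism
\[ \opH^\bullet(\Uz(G_r),k)^{(-r)} \cong \ind_B^G \opH^\bullet(\Uz(B_r),k)^{(-r)}. \]
The second isomorphism then follows by plugging in the explicit $T$-module decomposition $\opH^\bullet(\Uz(B_r),k)^{(-r)} \cong \bigoplus_{n\in\mathbb{N}}(n\alpha)^{N'_r(p,n)}$ from Theorem~\ref{B_r-quantumcohomology} and using that induction commutes with direct sums.

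Finally, for the claim that the first isomorphism is one of graded $k$-algebras, I would invoke Remark~\ref{cup-product}: since the natural isomorphism $\calF_1 \cong \calF_2$ respects cup products, the spectral sequence is compatible with the $\opH^\bullet(\Uz(G_r),k)$-module structure, and the argument of \cite[Remark 3.2]{AJ:1984} upgrades the collapse isomorphism to a ring isomorphism. The only point requiring genuine care — and the main obstacle — is verifying the quantum Kempf vanishing $R^i\ind_{\Uz(B)}^{\Uz}k = 0$ cleanly within the Frobenius-kernel formalism of \cite{Dru:2011}, i.e.\ making sure the identifications $\Dist(B) \cong \Uz(B)//\uz$ and the equivalence of module categories transport the vanishing statement correctly; once that is settled, the remaining steps are formally identical to the characteristic-$p$ case treated earlier in the paper.
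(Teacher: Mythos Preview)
Your overall strategy---apply the preceding lemma with $M=k$, check the hypothesis, collapse the spectral sequence via Kempf vanishing on the dominant weights coming from Theorem~\ref{B_r-quantumcohomology}, and invoke Remark~\ref{cup-product} for the ring structure---is exactly the paper's proof.

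The one substantive discrepancy is your justification of $R^i\ind_{\Uz(B)}^{\Uz}k=0$ for $i\ge 1$. You propose to reduce this to the classical $R^i\ind_B^G k=0$ via the identification $\Dist(B)\cong\Uz(B)//\uz$, but this does not work: the functor $\ind_{\Uz(B)}^{\Uz}$ is induction between the \emph{full} quantum algebras, not between their quotients by $\uz$, and passing to the quotient does not turn $R^i\ind_{\Uz(B)}^{\Uz}$ into $R^i\ind_{\Dist(B)}^{\Dist(G)}$. (To your credit, you flag this step as the main obstacle.) The paper dispatches the point by citing the quantum Kempf vanishing theorem of Ryom-Hansen \cite[Theorem~5.5]{RH:2003} directly; this is a genuine result about $\Uz$-induction and is not a formal consequence of classical Kempf vanishing. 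With that citation in place, the rest of your argument is correct and matches the paper line for line.
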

\begin{proof}
Note first that $R^i\ind_{\Uz(B)}^{\Uz} k=0$ for all $i\ge 1$ by the quantum version of Kempf's Vanishing Theorem \cite[Theorem 5.5]{RH:2003}. The preceding lemma implies the following spectral sequence
\[ R^i\ind_B^G\opH^j(\Uz(B_r),k)^{(-r)}\Rightarrow \opH^{i+j}(\Uz(G_r),\ind_{\Uz(B)}^{\Uz} k)^{(-r)}. \]

On the other hand, Theorem \ref{B_r-quantumcohomology} shows that $\opH^j(\Uz(B_r),k)^{(-r)}$ is decomposed into dominant weight spaces for each $j\ge 0$. Hence, we have
\[ R^i\ind_B^G\opH^j(\Uz(B_r),k)^{(-r)}=0\]
for all $i>0$, which implies the collapse of the spectral sequence so that we have for each $j\ge 0$
\[ \ind_B^G\opH^j(\Uz(B_r),k)^{(-r)}\cong \opH^{j}(\Uz(G_r),k)^{(-r)}\]
as a $G$-module. By Remark \ref{cup-product}, these isomorphisms extend to the first isomorphism of the theorem. The second one immediately follows from Theorem \ref{B_r-quantumcohomology}.
\end{proof}

\subsection{Cohen-Macaulay quantum cohomology} We continue establishing in this section analogous results as in Sections \ref{reduced cohomology} and \ref{Cohen-Macaulayrings}. As most of the arguments here are similar to those of the aforementioned sections, we omit details of proofs below.

We begin by looking at the reduced parts of quantum cohomology rings.

\begin{proposition}
For each $r\ge 0$, there are isomorphisms of rings
\begin{align*}
\opH^\bullet(\Uz(B_r),k)_{\red} &\cong S^\bullet(x_0,\ldots,x_r)^{T_r}, \\
 \opH^\bullet(\Uz(G_r),k)_{\red} &\cong \ind_B^GS^\bullet(x_0,\ldots,x_r)^{T_r}.
\end{align*}
\end{proposition}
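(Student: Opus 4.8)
The plan is to mimic, in the quantum setting, the chain of reductions used for $\opH^\bullet(B_r,k)_{\red}$ and $\opH^\bullet(G_r,k)_{\red}$ in Section \ref{reduced cohomology}. For the first isomorphism, I would start from the algebra identification
\[ \opH^\bullet(\Uz(B_r),k)\cong\opH^\bullet(\Uz(U_r),k)^{\Uz(T_r)}\cong\left(S^\bullet(x_0,\ldots,x_r)\otimes\Lambda^\bullet(y_1,\ldots,y_r)\right)^{\Uz(T_r)} \]
established just before Theorem \ref{B_r-quantumcohomology}. Taking reduced parts and using the quantum analog of Lemma \ref{lemma of reduceB_r} — namely that $(-)_{\red}$ commutes with $(-)^{\Uz(T_r)}$, which holds because $\Uz(T_r)$ is a semisimple (hence cohomologically trivial) Hopf algebra, so the argument of Lemma \ref{lemma of reduceB_r} goes through verbatim with $\rad(A)^{\Uz(T_r)}=\rad(A^{\Uz(T_r)})=\rad(A)\cap A^{\Uz(T_r)}$ — reduces the problem to computing $\left(S^\bullet(x_0,\ldots,x_r)\otimes\Lambda^\bullet(y_1,\ldots,y_r)\right)_{\red}$. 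Since the $y_i$ are odd (square-zero) generators, the nilradical is exactly the ideal $(y_1,\ldots,y_r)$, and the quotient is $S^\bullet(x_0,\ldots,x_r)$; the $\Uz(T_r)$-action survives the quotient, giving $\opH^\bullet(\Uz(B_r),k)_{\red}\cong S^\bullet(x_0,\ldots,x_r)^{T_r}$ after the usual Frobenius-untwist identification $\Uz(T_r)//\uz^0\cong\Dist(T_r)$, under which the $T_r$-weights of $x_i$ become $p^i\alpha$.

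For the second isomorphism, I would invoke the quantum $G_r$-versus-$B_r$ comparison (the theorem stating $\opH^\bullet(\Uz(G_r),k)^{(-r)}\cong\ind_B^G\opH^\bullet(\Uz(B_r),k)^{(-r)}$) together with the quantum analog of Lemma \ref{lemma of reduceG_r}, which says $\left[\ind_B^GM\right]_{\red}\cong\ind_B^G(M_{\red})$ provided all weights of $\rad M$ are dominant. Here $M=\opH^\bullet(\Uz(B_r),k)^{(-r)}$ and $\rad M$ corresponds to the ideal generated by the $y_i$; by Theorem \ref{B_r-quantumcohomology} all weights occurring in $\opH^\bullet(\Uz(B_r),k)^{(-r)}$ are of the form $n\alpha$ with $n\ge 0$, hence dominant, so the hypothesis of Lemma \ref{lemma of reduceG_r} is satisfied (the proof of that lemma — reduction to $\opH^1(B,\rad A)=0$ via $\rad(k[G]\otimes M)=k[G]\otimes\rad M$ and Kempf vanishing — applies unchanged since $k$ is still perfect and $G=SL_2$ is reductive). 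Combining the two isomorphisms with the first part of the proposition yields
\[ \opH^\bullet(\Uz(G_r),k)_{\red}\cong\ind_B^G\left(\opH^\bullet(\Uz(B_r),k)_{\red}\right)\cong\ind_B^G\left(S^\bullet(x_0,\ldots,x_r)^{T_r}\right). \]

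The main obstacle I anticipate is purely bookkeeping rather than conceptual: one must be careful that the $\Uz^0$-module structure on $\opH^\bullet(\Uz(U_r),k)$ is carried correctly through each step (invariants, reduction, induction) and that the Frobenius-untwist correctly converts $\Uz^0$-weights divisible by $\ell$ into $\Dist(T)$-weights, matching the normalization already used in the derivation preceding Theorem \ref{B_r-quantumcohomology} where $x_i'$ has weight $p^i\alpha$. A second small point requiring care is verifying that the nilradical of $S^\bullet(x_0,\ldots,x_r)\otimes\Lambda^\bullet(y_1,\ldots,y_r)$ is precisely $(y_1,\ldots,y_r)$ and that this ideal is $\Uz(T_r)$-stable — both are immediate since each $y_i$ is a homogeneous weight vector of odd degree — but they are the facts that make the reduction clean, so I would state them explicitly. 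Everything else follows formally from the lemmas of Section \ref{reduced cohomology} transported to the quantum setting, exactly as the paper has been doing throughout the final section.
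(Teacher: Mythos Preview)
Your proposal is correct and follows essentially the same route as the paper: reduce to $\opH^\bullet(\Uz(U_r),k)^{\Uz(T_r)}$, commute $(-)_{\red}$ with the torus-invariants via (the quantum form of) Lemma~\ref{lemma of reduceB_r}, kill the exterior part, and then for the $G_r$-statement apply the comparison $\opH^\bullet(\Uz(G_r),k)^{(-r)}\cong\ind_B^G\opH^\bullet(\Uz(B_r),k)^{(-r)}$ together with Lemma~\ref{lemma of reduceG_r}. The only cosmetic difference is that the paper first passes through $\uz^0$-invariants to land in a classical $T_r$-module (so that Lemma~\ref{lemma of reduceB_r} applies verbatim), whereas you commute directly at the $\Uz(T_r)$-level using its semisimplicity; since $\uz^0$ already acts trivially on $S^\bullet(x_0,\ldots,x_r)\otimes\Lambda^\bullet(y_1,\ldots,y_r)$, these are the same computation.
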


\begin{proof}
We have 
\begin{align*}
\opH^\bullet(\Uz(B_r),k)_{\red} &\cong\left( \opH^\bullet(\Uz(U_r),k)^{\Uz(T_r)}\right)_{\red}\\
&\cong \left( (\opH^\bullet(\Uz(U_r),k)^{\uz^0})^{\Uz(T_r)//{\uz^0}}\right)_{\red}\\
&\cong \left( [S^\bullet(x_0,\ldots,x_r)\otimes\Lambda^\bullet(y_1,\ldots,y_r)]^{T_r}\right)_{\red}\\
&\cong \left( [S^\bullet(x_0,\ldots,x_r)\otimes\Lambda^\bullet(y_1,\ldots,y_r)]_{\red}\right)^{T_r}\\
&\cong S^\bullet(x_0,\ldots,x_r)^{T_r}
\end{align*}
where the fourth isomorphism is from Lemma \ref{lemma of reduceB_r}. Next, Lemma \ref{lemma of reduceG_r} gives us the second isomorphism of the proposition.
\end{proof}

Now the following theorem shows that the Frobenius-Lusztig kernels of quantum groups also have Cohen-Macaulay cohomology.

\begin{theorem}
For each $r\ge 0$, the cohomology rings $\opH^\bullet(\Uz(B_r),k)_{\red}$ and $\opH^\bullet(\Uz(G_r),k)_{\red}$ are Cohen-Macaulay.
\end{theorem}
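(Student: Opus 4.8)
The plan is to mirror precisely the strategy already carried out for the ordinary Frobenius kernels in Theorem \ref{Cohen-Macaulay for B_r and G_r}. For the Borel case, the preceding proposition identifies $\opH^\bullet(\Uz(B_r),k)_{\red}$ with $S^\bullet(x_0,\ldots,x_r)^{T_r}$. The symmetric algebra $S^\bullet(x_0,\ldots,x_r)$ is a polynomial ring, hence regular, and $T_r$ is a finite group scheme acting on it. So the Hochster--Roberts theorem \cite{HR:1974} applies verbatim: the ring of invariants of a regular ring under a (linearly reductive, or here finite diagonalizable) group scheme is Cohen-Macaulay. This disposes of the $\Uz(B_r)$-statement.

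For the $\Uz(G_r)$-case, I would first establish the quantum analogues of the three lemmas preceding Theorem \ref{Cohen-Macaulay for B_r and G_r}. The key point is that $\ind_B^G(S^\bullet(x_0,\ldots,x_r)\otimes\lambda)$, for $\lambda\in\Z\Phi$, is isomorphic to $\ind_B^G S^\bullet(x_0,\ldots,x_r)$ with a degree shift by $2\,\mathrm{ht}(\lambda)$; this follows exactly as in the lemmas above from Proposition 8.22 in \cite{Jan:2004}, decomposing $S^n$ into weight components $(a_0+a_1p+\cdots+a_rp^r)\alpha$ and invoking $\ind_B^G(S^m(\fraku^*)\otimes\lambda)\cong k[G\times^B\fraku]_{2m+2\mathrm{ht}(\lambda)}$. (The only bookkeeping difference from the $B_r$-case is the extra generator $x_0$ of weight $\alpha$ rather than $p\alpha$, which merely shifts the exponents on $p$ but changes nothing structurally.) One then obtains, as in the lemma before Theorem \ref{Cohen-Macaulay for B_r and G_r}, that $R':=\ind_B^G S^\bullet(x_0,\ldots,x_r)$ is Cohen-Macaulay: indeed $k[\fraku^{r+1}]$ is a free $S^\bullet(x_0,\ldots,x_r)$-module via the $p^i$-power maps, induction preserves direct sums so $k[G\times^B\fraku^{r+1}]$ is free over $R'$, the latter is Cohen-Macaulay by \cite{Ngo:2012}, and Cohen-Macaulayness descends along the faithfully flat inclusion $R'\hookrightarrow k[G\times^B\fraku^{r+1}]$ by \cite[Proposition 2.6(d)]{I:2008}.

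Finally, combining the second isomorphism of the preceding proposition with the good-filtration decomposition $\opH^\bullet(\Uz(B_r),k)_{\red}\cong S^\bullet(x_0,\ldots,x_r)^{T_r}$ written as a finite free module over the subring generated by the appropriate $p$-powers of the $x_i$ (exactly the analogue of Theorem \ref{B_rcohoreduce}), and applying $\ind_B^G$, one sees that $\opH^\bullet(\Uz(G_r),k)_{\red}$ is a finite free module over $R'$. A finite free module over a Cohen-Macaulay ring is Cohen-Macaulay, so we conclude. The main obstacle — really the only nontrivial point — is verifying that the weight-shift lemmas go through with the extra degree-$\ell$-rescaled generator $x_0$ of weight $\alpha$: one must check that after applying the Frobenius untwist the weights land in $\Z\Phi$ so that Proposition 8.22 of \cite{Jan:2004} applies, and that the resulting free-module basis over $R'$ is genuinely finite. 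Both are routine given the explicit description of the invariants, and everything else is a transcription of Section \ref{Cohen-Macaulayrings}.
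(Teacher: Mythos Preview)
Your proposal is correct and follows exactly the approach the paper intends: the paper's own proof is the single sentence ``The argument is similar to that given for Theorem \ref{Cohen-Macaulay for B_r and G_r},'' and you have spelled out precisely those modifications (Hochster--Roberts for the $\Uz(B_r)$-case, and the free-module-over-a-Cohen--Macaulay-$R'$ argument for the $\Uz(G_r)$-case, with the only change being the extra generator $x_0$ of weight $\alpha$). There is nothing more to add.
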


\begin{proof}
The argument is similar to that given for Theorem \ref{Cohen-Macaulay for B_r and G_r}.  
\end{proof}

\section*{Acknowledgments}

This paper is developed from a part of the author's Ph.D. thesis. The author gratefully acknowledges the guidance of his thesis advisor Daniel K. Nakano. We deeply thank Christopher M. Drupieski, who spent a lot of time and energy to read and correct our preprints. We are also grateful the conversations with David Krumm. We thank Wilberd van der Kallen and the referee for useful comments and suggestions on the manuscript. Last but not least, the author would like to thank Jon F. Carlson for teaching him MAGMA programming and giving him an account on the SLOTH machine to perform all the calculations.

\section{Appendix}\label{appendix}
\subsection{} In this section we compare the effectiveness of two programs computing the number $N_r(p,m,n)$. The first program is encoded from the algorithm in Section \ref{character multiplicity} and denote by $F_1$ in the tables below. The other one is applied Ehrhart's theory on counting integral lattice points in a polytope and denote by $F_2$.\footnote{Both codes are available on arxiv.org.} Note also that both programs are written in MAGMA code and run on the MAGMA computer algebra system.

In both tables, we fix $m=0$ and respectively consider $p=3$ in Table 1, and $p=5$ in Table 2. The last two columns show the time needed to compute $N_r(p,m,0)$ for all $0\le m\le 10$. The results show that $N_r(p,0,n)=0$ if $n$ is odd, hence only results for $m$ even are exhibited in the tables.
  
\begin{table}[ht]
\caption{$p=3$} 
\centering 
\begin{tabular}{| c | c c c c c c | c | c | } 
\hline
$r\backslash n$ & 0 & 2 & 4 & 6 & 8 & 10  & $F_1$  & $F_2$  \\ [0.5ex] 
\hline 
 2 & 1 & 3 & 5 & 7 & 9 & 11  & 0  & 1.56 (s)  \\  [1ex] 
\hline 
3 & 1 & 13 & 37 & 73 & 121 & 181  &  0  & 5.74 (s)  \\  [1ex] 
\hline
4 & 1 & 111 & 545 & 1519 & 3249 & 5951  & 0.01 (s)  & 41.04 (s)  \\  [1ex]
\hline
5 & 1 & 2065 & 17857 & 70705 & 195601 & 439201  & 0.77 (s)  & 67392.07 (s)  \\  [1ex]
\hline
\end{tabular}
\label{table:nonlin} 
\end{table}
\begin{table}[ht]
\caption{$p=5$} 
\centering 
\begin{tabular}{| c | c c c c c c | c | c | } 
\hline
 $r\backslash n$ & 0 & 2 & 4 & 6 & 8 & 10  & $F_1$  & $F_2$  \\ [0.5ex] 
\hline 
 2 & 1 & 3 & 5 & 7 & 9 & 11  & 0  & 1.86 (s)  \\  [1ex] 
\hline 
3 & 1 & 21 & 61 & 121 & 201 & 301  &  0  & 7.15 (s)  \\  [1ex] 
\hline
4 & 1 & 503 & 2505 & 7007 & 15009 & 27511  & 0.08 (s)  & 43.86 (s)  \\  [1ex]
\hline
5 & 1 & 42521 & 377561 & 1505121 & 4175201 & 9387801  & 16.78 (s)  & 69773.82 (s)  \\  [1ex]
\hline
\end{tabular}
\label{table:nonlin} 
\end{table}



\providecommand{\bysame}{\leavevmode\hbox to3em{\hrulefill}\thinspace}

\end{document}